\documentclass[11pt,reqno]{amsart}
\usepackage[utf8]{inputenc}
\usepackage[a4paper,  left=25mm, right=25mm, top=32mm, bottom=31mm, marginparwidth= 20mm]{geometry}

\usepackage{amsmath}
\usepackage{amssymb}
\usepackage{bbm}
\usepackage{blkarray}
\usepackage{changepage}
\usepackage{enumerate}
\usepackage{graphicx}
\usepackage{mathrsfs}
\usepackage{mathtools}
\usepackage{pifont}
\usepackage[dvipsnames]{xcolor}
\usepackage{imakeidx}
\usepackage{tikz-cd}
\usepackage{todonotes}
\usepackage[colorlinks=true,linkcolor=NavyBlue,urlcolor=RoyalBlue,citecolor=PineGreen,bookmarks=true,bookmarksopen=true,bookmarksopenlevel=2,unicode=true,linktocpage]{hyperref}

\newtheorem{theorem}{Theorem}[section]
\newtheorem{definition}[theorem]{Definition}
\newtheorem{lemma}[theorem]{Lemma}
\newtheorem{proposition}[theorem]{Proposition}

\newcommand\build[3]{\mathrel{\mathop{#1}\limits_{#2}^{#3}}}
\renewcommand{\d}{{\; {\rm d}}}
\newcommand{\nsd}{{\rm d}}
\renewcommand{\epsilon}{\varepsilon}
\renewcommand{\phi}{\varphi}

\newcommand{\ul}{\underline}
\def\leq{\leqslant}
\def\geq{\geqslant}

\newcommand{\sbullet}{\raisebox{0.5pt}{$\scriptstyle\bullet$}}
\newcommand{\vsbullet}{\raisebox{1pt}{\scalebox{0.7}{$\scriptstyle\bullet$}}\hspace{1pt}}

\def\b{{\sf b}}
\def\e{{\sf e}}
\def\g{{\sf g}}
\def\k{{\sf k}}
\def\x{{\sf b}^{\sf x}}

\newcommand{\lf}{\text{\sc{l}}}
\newcommand{\rf}{\text{\sc{r}}}
\newcommand{\cne}{\text{\sc{ne}}}
\newcommand{\cnw}{\text{\sc{nw}}}
\newcommand{\cse}{\text{\sc{se}}}
\newcommand{\csw}{\text{\sc{sw}}}
\newcommand{\cn}{\text{\sc{n}}}
\newcommand{\cs}{\text{\sc{s}}}
\newcommand{\ce}{\text{\sc{e}}}
\newcommand{\cw}{\text{\sc{w}}}

\def\B{{\mathcal B}}
\newcommand{\HW}{{\mathbb Z}^{N}_{\downarrow}}

\def\cont{{\sf c}}
\newcommand\leads{\!\uparrow\!}
\newcommand{\Part}{\mathscr P}
\newcommand{\Y}{\mathcal Y}

\newcommand{\C}{\mathbb{C}}
\newcommand{\R}{\mathbb{R}}
\newcommand{\Z}{\mathbb Z}

\def\End{{\rm End}}
\newcommand{\GL}{\mathrm{GL}}
\renewcommand{\S}{\mathsf S}
\newcommand{\T}{{\sf T}}
\newcommand{\Tr}{\mathrm{Tr}}
\newcommand{\U}{\mathrm U}
\renewcommand{\u}{\mathfrak u}

\newcommand{\sG}{{\sf G}}
\newcommand{\sV}{{\sf V}}
\newcommand{\sE}{{\sf E}}
\newcommand{\sCE}{\sf CE}
\newcommand{\sLE}{\sf LE}
\newcommand{\sF}{{\sf F}}

\newcommand{\bG}{{\mathbb G}}
\newcommand{\bV}{{\mathbb V}}
\newcommand{\bE}{{\mathbb E}}
\newcommand{\bF}{{\mathbb F}}

\title[Wilson loop expectations on compact surfaces]{A combinatorial formula for\\Wilson loop expectations on compact surfaces}

\author{Thierry L\'evy}
\address{Sorbonne Université, Université Paris Cité, CNRS, Laboratoire de Probabilités, Statistique et Modélisation, LPSM, F-75005 Paris, France}
\email{thierry.levy@sorbonne-universite.fr}

\date{\today}

\keywords{
Yang--Mills holonomy process, Driver--Sengupta formula, Wilson loop expectation, Schur--Weyl duality, Collins--\'Sniady integration formula, Okounkov--Vershik theory, Gelfand--Zetlin lines, spin networks, classical $6j$-symbols.
}

\subjclass[2020]{81T13, 05E10}

\makeindex

\begin{document}

\begin{abstract} 
We give an almost purely combinatorial expression for Wilson loop expectations of the Yang–Mills holonomy process with values in the unitary group on a compact oriented surface, possibly with boundary and arbitrary boundary conditions.

Our main result computes the non-normalized expectation of products of traces of holonomies along an arbitrary family of immersed curves with transverse self-intersections and no triple points. It is expressed as a sum over assignments of highest weights of the unitary group to the connected components of the complement of the curves. Each term is a product of a Gaussian exponential factor, dimensions of unitary representations, and local contributions at the intersection points given by the sine or cosine of an angle determined by the surrounding highest weights.

As an application, we obtain a new and short proof of the Makeenko–Migdal equations on arbitrary compact surfaces.
\end{abstract}

\maketitle

\centerline{\small\it This paper is dedicated to James R. Norris on the occasion of his 60th birthday.}

\bigskip

\setcounter{tocdepth}{2}
\makeatletter
\def\l@subsection{\@tocline{2}{0pt}{2.5pc}{5pc}{}}
\makeatother
\noindent\begin{adjustwidth}{26pt}{36pt}
{\small \tableofcontents}
\end{adjustwidth}


\section*{Introduction}

The present paper is devoted to the exposition and proof of a new expression of the Wilson loop expectations for the $2$-dimensional Yang--Mills holonomy process with values in the unitary group $\U(N)$ on an  oriented compact surface with or without boundary, and in the case with boundary, with arbitrary boundary conditions, constrained or free. 

The Yang--Mills holonomy process is a stochastic process, that is, a probability measure on a space of functions, that arose from the broad line of research seeking to give a rigorous mathematical basis to various pieces of quantum field theory in general, and in this particular case to the pure Euclidean $2$-dimensional Yang--Mills theory.

\subsection*{Pure Euclidean $2$-dimensional Yang--Mills theory}

Yang--Mills theory is a field theory in which one of the dynamical variables is a connection on a principal bundle over a manifold that plays the role of space-time. This connection is a mathematical model for entities called gauge bosons that mediate a certain kind of interaction between elementary particles, for instance photons for the electromagnetic interaction or gluons for the strong interaction. The nature of the interaction is reflected by the choice of the structure group of the principal bundle, which is a compact Lie group. 

The adjective {\em pure} indicates that the model contains no fermions, that is, no matter, for example, no electrons nor quarks. The reason why pure Yang--Mills theory is not trivial, at least when the structure group is not abelian, is that the gauge bosons interact with each other, in a way that is directly related to the lack of commutativity of the structure group. 

In general, Yang--Mills theory depends on a metric structure on the base manifold of the principal bundle. According to the prescriptions of special relativity, this metric structure should be Lorentzian, but the word {\em Euclidean} indicates that we prefer to work with a Riemannian analogue of the physical theory. Along the same lines, one also prefers to turn oscillatory path integrals into Gaussian-like integrals, which are much more amenable to rigorous mathematical treatment. 

Finally, {\em $2$-dimensional} means that the base manifold is taken to be a real $2$-dimensional manifold, most often the plane or a compact surface, closed or with boundary. It turns out that in the $2$-dimensional case, the only part of a Riemannian metric on the base manifold that is relevant to Yang--Mills theory is the Riemannian area. 

\subsection*{A random connection and its random holonomy} Given a compact surface $\Sigma$ with a measure of area, a compact Lie group $G$ with an invariant inner product on its Lie algebra and a principal $G$-bundle over $\Sigma$, the {\em Yang--Mills action} is a non-negative functional on the space of affine connections on the principal bundle, and the {\em Yang--Mills measure} is heuristically defined as the Boltzmann measure associated with this action, that is, the probability measure that picks a connection at random with a weight proportional to the exponential of minus its action (see \cite{LS4} and \cite{LevyMM} for more detailed introductions).

For several reasons, including the fact that a typical connection sampled under this measure will be very irregular, to the point of being a distributional differential form, and the fact that this probability measure should be invariant under the action of an infinite-dimensional symmetry group, the construction of the Yang--Mills measure on the space of connections on a principal bundle is a difficult task, and a different road was first taken to turn its heuristic description into a mathematical definition.
\medskip

This first mathematical approach to this problem can be traced to works of Migdal \cite{Migdal} and Witten \cite{Witten} and was developed on the mathematical side by Gross \cite{GrossME}, Driver \cite{DriverYM2}, Sengupta \cite{GKS, SenguptaAMS}, the author \cite{LevyAMS, LevyMHF, LevyMF,LevyMM}, and pursued, among others, by Norris and the author \cite{LevyNorris} and Sauzedde \cite{Sauzedde}.

The key to this approach is that a connection determines a holonomy, which is an integrated version of the connection that assigns, up to some choices, an element of the structure group to each loop on the base manifold, that is, in the $2$-dimensional case, to each loop on the surface. Rather than constructing the Yang--Mills measure as a probability measure on the space of connections, one constructs its push-forward by the holonomy map, which is a probability measure on the space of functions from the set of loops on the surface to the structure group. 

This approach results in the construction of a sound mathematical object called the Yang--Mills holonomy process, that can be described as  a family of group-valued random variables indexed by the set of sufficiently regular loops on a surface. This process, in the case where the structure group is a unitary group, is the main object of study of the present paper, and we will describe it in Sections \ref{sec:ymhp} and \ref{sec:DSformula}. 
\medskip

Over the last decade, a fresh impulse was given to the study of the Yang--Mills measure as a random distributional differential form, by works of Chevyrev \cite{ChevyrevT,Chevyrev2}, then of Chandra, Chevyrev, Hairer, Shen, Zhu, Zhu \cite{CCHS, ChevyrevShen, ShenZhuZhu}, and with a different approach of Dang and Nohra \cite{DangNohra}. Some of these authors also deal with the $3$-dimensional case \cite{ChevyrevShen3d}, for which the holonomy process was not yet constructed. On the general problem of constructing distributional differential forms, see \cite{Jaffard} and the references therein.

Another active direction of investigation of the $2$-dimensional Yang--Mills measure is its so-called {\em large $N$} behaviour, as the rank of the unitary group $\U(N)$ tends to infinity, and although we do not pursue this direction in the present work, we expect the results of this paper to be useful for the study of this limit, see in particular \cite{Antoine}. On this problem, let us mention contributions of Xu \cite{Xu}, Sengupta \cite{SenguptaLargeN}, the author \cite{LevyMF}, Cébron, Dahlqvist and Gabriel \cite{CDG}, Hall \cite{Hall}, Dahlqvist and Norris \cite{DahlqvistNorris}, and on closely related problems, of Maïda and the author \cite{LevyMaida}, Gilliers \cite{Gilliers}, Dahlqvist and Lemoine \cite{DahlqvistLemoine1,DahlqvistLemoine2}, Lemoine and Maïda \cite{LemoineMaida}.

Let us also mention that, also in the last decade, the mathematical study of Yang--Mills theory developed in the direction of statistical mechanics, on lattices, with different questions and methods, see \cite{Chatterjee, FLV, CPS, CRS, FV, ChevyrevGarban, Forsstrom}.

\subsection*{Wilson loop expectations} 
The Yang--Mills holonomy process on a surface $\Sigma$ associates to each loop $\ell$ on $\Sigma$ a random variable $H_{\ell}$ that takes its values in a group, which in this paper will be a unitary group $\U(N)$. The measure that underlies this process is a probability measure, which turns out to have an almost canonical scalar multiple, which is therefore a finite measure, with total mass a number called the {\em Yang--Mills partition function}, that we denote by $Z(\Sigma)$. In the case where $\Sigma$ has a boundary, this number can depend on the boundary conditions, but we leave this aside for the moment. 

The purpose of this paper is to give a new expression of the numbers
\begin{equation*}
Z(\Sigma)\, {\bf E}\big[\Tr(H_{\ell_{1}})\ldots \Tr(H_{\ell_{m}})\big],
\end{equation*}
where $\ell_{1},\ldots,\ell_{m}$ is an arbitrary nice family of loops on $\Sigma$. These numbers are called {\em non-normalized Wilson loop expectations}, and although they may look like rather special observables of the Yang--Mills holonomy process, this process enjoys properties of multiplicativity, gauge invariance and regularity that ensures that these numbers suffice to characterize its distribution completely. 

From the construction of the holonomy process mentioned in the previous paragraph, and its description by the Driver--Sengupta formula (see Section \ref{sec:DS}), one gets an expression of any Wilson loop expectation as an integral over a finite product of copies of the unitary group, of a product of traces and heat kernels on the unitary group evaluated at various words in the integration variables. 
\medskip

To put it in one sentence, we systematically apply the tools harmonic analysis on the unitary groups to this integral and turn it into an infinite sum, over what we call {\em highest weight configurations} (see Section \ref{sec:CHW}), which are assignments of a highest weight of $\U(N)$ to each connected component of the complement of the range of the family of loops under consideration. In the expression that we obtain (see Theorem \ref{thm:main}), the contribution of each highest weight configuration is the product of 
\begin{enumerate}[\indent \sbullet]
\item an exponential prefactor that reflects the presence of the heat kernels in the Driver--Sengupta formula,
\item a product of dimensions of the representations of the unitary group parametrized by the highest weights of the configuration, raised to powers which depend on the topological configuration of the family of loops on the surface,
\item a product of sines or cosines of angles that depend on the structure of the configuration of highest weights around each point of self-intersection of the family of loops.
\end{enumerate}
This last element, the product of sines and cosines of angles, is perhaps the most characteristic aspect of our formula. These angles arise in the course of the proof from the geometry of irreducible representations of symmetric groups, in Section \ref{sec:detscal}. Some of them are irrational numbers, but we prove that their product is rational (see Proposition \ref{prop:rational}).
\medskip

Although I was not aware of this as I undertook the present work, the fact that an expression of Wilson loop expectations such as that given by Theorem \ref{thm:main} should exist was, in its principle, explained by Witten in one of his celebrated papers on $2$-dimensional Yang--Mills theory \cite{Witten}. In this seemingly inexhaustible paper, this explanation is the subject of Section 2.5, titled `Comparison to IRF models', where IRF stands for {\em interaction round faces}. Neither the details of the computation nor the final formula appear in Witten's paper, and the present work can be seen, in retrospect, as filling in these gaps. 

In this section of his paper, Witten also indicates that some parts of the computation should be closely related to the determination of classical $6j$-symbols. This is a direction that we will not pursue in this paper, but that is certainly worth some further investigation. 

\subsection*{Structure of the paper}

After this introduction, this paper consists of eight sections. The first section contains the statement of the main result, Theorem \ref{thm:main}, after the necessary preliminaries. This section is numbered~S in order to spare the numbers $1$ to $7$ for the next seven sections, which present, in seven steps, the computation that constitutes the proof of the main theorem. 

We tried to start each of these sections with a presentation of the tools that we use in it, to continue with a preparation of the application of these tools in the context of the computation, and to conclude with an intermediary result, that is stated, and proved. 

A summary of each step of the proof that is more detailed than the one that we could give at this stage can be found in Section \ref{sec:structure}, after the statement of the main result.

The last section contains several complements to the main theorem, notably a proof of the rationality of the product of sines and cosines mentioned above, and a proof of the fact that our main result implies the Makeenko--Migdal equations with little effort. 
\medskip

For the convenience of the reader, we provide an index after the bibliography summarizing the notations used throughout the paper. 

\subsection*{Acknowledgements} The main result of this paper was first presented at a conference held at the Center for Mathematical Sciences in Cambridge, in honour of James R. Norris's 60th birthday. The conference, originally planned for 2021, was postponed by the Covid-19 pandemic and finally took place in September 2022. This paper was meant as a small birthday dedication to James, but the writing took longer than expected—so we hope it can now be enjoyed as a gift for his 65th birthday instead.
\medskip

Another effect of the delay with which this paper was written after the result was presented is that I could benefit from several stimulating and helpful conversations, notably with Antoine Dahlqvist, whom I warmly thank for this, as well as for his help in the proof of Proposition \ref{prop:rational}. Antoine also recently completed a related work, that will be published at the same time as this one \cite{Antoine}.  


\setcounter{section}{18}
\renewcommand*{\thesection}{\Alph{section}}

\section{Statement of the main result}

\subsection{The Yang--Mills holonomy process} \label{sec:ymhp} 

The goal of this paper is to establish a formula for {\em Wilson loop expectations}, which are the $n$-point functions of the {\em Yang--Mills holonomy process}, a group-valued stochastic process indexed by loops on a manifold. The only situation so far where this stochastic process can be mathematically defined is when the manifold is an oriented compact surface. In this case, two objects are needed to construct the Yang--Mills holonomy process. 

\begin{enumerate}[\sbullet]
\item A compact connected Lie group, the Lie algebra of which is endowed with an inner product invariant under the adjoint action. In this paper, we fix an integer $N\geq 1$ and take the Lie group to be the unitary group $\U(N)=\{U\in M_{N}(\C) : UU^{*}=I_{N}\}$. We endow its Lie algebra $\u(N)=\{X\in M_{N}(\C) : X+X^{*}=0\}$ with the invariant inner product $\langle X,Y\rangle=N\Tr(X^{*}Y)$, where $\Tr$ denotes the usual trace of a matrix, in the sense that $\Tr(I_{N})=N$. 
\item A smooth oriented connected compact surface, that we fix and denote by $\Sigma$, endowed with a measure of area, that is, a finite measure on the Borel $\sigma$-field of $\Sigma$ that is equivalent to the Lebesgue measure in every coordinate chart. We denote by~$|F|$ the area of a measurable subset $F$ of $\Sigma$. 

\index{Sigma@$\Sigma$, compact surface}
\index{ar@ $\lvert \ \cdot\  \rvert$, measure of area}

The surface~$\Sigma$ can be closed, or have a boundary, and we will deal with both cases at the same time. If $\Sigma$ has a boundary, then it is possible to impose, at some of its boundary components, boundary conditions for the holonomy process, each boundary condition being specified by a conjugacy class of the group $\U(N)$.

If $\Sigma$ has a boundary, we split the set of its boundary components into the subset of those that we called {\em constrained} and to which we attach a boundary condition, and the complementary subset, of {\em free} boundary components. 

We will assume that $\Sigma$ has $\k$ constrained boundary components $C_{1},\ldots,C_{\k}$, on which are imposed the boundary conditions specified by the respective conjugacy classes of $\k$ elements $w_{1},\ldots,w_{\k}$ of $\U(N)$.
We will denote collectively by $\ul{w}=(w_{1},\ldots,w_{\k})$ the set of boundary conditions. 

\index{wC @ $\ul{w}$, boundary conditions}
\index{k @ $\k$, number of constrained b.c.\ of $\Sigma$}
\end{enumerate}
\smallskip

Let $S^{1}$ denote a smooth oriented $1$-manifold diffeomorphic to a circle and let $(S^{1},\vsbullet)$ denote the same manifold with a distinguished point. For us, and although a weaker regularity could be allowed, a {\em based loop} on $\Sigma$ will be a piecewise smooth immersion of $(S^{1},\vsbullet)$ in $\Sigma$. 

As explained for instance in \cite{LevyMM}, to the Lie group $\U(N)$ and the invariant inner product on its Lie algebra $\u(N)$, the surface $\Sigma$ with its measure of area, and possibly boundary conditions along some boundary components of $\Sigma$, is associated the distribution (in the probabilistic sense of the word) of a $\U(N)$-valued stochastic process indexed by the set of based loops on $\Sigma$, called the Yang--Mills holonomy process. This means that there exists a probability space $(\Omega,\mathscr F,\mathbf P)$ and, for every based loop $l$, a $\U(N)$-valued random variable $H_{l}$ on this probability space, such that the joint distribution of any finite collection $(H_{l_{1}},\ldots,H_{l_{n}})$ of these random variables is a certain prescribed probability measure on $\U(N)^{n}$, which depends on the inner product on the Lie algebra $\u(N)$, the surface, the measure of area, the boundary conditions, and of course on the based loops $l_{1},\ldots,l_{n}$ (see Section \ref{sec:DSformula}).

\index{H@$H_{l}$, Yang--Mills process}

An important property of the holonomy process is that if two based loops $l$ and $l'$ differ by a positive reparametrisation, a change of basepoint, or both, that is, if they differ by pre-composition by an orientation-preserving diffeomorphism of $S^{1}$ that does not necessarily preserve the distinguished point, then the $\U(N)$-valued random variables $H_{l}$ and $H_{l'}$ belong almost surely to the same conjugacy class. In particular, the complex-valued random variables $\Tr(H_{l})$ and $\Tr(H_{l'})$ are almost surely equal. For this reason, our main result involves unbased loops, more precisely nice finite collections of unbased loops, that we will now describe.

\subsection{Loop configurations and Wilson loop expectations} \label{sec:wle}
By a  {\em loop configuration} on $\Sigma$, we mean a smooth immersion $\ul\ell=(\ell_{1},\ldots,\ell_{m}):(S^{1})^{\sqcup m} \to \Sigma$ of the disjoint union of a certain number $m\geq 1$ of copies of $S^{1}$ into $\Sigma$, that has no triple points nor multiple points of higher order, and that is transverse to itself at each double point, if any. If $\Sigma$ has a boundary, we assume that the range of $\ul{\ell}$ does not meet $\partial\Sigma$. 

\index{l@$\ul\ell$, loop configuration}

Around any point of its range, a loop configuration is, up to diffeomorphism, a line through this point, or two lines crossing at this point. Because $\Sigma$ is compact, the second possibility occurs at finitely many points, called the {\em double points} of the loop configuration. A connected component of the range of a loop configuration that contains no double point is diffeomorphic to a circle, and is equal to the range of one of the immersions $\ell_{1},\ldots,\ell_{m}$, that is in fact an embedding. We will call such a connected component an {\em isolated embedded loop}.

Two loop configurations differ by pre-composition by a smooth orientation-preserving diffeomorphism of $(S^{1})^{\sqcup m}$ if and only if they have the same range with the same orientation. This defines a natural equivalence relation on loop configurations, and all the quantities that we will consider are invariant under the replacement of a loop configuration by an equivalent one.
\smallskip

Our main result gives, for every loop configuration $\ul\ell=(\ell_{1},\ldots,\ell_{m})$ on $\Sigma$, a formula for the {\em non-normalized Wilson loop expectation}
\begin{equation}\label{eq:WLE}
Z(\Sigma)\,  {\bf E}\big[\Tr(H_{\ell_{1}})\ldots \Tr(H_{\ell_{m}})\big] \ \, \text{ or } \ \, Z(\Sigma,\ul{w})\,  {\bf E}\big[\Tr(H_{\ell_{1}})\ldots \Tr(H_{\ell_{m}})\big],
\end{equation}
depending on whether $\Sigma$ is closed or has a boundary, where ${\bf E}$ is the expectation with respect to the probability ${\bf P}$ underlying the Yang--Mills holonomy process, and $Z(\Sigma)$ or  $Z(\Sigma,\ul{w})$ is the partition function of the 2-dimensional Yang--Mills measure on $\Sigma$,  is a positive number that will be defined precisely later, see \eqref{eq:defZ}, \eqref{eq:Z=1}, and \eqref{eq:defZwb}. 

It may seem that expectations of the form \eqref{eq:WLE} are only a small part of those that one could form with the holonomy process. Indeed, we restrict ourselves to very regular finite collections of loops, and we do not consider the expectation of products of traces of {\em products} of random variables. However, the holonomy process enjoys properties of multiplicativity, gauge invariance, and continuity (see \cite{LevyAMS, LevyMHF}) which together imply that the Wilson loop expectations of the form \eqref{eq:WLE}, for all loop configurations as defined above, characterize completely its distribution. 

The expression that we give for \eqref{eq:WLE} takes the form of a sum, indexed by assignments of an irreducible representation of $\U(N)$ to each face of the graph drawn on $\Sigma$ by the loop configuration. In order to state the result, we need to discuss these objects and introduce some notation.

\subsection{The graph associated to a loop configuration}\label{sec:graphskein}
Let us consider a loop configuration $\ul\ell=(\ell_{1},\ldots,\ell_{m})$ on $\Sigma$. This loop configuration gives rise to a graph $\sG$ embedded in $\Sigma$, of which the sets $\sV$ of vertices, $\sE$ of edges and $\sF$ of faces are defined as follows. 

\begin{enumerate}[\noindent {\raisebox{0.5pt}{$\scriptstyle\bullet$}}]
\item $\sV$ is 
the finite subset of $\Sigma$ consisting of the double points of the loop configuration $\ul\ell$. 
\item $\sE$ is the set of restrictions of $\ul\ell$ to the closures of the connected components of $(S^{1})^{\sqcup m} \setminus \ul\ell^{-1}(\sV)$.
\item $\sF$ is the set of connected components of $\Sigma\setminus \ul\ell\big((S^{1})^{\sqcup m}\big)$. 
\end{enumerate}

\index{V@$\sV$, vertices of $\sG$}
\index{E@$\sE$, edges of $\sG$}
\index{F@$\sF$, faces of $\sG$}
\index{G@$\sG$, graph}

There is little to say about vertices. Let us discuss edges. Each circle of $(S^{1})^{\sqcup m}$ can contain no point of $\ul\ell^{-1}(\sV)$, or one, or several, and this gives rise to three types of edges. A circle that contains no point of $\ul\ell^{-1}(\sV)$ parametrises an isolated embedded loop, that is indeed an edge according to our definition, and that we call a {\em circular edge}. It is an edge without an endpoint. A circle that contains one preimage of a vertex yields an edge that is parametrised by a circle, but contains a vertex. We think of such an edge as being parametrised by a segment and having identical endpoints, and call it a {\em closed linear edge}. Finally, a circle that contains several preimages of vertices gives rise to edges that are embeddings of segments, and that we call {\em open linear edges}.

Let us introduce the notation
\[\sLE = \{e \in \sE : e \text{ linear edge}\} \ \text{ and } \  \sCE = \{e \in \sE : e \text{ circular edge}\},\]
the distinction between closed and open linear edges being less essential.

\index{LE@$\sLE$, linear edges of $\sG$}
\index{CE@$\sCE$, circular edges of $\sG$}

Circular edges are somewhat unusual in the context of a graph, and they will sometimes need a special treatment, but this will not create any difficulty. 

\index{Sigmabar@$\bar \Sigma$, surface $\Sigma$ blown up along $\sG$}
\index{Fbar@$\bar F$, compactification of a face}

Let us now discuss faces. Our definition of faces is simple, but as we define them, they are not compact topological spaces. For several reasons, it will be convenient to have an alternative description of the faces of $\sG$ as compact surfaces with boundary. Intuitively, these surfaces are the connected component of the result of cutting (in the most concrete sense) the surface of $\Sigma$ along the arcs of the loop configuration. They can be defined rigorously as arising from a real oriented blowup of $\Sigma$ along~$\sG$. This operation replaces each point $m$ of $\Sigma$ by the set of germs of pairs $(m,U)$, where $U$ is a connected component of the intersection of a neighbourhood of $m$ with the complement of the range of $\sG$, such that the closure of $U$ contains $m$. Concretely, one takes one copy of each point of $\Sigma$ that is not on the range of $\sG$, one replaces each point that lies on a strand of a loop by two points, one attached to each side of the loop, and each point that lies at the crossing of two loops by four points, one attached to each corner defined by the crossing. In this way, one produces a surface with boundary $\bar \Sigma$, together with a projection map $\bar \Sigma \to \Sigma$. For each face $F$ of $\sG$, there is a unique connected component $\bar F$ of $\bar \Sigma$ of which the image by the projection map contains $F$. This component $\bar F$ is the compact surface that we were looking for as being attached to the face $F$. There is no restriction on the topology of $\bar F$ which can be, depending on $\Sigma$ and $\ul\ell$, any orientable compact surface with boundary. 

The reason for this relatively complicated construction is that in general, $\bar F$ is not homeomorphic to the closure of $F$ in $\Sigma$, see Figure~\ref{fig:surface} for an example. Nevertheless, in a non-canonical way, $\bar F$ is always diffeomorphic to a subset of~$\Sigma$, namely to the connected component contained in $F$ of the complement in $\Sigma$ of a small open tubular neighbourhood of the range of the loop configuration $\ul\ell$ (this sentence needs to be read backwards). 

If $\Sigma$ is closed, then the boundary of $\bar F$ arises from the operation of blowing up. However, if $\Sigma$ has a boundary, $F$ may contain certain components of $\partial \Sigma$, which are also components of $\partial \bar F$ : we call such boundary components {\em internal}, and the others, arising from the blow up {\em external}.

To each face $F$ of $\sG$, we attach the following integers:
\begin{enumerate}[\indent \sbullet]
\item $\e_{F}$, the Euler characteristic of $\bar F$,
\item $\b_{F}$, the number of boundary components of $\bar F$,
\item $\x_{F}$, the number of external boundary components of $\bar F$, equal to $\b_{F}$ if $\Sigma$ is closed.
\end{enumerate}

\index{bF@$\b_{F}$, number of boundary components of $\bar F$}
\index{eF@$\e_{F}$, Euler characteristic of $\bar F$}
\index{bxF@$\x_{F}$, number of external b.c.\ of $\bar F$}

\begin{figure}[h!]
\begin{center}
\includegraphics{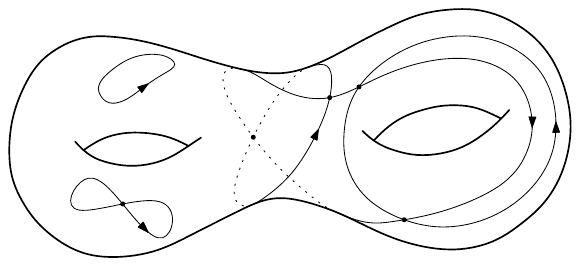}
\caption{\label{fig:surface}\small This loop configuration on a surface of genus $2$ contains five loops, one of which is an isolated embedded loop. It produces a graph with 5 vertices, 11 edges, one of which is circular, and 8 faces. The face $F$ containing the left handle of the surface has a boundary with $\b_{F}=3$ connected components and Euler characteristic $\e_{F}=-3$. However, the closure of $F$ in~$\Sigma$ is not homeomorphic to a torus with $3$ holes. Instead, it is homeomorphic to a torus with $3$ holes of which two distinct points of the same boundary component have been identified, to produce the vertex located on the $8$-shaped loop.}
\end{center}
\end{figure}

\subsection{Configurations of highest weights}\label{sec:CHW}

Our main result expresses the non-normalised Wilson loop expectation~\eqref{eq:WLE} as a sum over {\em configurations of highest weights} on the faces of the graph~$\sG$.

By a {\em highest weight}, we mean a non-increasing $N$-tuple of elements of~$\Z$. We denote the set of highest weights by $\HW$. 

To each highest weight $\lambda=(\lambda_{1}\geq\ldots \geq \lambda_{N})$ we associate two non-negative integers,
its {\em dimension}
\begin{equation}\label{eq:Weyldim}
d_{\lambda}=\prod_{1\leq i < j \leq N} \frac{\lambda_{i}-\lambda_{j}+j-i}{j-i}
\end{equation}
and its {\em quadratic Casimir number}
\begin{equation}\label{eq:casimir}
c_{\lambda}=\sum_{1\leq i \leq N} \lambda_{i}^{2}+\sum_{1\leq i < j \leq N}(\lambda_{i}-\lambda_{j})=\sum_{i=1}^{N} \lambda_{i}^{2}+(N-2i+1)\lambda_{i}.
\end{equation}
These numbers arise from the fact that the set of highest weights are in one-to-one correspondence with the set of isomorphism classes of irreducible representations of the group $\U(N)$. The number $d_{\lambda}$ is the dimension of the representation parametrised by the highest weight $\lambda$, and~\eqref{eq:Weyldim} is called the Weyl dimension formula. The character of the representation parametrised by $\lambda$ is a complex-valued function on $\U(N)$ that is an eigenfunction, with eigenvalue $-\frac{1}{N}c_{\lambda}$, of the Laplace--Beltrami operator on $\U(N)$ associated to the bi-invariant Riemannian metric determined by the invariant inner product on $\u(N)$. We will discuss characters of unitary representations in more detail later, see Section \ref{sec:SFFS}.
\medskip

\index{ZN@$\HW$, set of highest weights of $\U(N)$}
\index{dlambda@$d_{\lambda}$, dimension of the irrep $\lambda$}
\index{clambda@$c_{\lambda}$, quadratic Casimir number of the irrep $\lambda$}
\index{lzambda@$\lambda$, irrep of $\U(N)$}

A {\em configuration of highest weights} on the faces of the graph~$\sG$ is a map $\Lambda:\sF\to \HW$ from the set of faces of $\sG$ to the set of highest weights. We will always denote by $\lambda_{F}$ the highest weight associated to a face $F$, instead of $\Lambda(F)$. 
\index{Lzambda@$\Lambda$, configuration of highest weights}

Only certain special configurations of highest weights will appear in our main result. To explain which ones, and how they contribute, we need to introduce some more notation.

Consider $\lambda,\mu\in \HW$. We say that $\mu$ {\em extends} $\lambda$  if 
\begin{equation}\label{eq:extends}
\lambda=(\lambda_{1},\ldots,\lambda_{i},\ldots,\lambda_{N}) \ \text{ and } \ 
\mu=(\lambda_{1},\ldots,\lambda_{i}+1,\ldots,\lambda_{N})
\end{equation}
for some $i\in \{1,\ldots,N\}$. In this case, we write $\lambda\leads\mu$ and we define 
\begin{equation} \label{eq:defcont}
\cont(\mu/\lambda)=\mu_{i}-i.
\end{equation}
When the components of $\lambda$ and $\mu$ are non-negative, and in the context of the representations of symmetric groups, this integer is known as the {\em content} of the only box of the skew Young diagram $\mu/\lambda$, that is, the content of the unique box of the Young diagram of shape $\mu$ that is not in the Young diagram of shape $\lambda$. 

\index{lzambdamu@$\lambda\leads\mu$, $\mu$ extends $\lambda$}
\index{cmulambda@$\cont(\mu/\lambda)$, content of $\mu/\lambda$}

An elementary observation that will play an important role for us is the following.

\begin{lemma}\label{lem:contpasegal}
If three highest weights $\lambda, \mu,\xi$ are such that $\lambda\leads\mu\leads\xi$, then $\cont(\xi/ \mu)\neq \cont(\mu/ \lambda)$. 
\end{lemma}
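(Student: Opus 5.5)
Write $\mu$ as $\lambda$ with $1$ added to the $i$-th component, and $\xi$ as $\mu$ with $1$ added to the $j$-th component, for some $i,j\in\{1,\ldots,N\}$. By \eqref{eq:defcont}, we have $\cont(\mu/\lambda)=\mu_i-i=\lambda_i+1-i$ and $\cont(\xi/\mu)=\xi_j-j$. The plan is to assume that these two contents are equal and to split according to whether $j=i$ or $j\neq i$.

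If $j=i$, then $\xi_i=\mu_i+1$. Hence $\cont(\xi/\mu)=\mu_i+1-i=\cont(\mu/\lambda)+1$, and the two contents differ.

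If $j\neq i$, then $\xi_j=\mu_j+1$, and the equality of contents becomes $\mu_j+1-j=\mu_i-i$, that is,
\[
\mu_j-\mu_i = j-i-1 .
\]
\begin{enumerate}[\indent \sbullet]
\item If $j>i$, then $\mu_j\leq\mu_i$ because $\mu$ is non-increasing. The left-hand side is therefore $\leq 0$, so the equality forces $j=i+1$ and $\mu_{i+1}=\mu_i$. But $\lambda\leads\mu$ gives $\lambda_{i+1}=\mu_{i+1}=\mu_i=\lambda_i+1$, which contradicts $\lambda_i\geq\lambda_{i+1}$.
\item If $j<i$, the right-hand side $j-i-1$ is $\leq -2$, so $\mu_i\geq\mu_j+2>\mu_j$. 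This contradicts $\mu_j\geq\mu_i$, which holds because $\mu$ is non-increasing.
\end{enumerate}

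In every case the two contents are different, which proves the lemma. The only step needing any care is the subcase $j=i+1$, where the monotonicity of $\lambda$, and not of $\mu$, gives the contradiction. Everything else is bookkeeping with the definitions \eqref{eq:extends} and \eqref{eq:defcont}.
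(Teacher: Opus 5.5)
Your proof is correct and follows essentially the same route as the paper: a case split on the relative position of the indices $i$ and $j$, using that $\mu$ and $\lambda$ are non-increasing. The paper writes it as two direct strict inequalities rather than by contradiction, which also shows that $\cont(\xi/\mu)>\cont(\mu/\lambda)$ exactly when $j\leq i$, a fact it uses later for the sign of $\cos\theta^{\Lambda}_{v}$.
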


\begin{proof}
If $\mu=(\lambda_{1},\ldots,\lambda_{i-1},\lambda_{i}+1,\lambda_{i+1},\ldots,\lambda_{N})$ and $\xi=(\mu_{1},\ldots,\mu_{j-1},\mu_{j}+1,\mu_{j+1},\ldots,\mu_{N})$, then either $j\leq i$ and $\cont(\xi/\mu)=\mu_{j}-j\geq \mu_{i}-i=\lambda_{i}-i+1>\lambda_{i}-i=\cont(\mu/\lambda)$, or $j\geq i+1$, in which case $\cont(\xi/\mu)=\mu_{j}-j\leq \mu_{i+1}-i-1=\lambda_{i+1}-i-1<\lambda_{i}-i=\cont(\mu/\lambda)$.
\end{proof}

For each edge $e$ of $\sG$, let us denote by $\lf_{e}$ (resp. $\rf_{e}$) the face of $\sG$ located immediately on the left (resp. on the right) of $e$. This notion makes sense because each edge is oriented, as well as the surface $\Sigma$. 

\index{Le@$\lf_{e}$, the face on the left of the edge $e$}
\index{Re@$\rf_{e}$, the face on the right of the edge $e$}

Recall from Section \ref{sec:ymhp}  that a free boundary component of $\Sigma$ is one at which no boundary condition is imposed on the holonomy process. 

\begin{definition} \label{def:wellbalanced}
We say that a configuration of highest weights $\Lambda:\sF\to \HW$ is {\em balanced} if for each 
edge $e\in \sE$, the highest weight $\lambda_{\rf_{e}}$ extends $\lambda_{\lf_{e}}$:
\[\forall e\in \sE, \ \ \lambda_{\lf_{e}}\leads \lambda_{\rf_{e}}.\]
We say that the configuration {\em vanishes at the free boundary} if for every face $F\in \sF$ that contains a free boundary component of $\Sigma$, 
\[\lambda_{F}=(0,\ldots,0).\]
\end{definition}

We denote by $\B$ the set of balanced configurations of highest weights that vanish at the free boundary, the dependence of this set on $\Lambda$ and $N$ being implicit.
Note that it can happen that for some edge $e$ one has $\lf_{e}=\rf_{e}$, for example if $\Sigma$ is a torus and $\ul\ell$ consists in a single meridian of $\Sigma$. In this case, no configuration of highest weights is balanced. More generally, we shall prove that if the homology class of $\ul\ell$ in $H_{1}(\Sigma)$ does not belong to the subgroup generated by the classes of the constrained boundary components, then there does not exist any balanced configuration of highest weights that vanishes at the free boundary (see Proposition \ref{prop:homcond} and its proof).

\index{B@$\B$, balanced configurations of highest weights}

The most characteristic aspect of our main result is the appearance of the cosine or sine of an angle attached to each vertex of $\sG$ and which depends on the configuration $\Lambda$. Let us explain how this angle is defined.

Let us consider $\Lambda \in \B$ and a vertex $v\in \sV$. Let us label the four faces of $\sG$ adjacent to~$v$ as $\cn_{v}, \cw_{v}, \cs_{v}, \ce_{v}$, in reference to the cardinal points,  in the unique way consistent with the orientation of $\Sigma$ and such that $\cn_{v}$ is the face located between the two outgoing edges at $v$ (see Figure \ref{fig:cardinal} below). Note that these faces need not be pairwise distinct.

\index{Nv@$\cn_{v}, \cw_{v}, \cs_{v}, \ce_{v}$,  faces around the vertex $v$}

Since $\Lambda$ is balanced, we have $\lambda_{\cw_{v}}\leads \lambda_{\cn_{v}}\leads \lambda_{\ce_{v}}$. We define $\theta^{\Lambda}_{v}$ as the unique element of the real interval $[0,\pi]$ such that
\begin{equation}\label{eq:defthetavL}
\cos \theta^{\Lambda}_{v}=\frac{1}{\cont(\lambda_{\ce_{v}}/ \lambda_{\cn_{v}})-\cont(\lambda_{\cn_{v}}/ \lambda_{\cw_{v}})},
\end{equation}
the denominator being non zero according to Lemma \ref{lem:contpasegal}.
In the case where all highest weights have non-negative components, and are identified with integer partitions, the absolute value of the denominator is the length of the hook joining the two boxes of the skew diagram $\lambda_{\ce_{v}}/\lambda_{\cw_{v}}$. Moreover, the sign of the denominator is positive if and only if the box $\lambda_{\ce_{v}}/ \lambda_{\cn_{v}}$
is above or on the right of $\lambda_{\cn_{v}}/ \lambda_{\cw_{v}}$, in the graphical representation adopted in Figure \ref{fig:cardinal}. In more intrinsic terms, let $i$ and $j$ be the indices such that $(\lambda_{\cn_{v}})_{i}=(\lambda_{\cw_{v}})_{i}+1$ and $(\lambda_{\ce_{v}})_{j}=(\lambda_{\cn_{v}})_{j}+1$. Then $\cos \theta^{\Lambda}_{v}$ is positive if and only if $j\leq i$.
  
\index{thetav@$\theta^{\Lambda}_{v}$,  angle associated to $\Lambda$ around $v$} 
 
The configuration $\Lambda$ still being fixed, let us say that 
\begin{equation}\label{eq:deftype}
\text{a vertex } v\in \sV \text{ is {\em of type $1$} if } \lambda_{\cn_{v}}=\lambda_{\cs_{v}}, \text{ and {\em of type $2$} if } \lambda_{\cn_{v}}\neq\lambda_{\cs_{v}}.
\end{equation}
We denote respectively by $\sV^{\Lambda}_{1}$ and $\sV^{\Lambda}_{2}$ the sets of vertices of type $1$ and of type $2$.

\begin{figure}[h!]
\begin{center}
\includegraphics{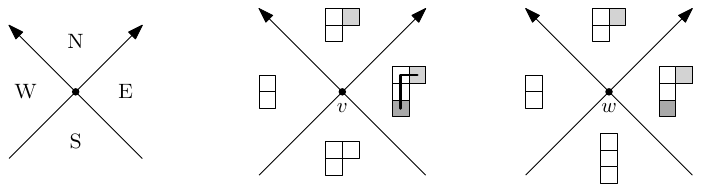}
\caption{\label{fig:cardinal}\small On this picture, highest weights have non-negative components and are represented by Young diagrams, the lengths of the rows corresponding to the non-zero components of the highest weights. For instance, $\lambda_{{\ce}_{v}}=(2,1,1,0,\ldots,0)$.
The vertex $v$ is of type 1, and  $\cos \theta^{\Lambda}_{v}=-\frac{1}{3}$. The vertex $w$ is of type~2, and $\theta^{\Lambda}_{w}=\theta^{\Lambda}_{v}$.}
\end{center}
\end{figure}

\index{Va@$\sV^{\Lambda}_{1},\sV^{\Lambda}_{2},$, vertices of type $1$, of type $2$} 

\subsection{Main result}

We can finally state our main result, using the notation introduced so far. 

\begin{theorem}\label{thm:main}
Consider the Yang--Mills holonomy process on a compact surface $\Sigma$ with structure group $\U(N)$ and invariant scalar product $\langle X,Y\rangle=N\Tr(X^{*}Y)$ on $\u(N)$. The non-normalised Wilson loop expectation associated to a loop configuration $\ul\ell=(\ell_{1},\ldots,\ell_{m})$ on $\Sigma$ is given by
\begin{align*}
\phantomsection
\label{eq:main}
\tag{\raisebox{-0.5pt}{\text{\ding{72}}}} 
&Z(\Sigma)\, {\bf E}\Big[\prod_{j=1}^{m} \Tr(H_{\ell_{j}})\Big]\\
&\hspace{2cm}=\sum_{\Lambda\in \B} e^{\!-\frac{1}{2N}\! \underset{F\in \sF}{\sum}  |F| c_{\lambda_{F}}}\!
\prod_{F\in \sF} (d_{\lambda_{F}})^{\e_{F}}
\prod_{v\in \sV} (d_{\lambda_{\cn_{v}}}d_{\lambda_{\cs_{v}}})^{-\frac{1}{2}}
\prod_{v \in \sV^{\Lambda}_{1}} \cos \theta^{\Lambda}_{v}\prod_{v \in \sV^{\Lambda}_{2}} \sin \theta^{\Lambda}_{v}.
\end{align*}

If $\Sigma$ has a boundary, and if boundary conditions $w_{1},\ldots,w_{\k}$ are imposed on boundary components that are respectively contained in the faces $F_{1},\ldots,F_{\k}$ of the graph determined by the loop configuration $\ul\ell$, then
\begin{align*}
\phantomsection
\label{eq:main2}
&Z(\Sigma,\ul{w})\, {\bf E}\Big[\prod_{j=1}^{m} \Tr(H_{\ell_{j}})\Big] 
\tag{\raisebox{-0.5pt}{\text{\ding{73}}}} 
\\
&\hspace{0.5cm}=\sum_{\Lambda\in \B} e^{\!-\frac{1}{2N}\! \underset{F\in \sF}{\sum}  |F| c_{\lambda_{F}}}\!
\prod_{i=1}^{\k} s_{\lambda_{F_{i}}}(w_{i})
\prod_{F\in \sF} (d_{\lambda_{F}})^{\e_{F}}
\prod_{v\in \sV} (d_{\lambda_{\cn_{v}}}d_{\lambda_{\cs_{v}}})^{-\frac{1}{2}}
\prod_{v \in \sV^{\Lambda}_{1}} \cos \theta^{\Lambda}_{v}\prod_{v \in \sV^{\Lambda}_{2}} \sin \theta^{\Lambda}_{v}.
\end{align*}
\end{theorem}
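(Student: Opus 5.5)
The plan is to start from the Driver--Sengupta formula for the graph $\sG$ and transform it into the sum over $\B$ by harmonic analysis on $\U(N)$, in the spirit of Witten's ``interaction round faces'' picture.

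\textbf{Step 1: the Driver--Sengupta integral and its character expansion.} Choose an orientation and a basepoint for each face boundary, and write the non-normalised expectation as an integral over $\U(N)^{\sLE\sqcup\sCE}$, together with auxiliary handle variables for the topology of each $\bar F$. The integrand is a product of three kinds of factors:
\begin{enumerate}[\indent \sbullet]
\item heat kernels $p_{|F|}$ evaluated at the boundary words of the faces;
\item the product of traces $\prod_j \Tr(g_{\ell_j})$;
\item characters $\chi_{\lambda}(w_i)$ at constrained boundaries, or delta functions at free boundaries.
\end{enumerate}
Expand each heat kernel as $\sum_{\lambda} e^{-\frac{|F|}{2N}c_\lambda} d_\lambda \chi_\lambda$. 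Integrate out the handle variables of each face using the Frobenius--Schur identities $\int \chi_\lambda(xaya^{-1}b)\,da=\chi_\lambda(xb)\chi_\lambda(y)/d_\lambda$ and their commutator analogue. This leaves a factor $d_{\lambda_F}^{\e_F}$ times a character of the boundary word of $\bar F$, or a Schur function $s_{\lambda_F}(w_i)$ for constrained boundaries. Free boundaries force $\lambda_F=0$.

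\textbf{Step 2: reduction to a tensor contraction and the balancing condition.} The traces along the loops are matrix coefficients of the fundamental representation on each edge. Each edge variable $g_e$ therefore appears in $\chi_{\lambda_{\lf_e}}$, in $\overline{\chi_{\lambda_{\rf_e}}}$ (with the appropriate orientation convention), and in one copy of the standard representation. Integrating $g_e$ against the Haar measure projects onto $\U(N)$-invariants of $V_{\lambda_{\lf_e}}\otimes \C^N\otimes V_{\lambda_{\rf_e}}^*$. By the Pieri rule this space is nonzero exactly when $\lambda_{\lf_e}\leads\lambda_{\rf_e}$, and it is then one-dimensional. This yields the balanced condition. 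It also reduces each term to a contraction, at each vertex, of four normalised intertwiners $V_\lambda\otimes\C^N\to V_\mu$. This is a recoupling coefficient, in effect a $6j$-type symbol with two fundamental legs.

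To compute it concretely, I would use Schur--Weyl duality after twisting by a power of the determinant so that all weights involved are partitions of size $n$ or $n\pm1, n+2$. The map $V_{\cw}\otimes\C^N\otimes\C^N\to V_{\ce}$ then sees the two intermediate paths, through $\cn_v$ or through $\cs_v$. Their relative position is governed by the action of the transposition $(n+1\ n+2)$ on the Young seminormal (Gelfand--Zetlin) basis, which Okounkov--Vershik theory describes.

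\textbf{Step 3: evaluating the vertex factors.} On the two-dimensional span of the Gelfand--Zetlin vectors indexed by the paths $\cw\leads\cn\leads\ce$ and $\cw\leads\cs\leads\ce$, the transposition acts by the matrix
\[\begin{pmatrix} 1/r & \sqrt{1-1/r^2}\\ \sqrt{1-1/r^2} & -1/r\end{pmatrix},\]
where $r$ is the content difference appearing in \eqref{eq:defthetavL}. Its entries are exactly $\cos\theta^\Lambda_v$ on the diagonal, the type~$1$ case $\lambda_{\cn_v}=\lambda_{\cs_v}$, and $\sin\theta^\Lambda_v$ off the diagonal, the type~$2$ case. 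When $\cn_v$ and $\cs_v$ do not both exist as partitions, the space is one-dimensional and $r=\pm1$, so the formula still holds. The factors $(d_{\lambda_{\cn_v}}d_{\lambda_{\cs_v}})^{-1/2}$ come from normalising the intertwiners, that is, from the Schur orthogonality constants $1/d_\lambda$ attached to each edge integration, redistributed to the vertices. Collecting the dimension powers then produces $d_{\lambda_F}^{\e_F}$ with the correct exponent, by an Euler characteristic count on $\bar F$. Circular edges and closed linear edges need separate but easy treatment: a circular edge contributes only through the ratio $d_{\lambda_{\rf}}/d_{\lambda_{\lf}}$, which is absorbed into $\e_F$.

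\textbf{Main obstacle.} The hardest part is Step 3. Two things must be checked there. First, the global contraction of intertwiners must factorise into a product of local vertex terms with consistent phases, which requires fixing compatible bases, for instance Gelfand--Zetlin bases, coherently on all edges so that no stray signs remain. Second, the dimension bookkeeping must produce exactly $\e_F$ together with the vertex factors $(d_{\lambda_{\cn_v}}d_{\lambda_{\cs_v}})^{-1/2}$. I would first verify both points on a single figure-eight loop in the plane, then argue by locality via gluing of faces.
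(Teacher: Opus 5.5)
\textbf{Different route in Steps~1--2.} Your Steps~1--2 take a genuinely different and sound route from the paper's Sections~3--6. The paper rewrites Schur functions with the projectors $\pi^{\lambda}$, integrates with Collins--\'Sniady, and then sums over $\prod_e\S_{r_e}$. You instead use that $\int\rho_{\lambda}(g)\otimes g\otimes\overline{\rho_{\mu}(g)}\,\nsd g$ is the rank-one projector onto the Pieri-invariant line. This cuts the network at every edge and leaves a tetrahedral ($6j$-type) evaluation at each vertex.

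The bookkeeping you leave open does close up:
\begin{itemize}
\item Write the unit invariant vector as $d_{\lambda_{\rf_e}}^{-1/2}$ times an isometric intertwiner, and split this factor between the two half-edges. The vertex $v$ then receives $d_{\lambda_{\ce}}^{-1}(d_{\lambda_{\cn}}d_{\lambda_{\cs}})^{-1/2}$.
\item The contraction itself is $d_{\lambda_{\ce}}\,c_v$. Here $c_v$ is the scalar comparing the couplings of $V_{\lambda_{\cw}}\otimes\C^N\otimes\C^N$ to $V_{\lambda_{\ce}}$ through $\lambda_{\cn}$ and through $\lambda_{\cs}$, with the two $\C^N$ factors swapped.
\item For corresponding choices of intertwiners, Schur--Weyl duality identifies $c_v$ with the paper's $a(v)$. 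You still need to check that isometric intertwiners transfer to isometric ones, up to phases you can absorb.
\end{itemize}
This route is shorter than the paper's and makes Witten's $6j$ remark explicit.

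\textbf{The gap: signs at type-2 vertices.} The genuine gap is the one you call the main obstacle, and it is exactly the content of the paper's Step~7. The off-diagonal entry of the swap between the two path vectors is $\pm\sin\theta^{\Lambda}_v$, and the sign depends on the choice of intertwiners.

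Only $\prod_v c_v$ is choice-independent, and only once you fix one intertwiner per pair $\lambda\leads\mu$ rather than per edge; then each choice enters once as a ket and once as a bra along every edge. To obtain $\prod_{v\in\sV^{\Lambda}_2}\sin\theta^{\Lambda}_v$ with the correct sign, you must exhibit a single global choice that makes all off-diagonal coefficients nonnegative at once. Writing down Young's orthogonal matrix does not achieve this unless the intertwiners are tied to the signed bases.

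The paper supplies the missing idea in two parts:
\begin{itemize}
\item It constructs, for each $\lambda$, an orthonormal basis $(v_T)$ satisfying \eqref{eq:defaction} (Proposition~\ref{prop:GZbasis}, via the Coxeter relations and the Jucys--Murphy characterisation).
\item It sets $i_{\mu\lambda}(v_T)=v_{S(T,\mu)}$, which is $\S_{n-1}$-equivariant precisely because of that normal form.
\end{itemize}
Only then is $a(v)$ literally a matrix entry of Young's orthogonal form (Proposition~\ref{prop:determinea}).

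Your proposed verification cannot detect the problem. For a figure-eight in the plane, the outer face occupies both $\cn_v$ and $\cs_v$, so the only vertex is always of type~1 and no sign is ever tested. ``Locality via gluing'' gives no control over a global sign either.

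\textbf{Circular edges.} A smaller error: a circular edge contributes exactly $1$, because $\int\Tr(g)\,s_{\lambda}(g)\overline{s_{\mu}(g)}\,\nsd g=1$ when $\lambda\leads\mu$. It does not contribute $d_{\lambda_{\rf}}/d_{\lambda_{\lf}}$. Nothing can be absorbed into $\e_F$, which is topological; the factor $d_{\lambda_F}^{\e_F}$ comes entirely from your Step~1. With your ratio, a single counterclockwise circle in the plane would give $e^{-|F|/2}$ instead of the correct $N e^{-|F|/2}$.
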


A few remarks are in order. 

\begin{enumerate}[\sbullet]
\item The closed case is a special case of the case with boundary. The only three differences between the two expressions are the value of the partition function, the constraint on the configurations of highest weights, hidden in the definition of $\B$, of vanishing at the free boundary, and the presence of the product of the Schur functions evaluated at the boundary conditions. 
\smallskip

\item In each term of the sum, the sum of the exponents of the dimensions $d_{\lambda}$ is equal to the Euler characteristic of $\Sigma$, that we denote by ${\sf e}_{\Sigma}$. To put it in symbols, we mean that the following equality holds:
\[\sum_{F\in \sF}{\sf e}_{F} - |\sV|={\sf e}_{\Sigma}.\]
Indeed, since the blow up of $\Sigma$ along $\sG$ that produces $\bar \Sigma$ duplicates the edges and turns each vertex into four points, we can compute the Euler characteristic $\e_{\bar \Sigma}$ of $\bar \Sigma$ in two ways, taking into account that circular edges have Euler characteristic $0$:
\[\sum_{F\in \sF} \e_{F}=\e_{\bar \Sigma}=\e_{\Sigma}-|\sLE|+3|\sV|.\]
Counting in two ways the linear half-edges of $\sG$ yields $2|{\sf L}\sE|=4|\sV|$, and the equality follows.
\smallskip

\item A case that is often considered in $2$-dimensional Yang--Mills theory is that where the surface~$\Sigma$ is the plane. As far as the holonomy along the loops of a loop configuration is concerned, there is no difference between the whole plane and a large disk, that is, a disk large enough to contain the loop configuration, with a free boundary condition. Therefore, the case of the plane is treated by the theorem.
\smallskip

\item When $\Sigma$ has at least one boundary component that is left free of any boundary condition, the partition function $Z(\Sigma,\ul{w})$ is equal to $1$, see \eqref{eq:Z=1}. 
\smallskip

\item By definition, $\cos \theta^{\Lambda}_{v}$ is a rational number, but in general $\sin \theta^{\Lambda}_{v}$ is not. However, we will prove that in the last product, over vertices of type $2$, the sine of each angle appears an even number of times (see Proposition \ref{prop:rational}). 
\end{enumerate}

\subsection{Structure of the proof}\label{sec:structure}

The proof of Theorem \ref{thm:main} is a fairly substantial computation, that we will do step by step, and that will occupy us for the next seven sections. 
\begin{enumerate}[\sbullet]
\item In Section \ref{sec:DS}, we use the characterisation of the Yang--Mills holonomy process through the Driver--Sengupta formula to give a first expression of the Wilson loop expectation as an integral over a finite product of copies of the unitary group, see Proposition \ref{prop:expressiontocompute}.
\item In Section \ref{sec:expansionHK}, we expand the integrand of this integral, a product of heat kernels, in Fourier, or rather Peter--Weyl, series. This gives the Wilson loop expectation the form of an infinite sum of integrals over the same finite product of copies of the unitary group, but now of rational functions, see Proposition \ref{prop:expanded}. From now on, our attention focuses on this integral, that we call the {\em flat contribution} of a configuration of highest weights, see \eqref{eq:I}.
\item In Section \ref{sec:Schurusingperm}, we use the Schur--Weyl duality to express Schur functions in terms of permutations and actions of symmetric group on various vector spaces. This results in an expression, in the spirit of spin networks, of the flat contribution of a configuration of highest weights as the trace of the product of three linear operators, only one of which is still an integral, see Proposition \ref{prop:I3}. 
\item In Section \ref{sec:integrationoverU}, we use the Collins--\'Sniady integration formula to perform the integration, over a product of copies of the unitary group. We then reorganise the trace of the product of three linear operators and arrive at an expression that does not involve unitary matrices anymore, and that is now a sum, over a product of symmetric groups, of the product of local contributions, one for each vertex of the graph, see Proposition \ref{prop:I3.5}.
\item In Section \ref{sec:symmetricmodules}, we go one step further in leaving the unitary world, and express all the quantities in terms of irreducible representations of the symmetric groups, see Proposition~\ref{prop:I4}. This is actually a small step, and the role of this section is to set the scene for the last two steps of the proof, in which the representations of the symmetric groups play the main role. 
\item In Section \ref{sec:summingpermutations}, we perform the sum over a product of symmetric groups that appears in our current expression of the flat contribution of a configuration of highest weights, and finally obtain a concise expression that looks quite similar to the expression that we seek, see Proposition \ref{prop:expI5}. A scalar, defined in \eqref{eq:defa}, makes its appearance during this computation, and remains to be computed.
\item In Section \ref{sec:detscal}, we use the Okounkov--Vershik approach to the representations of symmetric groups to compute this scalar, of which the value is given by Proposition \ref{prop:determinea}. The sign of this scalar depends, in some cases, on certain choices made during the proof, and a part the section is devoted to the construction of good choices, namely of good bases of irreducible symmetric modules, see Proposition \ref{prop:GZbasis}. The final expression of the flat contribution of a configuration of highest weights is given by Proposition \ref{prop:eqI6}, and the proof is then concluded in a few lines. 
\end{enumerate}

Section \ref{sec:after} is not part of the proof, but contains some further results. In particular, we
\begin{enumerate}[\indent\sbullet]
\item prove that the flat contribution of a configuration of highest weights is a rational number,
\item prove that the Wilson loop expectation computed by our main theorem is equal to $0$ if the total homology class of the loop configuration does not vanish in the relative homology group $H^{1}(\Sigma,\partial \Sigma)$,
\item describe from a cohomological point of view the sign ambiguity that is lifted by appropriate choices in Section \ref{sec:detscal},
\item explain how our main result allows one to recover the Makeenko--Migdal equations, which are, in informal terms, a set of differential equations on the space of loops satisfied by Wilson loop equations. 
\end{enumerate}


\setcounter{section}{0}
\renewcommand*{\thesection}{\arabic{section}}

\section{Step 1 --- Derivation of a first integral expression}\label{sec:DS}

The starting point of the proof is the description of the distribution of the Yang--Mills holonomy process on a compact surface, by means of the Driver--Sengupta formula. In this section, we recall this formula and use it to put the left-hand side of our main equalities \eqref{eq:main} and \eqref{eq:main2} under the form of an integral, see \eqref{eq:integraltocompute}.

A presentation of the lattice Yang--Mills theory and of the Driver--Sengupta formula can be found for instance in \cite{SenguptaAMS,LevyMHF,LevyMM}. 

\subsection{Graphs}

The Driver--Sengupta formula describes the joint distribution of the random holonomy along any finite collection of loops traced in a graph on $\Sigma$, provided that the faces of this graph are homeomorphic to disks. Let us start by introducing a notion of graph appropriate to the loop configurations we are working with.

By an {\em edge} on $\Sigma$, we mean any one of the following (see Figure \ref{fig:edges}):
\begin{itemize}
\item a circular edge, that is, an embedding of $S^{1}$ into $\Sigma$, 
\item a closed linear edge, that is, an embedding of $(S^{1},\vsbullet)$, into $\Sigma$,
\item an open linear edge, that is, an embedding of $[0,1]$ into $\Sigma$.
\end{itemize}

Recall that $S^{1}$ is oriented, so that all edges are oriented. A circular edge has no endpoint. An open linear edge has one endpoint which is the image of the distinguished point. A closed linear edge has two distinct endpoints.

If $\Sigma$ has a boundary, we add the condition that an edge must either be disjoint from $\partial \Sigma$, or intersect $\partial \Sigma$ only at one or both of its endpoints, or be completely contained in $\partial \Sigma$, and in this last case be positively oriented as a piece of the boundary component that contains it. 

By a {\em graph} on $\Sigma$, we mean a collection of edges on $\Sigma$ that are pairwise disjoint, except for possible encounters at their endpoints. The {\em vertices} of a graph are defined as the endpoints of its edges, and its {\em faces} as the connected components of the complement of the union of the edges. A graph is denoted as a triple $\bG=(\bV,\bE,\bF)$ consisting of the sets of vertices, edges, and faces. It is entirely determined by the set of  edges. 

\begin{figure}[h!]
\begin{center}
\includegraphics{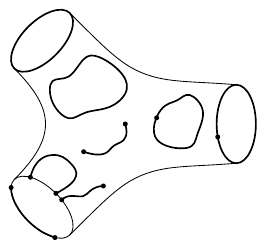}
\caption{\label{fig:edges}\small This figure illustrates, in the case where $\Sigma$ is a $3$-holed sphere, the various kinds of edges that we allow ourselves to consider, and their relation to the boundary of the surface.}
\end{center}
\end{figure}

A graph is said to be {\em cellular} if its faces are homeomorphic to open disks. There are only two cases where a cellular graph can contain a circular edge: the first is when $\Sigma$ is a disk and the graph has a single edge that goes along the boundary of the disk, and the second is when $\Sigma$ is a sphere and the graph has a single edge that is a circular edge. 

Note that if $\Sigma$ has a boundary and $\bG$ is a cellular graph on $\Sigma$, then the boundary of $\Sigma$ must be covered by $\bG$, and every boundary component of $\Sigma$ is the range of a loop of $\bG$.

In the terminology of \cite{LevyMHF}, what we  now call a graph and a cellular graph was respectively called a pre-graph and a graph, with the minor difference that we are allowing circular edges. Another difference is that, in contrast with the presentation of \cite{LevyMHF}, and for the sake of simplicity, we choose to let the set~$\bE$ contain each edge with one orientation only. Let us mention again that the parametrisation of edges is irrelevant to our problem, and we often allow ourselves to identify an edge with its range.

\subsection{The Driver--Sengupta formula} \label{sec:DSformula}

Let us consider a graph $\bG=(\bV,\bE,\bF)$ on $\Sigma$. The configuration space of lattice Yang--Mills theory on $\bG$ is the Cartesian product $\U(N)^{\bE}$. An element of this configuration space will be called a {\em lattice gauge configuration} and will generically be denoted by $g=(g_{e})_{e\in \bE}$. 

A {\em path} in $\bG$ is a well-chained sequence of edges, and a {\em loop} is a path with identical initial and final point. A path $c$ can be written as $c=e_{1}^{\epsilon_{1}}\ldots e_{k}^{\epsilon_{k}}$, where $e_{1},\ldots,e_{k}$ are edges  and the exponents $\epsilon_{1},\ldots,\epsilon_{k}=\pm 1$ indicate if an edge is traversed with positive or negative orientation. To a path~$c$, written in this way, we associate the discrete holonomy map $h_{c}:\U(N)^{\bE}\to \U(N)$ defined by  $h_{c}(g)=g_{e_{k}}^{\epsilon_{k}}\ldots g_{e_{1}}^{\epsilon_{1}}$. 

Let us now assume that $\bG$ is a cellular graph and write the Driver--Sengupta formula. To each face $F$ of $\bG$, which is homeomorphic to a disk, is associated a loop $\partial F$, that goes once positively around the boundary of $F$. This loop can be constructed by performing the same operation of blow up of $\Sigma$ along $\bG$ that we described in Section \ref{sec:graphskein}, considering the closed disk $\bar F$ associated to $F$, and taking the image of the boundary of this disk by the projection map. Let us mention that it can also be constructed by purely combinatorial means, using the cyclic ordering of the edges around each vertex, see for example \cite{LandoZvonkin}. In any case, the origin of this loop is not specified, and it is defined only up to a circular permutation of the sequence of edges that it traverses. As we will see shortly, this is not a problem. 

\index{pt@$p_{t}$, heat kernel on $\U(N)$} 

The Riemannian metric on $\U(N)$ determines a Laplace--Beltrami operator~$\Delta$. The equation $\big(\partial_{t}-\tfrac{1}{2}\Delta\big)p=0$ has a unique smooth positive solution $p:\R^{*}_{+}\times \U(N)\to \R^{*}_{+}$ with the boundary condition $\lim_{t\downarrow 0} \int_{\U(N)}f(x)p(t,x)  \d x=f(I_{N})$ for every continuous real-valued function $f$ on~$\U(N)$, the measure $\nsd x$ being the normalised Haar measure on $\U(N)$. The function $p$ is called the {\em heat kernel} (starting from the identity) on $\U(N)$. For all real $t>0$ and $x\in \U(N)$, we will write $p_{t}(x)$ instead of $p(t,x)$.

For all $t>0$, the function $p_{t}$ on $\U(N)$ is constant on conjugacy classes, so that for all face $F$ of  $\bG$, the positive function $g\mapsto p_{t}(h_{\partial F}(g))$ is well defined on the configuration space. 

In the case where $\Sigma$ is closed, we can write the Driver--Sengupta formula : for all loops $l_{1},\ldots,l_{m}$ in the cellular graph~$\bG$, we have
\begin{equation}\label{eq:DSformula}
Z(\Sigma)\, {\bf E}\Big[\prod_{j=1}^{m} \Tr(H_{l_{j}})\Big]=\int_{\U(N)^{\bE}} \prod_{j=1}^{m} \Tr(h_{l_{j}}(g)) \prod_{F\in \bF} p_{|F|}(h_{\partial F}(g))\d g,
\end{equation}
where $Z(\Sigma)$ is the normalisation constant that ensures that the expectation of the empty product is equal to $1$. It turns out that $Z(\Sigma)$ depends only on the genus and total area of $\Sigma$, and not on the cellular graph $\sG$. We will give an expression of this number shortly, see \eqref{eq:defZ}.

In the case where $\Sigma$ has a boundary, we need to incorporate the boundary conditions, and this is done by changing the reference measure $\nsd g$ on $\U(N)^{\bE}$.  For all $w\in \U(N)$, let $\mathcal O_{w}$ denote the conjugacy class of $w$ in $\U(N)$. For all $r\geq 1$ and $w\in \U(N)$, let $\delta_{[w]}$ denote the unique probability measure on 
\[\{(x_{1},\ldots,x_{r})\in \U(N)^{r} : x_{1}\ldots x_{r} \in \mathcal O_{w}\}\]
that is invariant under the transitive action of $\U(N)^{r}$ on this set by 
\[(y_{1},\ldots,y_{r})\cdot (x_{1},\ldots,x_{r})=(y_{1}x_{1}y_{2}^{-1},y_{2}x_{2}y_{3}^{-1},\ldots,y_{r}x_{r}y_{1}^{-1}).\]
The measure $\delta_{[w]}$ is, in the most natural sense,  the uniform measure on the set of $r$-tuples of elements of $\U(N)$ of which the product belongs to $\mathcal O_{w}$ (see \cite[Section 2.3.2]{LevyMHF} for more details). 

Recall that boundary conditions $w_{1},\ldots,w_{\k}$ are imposed on the constrained boundary components $C_{1},\ldots,C_{\k}$ of $\Sigma$. For each $i\in \{1,\ldots,\k\}$, let us write $C_{i}$ as a product of edges $e_{i,1}\ldots e_{i,r_{i}}$ of $\bG$. Then in order to take the boundary conditions into account, we must replace, in \eqref{eq:DSformula}, the Haar measure $\nsd g$ on $\U(N)^{\bE}$ by the measure $\nsd_{\ul{w}} g$ that is the product of 
\begin{enumerate}[\indent\sbullet]
\item the measure $\delta_{w_{1}}(g_{1,i_{1}},\ldots,g_{1,1})\ldots \delta_{w_{\k}}(g_{\k,i_{\k}},\ldots,g_{\k,1})$ on the edge variables corresponding to the edges located on the constrained boundary components, and 
\item the Haar measure on all the other edge variables. 
\end{enumerate}

This affects the normalisation constant, which now depends on the boundary conditions, that we denote by $Z(\Sigma,\ul{w})$, and of which the value is given by \eqref{eq:Z=1} and \eqref{eq:defZwb}. Therefore, when $\Sigma$ has a boundary, the Driver--Sengupta formula becomes
\begin{equation}\label{eq:DSformulawb}
Z(\Sigma,\ul{w})\, {\bf E}\Big[\prod_{j=1}^{m} \Tr(H_{l_{j}})\Big]=\int_{\U(N)^{\bE}} \prod_{j=1}^{m} \Tr(h_{l_{j}}(g)) \prod_{F\in \bF} p_{|F|}(h_{\partial F}(g))\d_{\ul{w}} g,
\end{equation}

If the loop configuration $\ul\ell=(\ell_{1},\ldots,\ell_{m})$ that we consider is dense enough, in a non-technical sense of the word, the graph $\sG=(\sV,\sE,\sF)$ may very well have faces homeomorphic to disks. In this case, the Driver--Sengupta applies directly to the graph $\sG$. However, in general, there is a small gap to fill in order to apply this formula to a loop configuration that can have isolated embedded loops, and faces with a topology more complicated than that of a disk. 

\subsection{The heat kernel and partition functions}

The integral expression of the Wilson loop expectation that we will obtain at the end of this first step of the proof involves a multivariate version of the heat kernel, that we will now describe. Let us fix a real $t>0$ and two integers~$\b$ and~$\e$, such that $\b\geq 0$ and $\e+\b=2-2\g$ for some non-negative integer $\g$. For all $u,v\in \U(N)$, we use the notation $[u,v]=uvu^{-1}v^{-1}$. Then for all $x_{1},\ldots,x_{\b}\in \U(N)$, we define
\index{ptbe@$p_{t}^{\b,\e}$, multivariate heat kernel}
\begin{equation}\label{eq:defpmulti}
p_{t}^{\b,\e}(x_{1},\ldots,x_{\b})=\int_{\U(N)^{2\g+\b}}\hspace{-5mm}p_{t}\big([u_{1},v_{1}]\ldots [u_{\g},v_{\g}]z_{1}x_{1}z_{1}^{-1}\ldots z_{\b}x_{\b}z_{\b}^{-1}\big)\d u_{1}\nsd v_{1}\ldots \nsd u_{\g}\nsd v_{\g}\d z_{1}\ldots \nsd z_{\b}.
\end{equation}
The function $p_{t}^{\b,\e}$ is  the partition function of the Yang--Mills measure on a compact orientable surface of total area $t$ with Euler characteristic $\e$ and $\b$ boundary components (see \cite{LevyMHF}). It is a symmetric function of $\b$ arguments, invariant by conjugation in each of its arguments. Its integral with respect to any of its arguments is equal to $1$. Note that the function $p^{1,1}_{t}$ is the usual heat kernel $p_{t}$. 

In the case where $\b=0$, the left-hand side of \eqref{eq:defpmulti} is a function without arguments, that is a number, namely the partition function of the Yang--Mills measure on a closed surface of genus~$\g$ and total area $t$. If moreover $\g=0$, then the surface is a sphere and $p^{0,2}_{t}=p_{t}(I_{N})$.

Let us use these multivariate heat kernels to finally give expressions of the normalisation constants $Z(\Sigma)$ and $Z(\Sigma,\ul{w})$ that appeared in \eqref{eq:WLE}, in Theorem \ref{thm:main}, and in the Driver--Sengupta formula \eqref{eq:DSformula}. Recall that $|\Sigma|$ denotes the area of the surface $\Sigma$ on which we work. 

If $\Sigma$ is closed, let us define 
\begin{equation}\label{eq:defZ}
Z(\Sigma)=p_{|\Sigma|}^{0,\e_{\Sigma}}.
\end{equation} 
If $\Sigma$ has a boundary, two cases must be distinguished. The first is when $\Sigma$ has at least one free boundary component. In this case, where $\k$ is strictly smaller than the number of boundary components of $\Sigma$,
\begin{equation}\label{eq:Z=1}
Z(\Sigma,\ul{w})=1.
\end{equation}
The second case is when every boundary component is constrained. In this case,
\begin{equation}\label{eq:defZwb}
Z(\Sigma,\ul{w})=p^{{\k},{\sf e}_{\Sigma}}_{|\Sigma|}(w_{1},\ldots,w_{\k}).
\end{equation}

\index{Z@$Z(\Sigma), Z(\Sigma,\ul{w})$, partition function}

\subsection{An integral expression}
We can now write the integral expression that will be the starting point of the proof of our main result. Let $\sG=(\sV,\sE,\sF)$ be the graph associated to the loop configuration $\ul\ell=(\ell_{1},\ldots,\ell_{m})$ that we consider on the surface $\Sigma$. 

Each of the immersions $\ell_{1},\ldots,\ell_{m}$ determines a loop in the graph $\sG$. We base each of them at a vertex, in an arbitrary way, and keep the notation $\ell_{1},\ldots,\ell_{m}$ for the based loops thus defined. Note that isolated embedded loops cannot be based at a vertex, but they do not need to: if some $\ell_{r}$ is an isolated embedded loop, then it is an edge of $\sG$, and the function $h_{\ell_{r}}(g)$ is well defined. 

Each face $F$ of $\sG$ also determines one or several loops, that bound $F$ positively. They are the images by the projection map $\bar \Sigma\to \Sigma$ of the boundary components of the compact surface~$\bar F$ associated to $F$.

If $\Sigma$ is closed, we denote the corresponding loops by $\partial_{1}F,\ldots,\partial_{\b_{F}}F$, and as before, we base each of them that is not an isolated embedded loop at a vertex in an arbitrary way. 
 
\index{dziF@$\partial_{i}F$, external boundary components of $F$}
 
If $\Sigma$ has a boundary, as discussed at the end of Section \ref{sec:graphskein}, some boundary components of~$\bar F$ belong to $\partial \Sigma$, and the others, called external, stem from the operation of blow up. Only the external boundary components of $F$ give rise to loops in $\sG$. Recall that we denote their number by $\x_{F}$, and let us denote the corresponding loops by $\partial_{1}F,\ldots,\partial_{\x_{F}}F$, as always based at an arbitrary vertex.

We will write an integral over $\U(N)^{\sE}$ with respect to the measure $\nsd g$ that is the product of the normalised Haar measures on each factor.

For the sake of the case where $\Sigma$ has a boundary, let us define $\sF^{\text{free}}$ as the subset of $\sF$ consisting of the faces of $\sG$ that contain at least one free boundary component of $\Sigma$. 

\index{Fa@$\sF^{\text{free}}$, faces containing a free b.\,c.}

\begin{proposition} \label{prop:expressiontocompute} If $\Sigma$ is closed, then the following equality holds:
\begin{equation}\label{eq:integraltocompute}
Z(\Sigma)\, {\bf E}\Big[\prod_{j=1}^{m} \Tr(H_{\ell_{j}})\Big]=\int_{\U(N)^{\sE}} \prod_{j=1}^{m}\Tr(h_{\ell_{j}}(g)) \prod_{F\in \sF} p_{|F|}^{\b_{F},\e_{F}}(h_{\partial_{1}\!F}(g),\ldots,h_{\partial_{{\b_{F}}}\! F}(g)) \d g.
\end{equation}
If $\Sigma$ has a boundary, with boundary conditions $w_{1},\ldots,w_{\k}$ imposed on the boundary components $C_{1},\ldots,C_{\k}$, then the following equality holds:
\begin{align}\label{eq:integraltocomputewb}
&Z(\Sigma,\ul{w})\, {\bf E}\Big[\prod_{j=1}^{m} \Tr(H_{\ell_{j}})\Big]\\
&\hspace{2cm} =\int_{\U(N)^{\sE}} \prod_{j=1}^{m}\Tr(h_{\ell_{j}}(g)) \prod_{F\in \sF\setminus \sF^{\text{free}}} p_{|F|}^{\b_{F},\e_{F}}\big(\{w_{i} : C_{i} \subset F\},\{h_{\partial_{i}\!F}(g) : i=1\ldots \x_{F}\}\big) \d g. \nonumber
\end{align}
\end{proposition}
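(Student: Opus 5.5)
We will refine $\sG$ into a cellular graph $\bG$, apply the Driver--Sengupta formula \eqref{eq:DSformula} or \eqref{eq:DSformulawb} to $\bG$, and then integrate out the edge variables of $\bG$ that do not belong to $\sG$, face by face. The integrals that remain inside each face of $\sG$ will be exactly the integrals in \eqref{eq:defpmulti} that define $p^{\b_F,\e_F}_{|F|}$.

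\textbf{Choice of $\bG$ and first integrations.} Fix a face $F$ of $\sG$ and the compact surface $\bar F$, of genus $\g_F$ with $\b_F$ boundary components, so that $\e_F+\b_F=2-2\g_F$.

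Inside $F$ we add the following edges:
\begin{itemize}
\item one simple arc $z_i$ from a base point $o_F$ in the interior of $F$ to a vertex on each external boundary loop $\partial_iF$;
\item a vertex and a small subdivision on any circular edge, so that every boundary loop passes through a vertex;
\item a standard system of $2\g_F$ loops $a_1,b_1,\dots,a_{\g_F},b_{\g_F}$ based at $o_F$;
\item in the case with boundary, an arc to each internal boundary component, which is itself taken as an edge (subdivided once if needed), positively oriented.
\end{itemize}
We choose these so that cutting $\bar F$ along them leaves a single disk $D_F$. Its boundary word, read from $o_F$, is
\[
[a_1,b_1]\cdots[a_{\g_F},b_{\g_F}]\; z_1\,\partial_1F\,z_1^{-1}\cdots z_{\b_F}\,\partial_{\b_F}F\,z_{\b_F}^{-1},
\]
with the internal boundary components appearing in the same way. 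This is the classical normal form of a compact orientable surface with boundary.

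The graph $\bG$ is cellular and its faces are the disks $D_F$, with $|D_F|=|F|$. The loops $\ell_j$ are loops in $\bG$, and their holonomies only involve edges of $\sG$. Applying the Driver--Sengupta formula on $\bG$ gives an integral over $\U(N)^{\bE}$ in which each face contributes one factor $p_{|F|}(h_{\partial D_F}(g))$.

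Let $g_{\sE}$ denote the variables of the edges of $\sG$. By Fubini, for fixed $g_{\sE}$, the integral over the variables $u_k=g_{a_k}$, $v_k=g_{b_k}$ and $g_{z_i}$ attached to $F$ is precisely $p^{\b_F,\e_F}_{|F|}(h_{\partial_1F}(g),\dots)$. Two small checks are needed here. Since $h$ is an anti-homomorphism, $h_{z\partial z^{-1}}=g_z^{-1}h_\partial g_z$, which has the same law as $z\,h_\partial\,z^{-1}$ under Haar measure. The conventions for commutators and for reversing words are also harmless, because $p_t$ is a class function invariant under inversion. This proves \eqref{eq:integraltocompute}.

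\textbf{The case with boundary.} For a constrained component $C_i\subset F$, the measure $\delta_{[w_i]}$ on its edge variables, combined with the conjugation by the Haar variable of the connecting arc, replaces the corresponding argument by $w_i$. This uses the invariance of $\delta_{[w_i]}$ and the conjugation invariance of $p_t$. The boundary conditions therefore feed into $p^{\b_F,\e_F}_{|F|}$ as the arguments $w_i$.

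For a face $F\in\sF^{\text{free}}$, the Haar integral over the variable of one free boundary component, which enters exactly once through the conjugated letter, can be absorbed by the translation invariance of Haar measure. The face factor then integrates to $\int p_t=1$, regardless of all other variables. This explains why such faces drop out of \eqref{eq:integraltocomputewb}.

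Finally, the normalising constants $Z(\Sigma)$ and $Z(\Sigma,\ul w)$ are the same for any cellular graph. This is the content of the assertions already recorded after \eqref{eq:DSformula} and in \eqref{eq:defZ}--\eqref{eq:defZwb}.

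\textbf{Expected main obstacle.} The substantive point is topological: constructing the edges in each face so that the result is a legitimate graph in the sense of the paper, with edges meeting only at endpoints and the boundary compatibility condition satisfied, and so that the boundary word of the single remaining disk is exactly the normal form above, with the orientations of the $\partial_iF$ positive. I would obtain this from the classification of surfaces applied to $\bar F$, then push the construction down through the projection $\bar\Sigma\to\Sigma$, which is injective on the interior of $\bar F$.

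Base points of the loops $\partial_iF$ are irrelevant, because $p^{\b,\e}_t$ is invariant under conjugation of each argument.
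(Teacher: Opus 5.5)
Your proposal is correct and follows essentially the same route as the paper. Inside each face $F$, you refine $\sG$ using the standard normal form of $\bar F$; the paper realises this as a $4\g+3\b$-gon with the classical side identifications. This gives a cellular graph with exactly one face per face of $\sG$, to which the Driver--Sengupta formula is applied, and integrating out the interior edge variables yields $p^{\b_F,\e_F}_{|F|}$. Your treatment of the boundary is also the paper's: constrained boundary holonomies are replaced by the $w_i$ using conjugation invariance, and each face containing a free boundary component drops out because the heat kernel integrates to $1$ in the corresponding Haar variable.
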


In \eqref{eq:integraltocomputewb}, we took advantage of the symmetry of the multivariate heat kernel under permutation of its arguments, and wrote them without specifying any order. We could also have written, in \eqref{eq:integraltocompute}, the factor indexed by the face $F$ as $p^{\b_{F},\e_{F}}_{|F|}(\{h_{\partial_{i}F}(g) : i=1\ldots \b_{F}\})$.
Incidentally, the number of arguments of each heat kernel in \eqref{eq:integraltocomputewb} is the right one, because for a face $F$ that does not contain any free boundary component of $\Sigma$, the number $\b_{F}$ of boundary components of $\bar F$ is the sum of the number of constrained internal boundary components and the number $\x_{F}$ of external boundary components. 

\begin{proof} In order to apply the Driver--Sengupta formula, we need to refine the graph $\sG$ into a cellular graph. For this, let us start by adding a vertex on each circular edge of $\sG$, and a closed linear edge on each boundary component of $\Sigma$. Then, let us lift this graph to the surface $\bar \Sigma$ obtained by blowing up $\Sigma$ along $\sG$. In this way, each boundary component of $\bar \Sigma$ is endowed with a graph which covers exactly its boundary, and therefore is not cellular in general. 

Now, we claim that it is possible to refine the graph on each component of $\bar\Sigma$ to a cellular graph with one face. Indeed, let us consider a connected component $\bar F$ of $\bar \Sigma$ that has  $\b$ boundary components and genus $\g$, that is, Euler characteristic $\e=2-2\g-\b$. The~$\b$ boundary components of $\bar F$ are covered by $\b$ loops $l_{1},\ldots,l_{\b}$, and there are no edges in the interior of $\bar F$.

Consider now a $4\g+3\b$-gon and identify pairwise $4\g+2\b$ of its sides in the most classical way, illustrated on an example in Figure \ref{fig:pattern}, that produces a surface with $\b$ boundary components and Euler characteristic $\e$, therefore homeomorphic to $\bar F$, and that we identify with $\bar F$. Among the vertices of the polygon, some are sent on the boundary of $\bar F$, and all others are sent to one single vertex in the interior of the surface. The seams of the identification of the sides of the polygon produce $2\g$ closed linear edges $e_{1},f_{1},\ldots,e_{\g},f_{\g}$ based at this inner vertex and $\b$ open linear edges $c_{1},\ldots,c_{\b}$ joining this vertex to each of the boundary components. These edges, together with the edges already present on the boundary, produce a graph with one single face, with boundary $[e_{1},f_{1}]\ldots [e_{\g},f_{\g}]c_{1}l_{1}c_{1}^{-1}\ldots c_{\b}l_{\b}c_{\b}^{-1}$.

\begin{figure}[h!]
\begin{center}
\scalebox{0.78}{\includegraphics{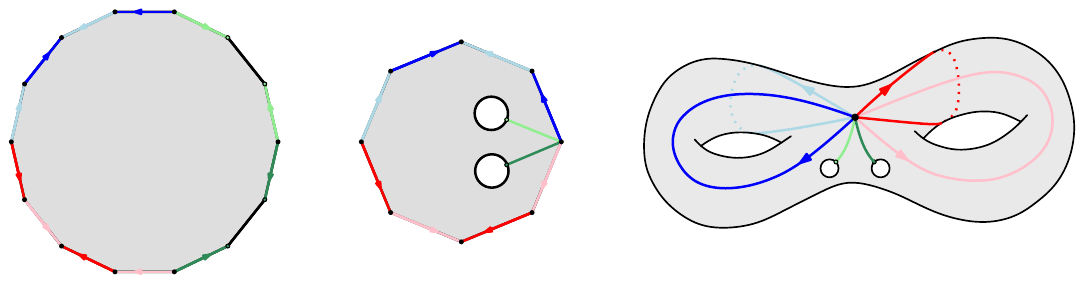}}
\caption{\label{fig:pattern}\small This figure illustrates a standard identification of a $14$-gon to produce a surface of genus $2$ with $2$ boundary components. The black edges are not to be identified and give rise to the boundary components.}
\end{center}
\end{figure}

Finally, let us glue back together the connected components of $\bar\Sigma$ along $\sG$, thereby producing a cellular graph $\bG$ on $\Sigma$ which has exactly one face in each face of $\sG$.

We can now apply the Driver--Sengupta formula \eqref{eq:DSformula} or \eqref{eq:DSformulawb} to the graph $\bG$. Whether $\Sigma$ has a boundary or not, the integrand contains one factor for each loop of the loop configuration, and one factor for each face of $\bG$, that is, for each face of $\sG$. 

Each free boundary component of $\Sigma$, if there is any, is covered by a closed linear edge of $\bG$ that appears exactly once, in the term corresponding to the face of $\sG$ that contains this component. Under the measure $\nsd_{\ul{w}}g$, the corresponding edge variable is distributed according to the Haar measure and independent of all other edge variables. Therefore, integrating with respect to this variable produces the integral of a multivariate heat kernel with respect to one of its arguments, that is equal to $1$, as we indicated shortly after \eqref{eq:defpmulti}. Thus, the corresponding factor disappears.

Under the measure $\nsd_{\ul{w}}g$, each loop that covers a constrained boundary component $C_{i}$ has a holonomy that belongs to the conjugacy class of $w_{i}$, and that is independent of all the other edge variables. Therefore, we can replace the holonomy along each constrained boundary component, which occurs exactly once in the Driver--Sengupta formula, by the corresponding boundary condition. 

This being done, there remains to integrate with respect to the edge variables corresponding to edges of $\bG$ that are in the interior of the faces of $\bG$. We chose the cellular graph on each connected component of $\bar \Sigma$ in such a way that, by definition of the multivariate heat kernel, this integration literally produces the factor 
\[p_{|F|}^{\b_{F},\e_{F}}(h_{\partial_{1}\!F}(g),\ldots,h_{\partial_{{\b_{F}}}\! F}(g)), \, \text{ or } \ p_{|F|}^{\b_{F},\e_{F}}\big(\{w_{i} : C_{i} \subset F\},\{h_{\partial_{i}\!F}(g) : i=1\ldots \x_{F}\}\big).\]
In fact, any graph with a single face would have produced the same number, but proving it would have required an extra argument. In any case, this establishes the formula. 
\end{proof}


\section{Step 2 --- Expansion of heat kernels}  \label{sec:expansionHK}
In this second step of the proof, we will transform the right-hand side of \eqref{eq:integraltocompute} using harmonic analysis on the unitary group, more precisely by expanding the heat kernel~$p_{t}$ and its multivariate analogues in Fourier, or Peter--Weyl series. 

The pieces of the theory of the representations and Fourier analysis on compact Lie groups that we need is described in \cite{Bump,FultonHarris,Faraut}. 

\subsection{Schur functions}\label{sec:SFFS}
Fourier series on $\U(N)$ are indexed by the isomorphism classes of irreducible representations (thereafter {\em irreps}) of $\U(N)$, which are, as we already mentioned in Section \ref{sec:CHW}, in one-to-one correspondence with the set $\HW$ of highest weights. 

The character of the irrep with highest weight $\lambda=(\lambda_{1}\geq\ldots \geq \lambda_{N})$ is the Schur function~$s_{\lambda}$, which on a unitary matrix $x$ with eigenvalues $\xi_{1},\ldots,\xi_{N}$ evaluates, according to the Weyl character formula, as
\begin{equation}\label{eq:defSchur}
s_{\lambda}(x)=\frac{\det \big[(\xi_{i}^{\lambda_{j}+N-j})_{i,j=1\ldots N}\big]}{\det \big[(\xi_{i}^{N-j})_{i,j=1\ldots N}\big]}.
\end{equation}
This expression is a symmetric polynomial function of $\xi_{1},\ldots,\xi_{N}$ which, when evaluated on the identity matrix, yields the dimension $d_{\lambda}$ of the irrep, given by the Weyl formula \eqref{eq:Weyldim}.

\index{sa@$s_{\lambda}$, Schur function}

We gave the expression \eqref{eq:defSchur} of Schur functions for reference because of its importance, but we will not use it in the proof, except to observe the following fact: if $\lambda$ is a highest weight of $\U(N)$, and $q\in \Z$ an integer, then 
\begin{equation}\label{eq:shiftSchur}
s_{\lambda+(q,\ldots,q)}={\det}^{q}s_{\lambda}.
\end{equation} 

Another property of the Schur functions that we will use is the following pair of integral identities, which are consequences of the orthogonality relations for characters of irreps of compact groups : for all $x,y\in \U(N)$ and every highest weight $\lambda$, 
\begin{equation}\label{eq:orthoint}
\int_{\U(N)^{2}}s_{\lambda}([u,v]x)\d u \nsd v=d_{\lambda}^{-2}s_{\lambda}(x) \ \text{ and } \ \int_{\U(N)}s_{\lambda}(xzyz^{-1})\d z =d_{\lambda}^{-1}s_{\lambda}(x)s_{\lambda}(y).
\end{equation}

\subsection{Fourier series of heat kernels} The invariant inner product on $\u(N)$ determines a bi-invariant Riemannian metric on $\U(N)$. The function $s_{\lambda}:\U(N)\to \C$ is an eigenfunction of the Laplace--Beltrami operator of this metric: with the definition of the quadratic Casimir number~$c_{\lambda}$ given by  \eqref{eq:casimir}, we have
\[\Delta s_{\lambda}=-\tfrac{1}{N}c_{\lambda} s_{\lambda}.\]
The Fourier series of the Dirac mass at $I_{N}$ is $\sum_{\lambda} d_{\lambda}s_{\lambda}$ and formally applying the operator $e^{\frac{t}{2}\Delta}$ to this series yields the Fourier expansion of the heat kernel 
\begin{equation}\label{eq:FourierHK}
p_{t}(x)=\sum_{\lambda\in \HW} e^{-\frac{t}{2N} c_{\lambda}} d_{\lambda}s_{\lambda}(x),
\end{equation}
which turns out to converge, for all $\epsilon>0$, uniformly in $x\in \U(N)$ and $t\geq \epsilon$ (this is a special case of \cite[Theorem 4.4]{Liao}).

Combining the expansion \eqref{eq:FourierHK}, the definition of the function $p_{t}^{\b,\e}$ given by \eqref{eq:defpmulti} and the integral relations \eqref{eq:orthoint}, we find that for all $\epsilon>0$, uniformly in $x_{1},\ldots,x_{\b}\in \U(N)$ and $t\geq \epsilon$, 
\begin{equation}\label{eq:FourierHKmulti}
p_{t}^{\b,\e}(x_{1},\ldots,x_{\b})=\sum_{\lambda\in \HW} e^{-\frac{t}{2N} c_{\lambda}} (d_{\lambda})^{\e}s_{\lambda}(x_{1})\ldots s_{\lambda}(x_{\b}).
\end{equation}

\subsection{Flat contribution of a configuration of highest weights} 

\index{F@$\mathscr F(\Lambda)$, flat contribution of $\Lambda$}

According to \eqref{eq:FourierHKmulti}, expanding each heat kernel that appears in the right-hand side of \eqref{eq:integraltocompute} produces a sum over all ways of choosing one highest weight for each face of the graph $\sG$, that is, over configurations of highest weights $\Lambda:\sF\to \HW$. In the case where $\Sigma$ has a boundary, the heat kernels corresponding to faces that contain at least one free boundary component are absent from the second product on the right-hand side of \eqref{eq:integraltocomputewb}. We reintroduce them in the form of the product
\begin{equation}\label{eq:1=1}
\prod_{F\in \sF^{\text{free}}} \prod_{i=1}^{\x_{F}} s_{(0,\ldots,0)}\big(h_{\partial_{i}F}(g)\big),
\end{equation}
which is equal to $1$, because the Schur function $s_{(0,\ldots,0)}$ is identically equal to $1$.

Recall that for each face $F$, the integer $\x_{F}$ is the number of external boundary components of $F$, those which arise from the loop configuration and not from the boundary of $\Sigma$. When $\Sigma$ is closed, all boundary components of a face are external, and $\x_{F}=\b_{F}$.

Let us define the {\em flat contribution} of a configuration of highest weights $\Lambda$ as
\begin{equation}\label{eq:I}
\mathscr F(\Lambda)=\prod_{F\in \sF} (d_{\lambda_{F}})^{\e_{F}}\int_{\U(N)^{\sE}} \Tr(h_{\ell}(g)) \prod_{F\in \sF} \prod_{i=1}^{\x_{F}} s_{\lambda_{F}}\big(h_{\partial_{i}\! F}(g)\big)\d g.
\end{equation}
We arrive at the following intermediate result, where in the case with boundary, we use the notion of configuration of highest weights vanishing at the free boundary, see Definition \ref{def:wellbalanced}.

\begin{proposition} \label{prop:expanded} If $\Sigma$ is closed, then the following equality holds:
\begin{equation}\label{eq:expanded}
Z(\Sigma)\, {\bf E}\Big[\prod_{j=1}^{m} \Tr(H_{\ell_{j}})\Big]=\sum_{\Lambda:\sF\to \HW} e^{-\frac{1}{2N} \underset{F\in \sF}{\sum} |F| c_{\lambda_{F}}} \mathscr F(\Lambda).
\end{equation}
If $\Sigma$ has a boundary, then 
\begin{equation}\label{eq:expandedwb}
Z(\Sigma, \ul{w})\, {\bf E}\Big[\prod_{j=1}^{m} \Tr(H_{\ell_{j}})\Big]=\hspace{-5mm}\sum_{\substack{\Lambda:\sF\to \HW\\ \Lambda \text{ vanishes}\\ \text{at the free boundary}}} \hspace{-5mm}
e^{-\frac{1}{2N} \underset{F\in \sF}{\sum} |F| c_{\lambda_{F}}} \prod_{i=1}^{\k} s_{\lambda_{F_{i}}}(w_{i})\,  \mathscr F(\Lambda).
\end{equation}
\end{proposition}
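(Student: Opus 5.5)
The plan is to start from Proposition \ref{prop:expressiontocompute} and substitute, for each heat kernel $p_{|F|}^{\b_F,\e_F}$ on the right-hand side of \eqref{eq:integraltocompute} or \eqref{eq:integraltocomputewb}, its Fourier expansion \eqref{eq:FourierHKmulti}. Before doing this in the case with boundary, I would multiply the integrand by the product \eqref{eq:1=1}, which equals $1$. Each face $F$ then carries a factor $s_{\lambda_F}$ evaluated on each of its external boundary holonomies. For faces in $\sF^{\text{free}}$ the weight is forced to be $(0,\ldots,0)$, and in that case $d_{\lambda_F}=1$ and $c_{\lambda_F}=0$. This explains why only configurations vanishing at the free boundary appear, and why the factor $e^{-\frac{1}{2N}|F|c_{\lambda_F}}(d_{\lambda_F})^{\e_F}$ for those faces is harmless.

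For a non-free face $F$, the expansion produces
\[
e^{-\frac{|F|}{2N}c_{\lambda_F}}\,(d_{\lambda_F})^{\e_F}\prod_{C_i\subset F}s_{\lambda_F}(w_i)\prod_{i=1}^{\x_F}s_{\lambda_F}\big(h_{\partial_i F}(g)\big).
\]
Taking the product over faces and grouping by configurations $\Lambda:\sF\to\HW$ gives a formal multiple sum. The factors $s_{\lambda_{F_i}}(w_i)$ do not depend on $g$, so they come out of the integral, and collecting the dimension factors gives exactly $\mathscr F(\Lambda)$ as defined in \eqref{eq:I}, with $\Tr(h_\ell(g))$ read as $\prod_j\Tr(h_{\ell_j}(g))$.

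The main obstacle is exchanging the integral over $\U(N)^{\sE}$ with the infinite sum over $\Lambda$. I would justify this by uniform convergence. Each face has $|F|>0$, so by \eqref{eq:FourierHKmulti}, with $\epsilon=\min_F|F|$, each series converges uniformly in its arguments. A finite product of uniformly convergent series of bounded functions converges uniformly as a multiple series, provided we use absolute convergence. For absolute convergence I would bound $|s_\lambda|\le d_\lambda$ and use that $\sum_\lambda e^{-tc_\lambda/2N}d_\lambda^{\,\e+\b}<\infty$, which follows from the Gaussian decay of $e^{-tc_\lambda/2N}$ against the polynomial growth of $d_\lambda$. Here one should note that $c_\lambda$ is bounded below by a positive quadratic form in $\lambda$ up to linear terms.

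The integrand $\prod_j\Tr(h_{\ell_j}(g))$ is bounded by $N^m$ and the Haar measure is finite, so dominated convergence, or simply uniform convergence, allows term-by-term integration. Summing over all $\Lambda$, absolutely convergent, yields \eqref{eq:expanded} and \eqref{eq:expandedwb}.
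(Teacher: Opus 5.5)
Your proposal is correct and follows the paper's route: substitute \eqref{eq:FourierHKmulti} into Proposition~\ref{prop:expressiontocompute}, reinsert the free faces through the trivial product \eqref{eq:1=1}, and collect the factors into $\mathscr F(\Lambda)$. Your domination bound $e^{-tc_\lambda/2N}d_\lambda^{\e+\b}$ makes explicit the interchange of the infinite sum and the integral, which the paper leaves implicit by relying on the uniform convergence of \eqref{eq:FourierHKmulti}.
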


Our next goal, which will occupy us for most of what remains of the proof, is to compute the flat contribution of a configuration. A crucial point, that we will prove later, is that only balanced configurations (in the sense of Definition \ref{def:wellbalanced}) can have a non-zero flat contribution. For now, it is already possible to see without computing them that many configurations have equal flat contributions. 

\subsection{Non-negative configurations}
For every configuration of highest weights $\Lambda$, and every integer $q\in \Z$, let us define a new configuration $\Lambda+(q,\ldots,q)$ by setting, for each face $F$,
\[(\Lambda+(q,\ldots,q))(F)=\lambda_{F}+(q,\ldots,q).\]

\begin{lemma} \label{lem:translate} For all configuration of highest weights $\Lambda$ and all $q\in \Z$, we have
\[\mathscr F(\Lambda+(q,\ldots,q))=\mathscr F(\Lambda).\]
\end{lemma}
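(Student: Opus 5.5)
Shifting every highest weight by $(q,\ldots,q)$ leaves each dimension unchanged and multiplies each Schur function by a power of the determinant (by \eqref{eq:shiftSchur}). The plan is to show that, inside the integral defining $\mathscr F$, the total power of the determinant attached to each edge variable is zero. Two facts make the prefactor harmless. First, $d_{\lambda+(q,\ldots,q)}=d_{\lambda}$, which is clear from the Weyl dimension formula \eqref{eq:Weyldim}, since it only involves the differences $\lambda_i-\lambda_j$. Hence the prefactor $\prod_F (d_{\lambda_F})^{\e_F}$ in \eqref{eq:I} is unchanged. Second, by \eqref{eq:shiftSchur}, $s_{\lambda_F+(q,\ldots,q)}(h_{\partial_i F}(g))=\det(h_{\partial_i F}(g))^{q}\, s_{\lambda_F}(h_{\partial_i F}(g))$. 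The integrand of $\mathscr F(\Lambda+(q,\ldots,q))$ is therefore that of $\mathscr F(\Lambda)$ multiplied by
\[
\prod_{F\in\sF}\prod_{i=1}^{\x_F}\det\big(h_{\partial_i F}(g)\big)^{q}.
\]
It suffices to show that this product is identically $1$ on $\U(N)^{\sE}$.

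Since $\det$ is a homomorphism to an abelian group, $\det h_c(g)=\prod_{e}\det(g_e)^{n_e(c)}$ for any loop $c$, where $n_e(c)$ is the signed number of traversals of $e$ by $c$. The claim thus reduces to a combinatorial statement: for each edge $e\in\sE$, the total signed number of traversals of $e$ by the family of loops $\{\partial_i F : F\in\sF,\ 1\le i\le \x_F\}$ is zero. This is where the blow-up description is used. The external boundary components of the surfaces $\bar F$, projected to $\Sigma$, traverse each edge $e$ exactly twice: once as part of the boundary of $\bar{\lf_e}$ and once as part of the boundary of $\bar{\rf_e}$. Because each $\partial_i F$ bounds $F$ positively, the two traversals have opposite orientations; $e$ is traversed positively on one side and negatively on the other. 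This holds even when $\lf_e=\rf_e$, since then the two sides still correspond to two distinct boundary arcs of $\bar\Sigma$. It also holds for circular edges, which contribute two boundary circles with opposite orientations. Internal boundary components of $\partial\Sigma$ do not appear among the $\partial_iF$, and faces in $\sF^{\text{free}}$ are included via \eqref{eq:1=1}, so every edge gets both of its sides counted. Hence $\sum_{F,i} n_e(\partial_i F)=0$ for every $e$.

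The only step needing care is this orientation bookkeeping: checking that every side of every edge, including circular edges and edges with the same face on both sides, appears exactly once among the external boundary loops, with opposite orientations on the two sides. I would do it by working in $\bar\Sigma$, where each edge lifts to two boundary arcs with opposite induced orientations. Once this is in place, the extra factor is $1$, the integrals coincide, and $\mathscr F(\Lambda+(q,\ldots,q))=\mathscr F(\Lambda)$.
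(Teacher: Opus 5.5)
Your proof is correct and follows the same route as the paper. By \eqref{eq:shiftSchur} the shift multiplies each Schur factor by $\det^{q}$ of its argument, and these factors cancel because each edge variable appears once as $g_e$ (in a boundary loop of $\lf_e$) and once as $g_e^{-1}$ (in a boundary loop of $\rf_e$). You also state explicitly that the prefactor is invariant, since $d_{\lambda+(q,\ldots,q)}=d_\lambda$; the paper leaves this implicit in its proof of this lemma and only records it later, in the proof of Proposition \ref{prop:eqI6}.
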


\begin{proof} In the integral \eqref{eq:I} defining the flat contribution of a configuration of highest weights, each Schur function is evaluated at a word in the integration variables $g_{1},\ldots,g_{r}$ and their inverses. The main observation is that each integration variable appears exactly twice in the union of these words, once as itself and once as its inverse.
 
Indeed, choose $j\in \{1,\ldots,r\}$ and consider the variable $g_{j}$, which corresponds to the edge $e_{j}$. This variable appears once as $g_{j}$ in a loop induced by the boundary of the face $\lf_{e_{j}}$, the face located on the left of $e_{j}$, and once as~$g_{j}^{-1}$ in a loop induced by the boundary of  $\rf_{e_{j}}$.

According to \eqref{eq:shiftSchur}, shifting all the highest weights of the configuration $\Lambda$ by $q$ multiplies each Schur function by the $q$-th power of the determinant of its argument. By the observation just made, the product of Schur functions in \eqref{eq:I} is the same function on $\U(N)^{r}$ whether we are computing $\mathscr F(\Lambda)$ or $\mathscr F(\Lambda+(q\ldots,q))$. 
\end{proof}

Let us call {\em non-negative} a highest weight $\lambda=(\lambda_{1}\geq \ldots \geq \lambda_{N})$ such that $\lambda_{N}\geq 0$, and {\em non-negative} a configuration of highest weights which to each face associates a non-negative weight.

It follows from Lemma \ref{lem:translate}, by taking $q$ large enough, that each configuration of highest weights has the same flat contribution as a non-negative configuration. It is thus sufficient for the time being to compute the flat contribution of non-negative configurations. This assumption will be lifted at the end of the proof, in Proposition \ref{prop:eqI6}.


\section{Step 3 --- Expression of Schur functions using permutations}  \label{sec:Schurusingperm}
In the computation of the flat contribution of a non-negative configuration of highest weights, we will use the Schur--Weyl correspondence between irreps of the unitary and symmetric groups to make symmetric quantities appear in the computation.  A general reference that contains the results about the Schur--Weyl duality that we need is \cite{GoodmanWallach}.

\subsection{Schur--Weyl duality}
Let us fix an integer $n\geq 1$. The unitary group $\U(N)$ and the symmetric group $\S_{n}$ act on the vector space $(\C^{N})^{\otimes n}$ by setting, for all $x\in \U(N)$, all $\sigma\in \S_{n}$, and all $v_{1},\ldots,v_{n}\in \C^{N}$,  
\begin{equation}\label{eq:action}
 x (v_{1}\otimes \ldots \otimes v_{n})=(xv_{1})\otimes \ldots \otimes (xv_{n}) \ \text{ and } \ \sigma (v_{1}\otimes \ldots \otimes v_{n})=v_{\sigma^{-1}(1)}\otimes \ldots \otimes v_{\sigma^{-1}(n)}.
\end{equation}
We choose not to name the maps from $\U(N)$ and $\S_{n}$ respectively to $\GL\big((\C^{N})^{\otimes n}\big)$, and to write the action of a unitary matrix or a permutation on a vector simply by the juxtaposition of symbols. There will be a few exceptions to this rule, and they will be duly indicated.

It follows from their definitions that the actions of $\U(N)$ and $\S_{n}$ just defined commute to each other. Therefore, the two sub-algebras of $\End((\C^{N})^{\otimes n})$ respectively generated by each action commute to each other. 
The core of Schur--Weyl duality is the much stronger fact that these two sub-algebras are each other's commutant. This has many implications, notably a close correspondence between the irreps of the symmetric and unitary groups, of which we will make extensive use.

In order to describe this correspondence, for every highest weight $\lambda$, let us denote by $V_{\lambda}$ a $\lambda$-irrep of $\U(N)$. The vector space $V_{\lambda}$ has dimension~$d_{\lambda}$ and the character of the action of $\U(N)$ on $V_{\lambda}$ is the Schur function $s_{\lambda}$.

Then, recall that the isomorphism classes of irreps of the symmetric group $\S_{n}$ are classically parametrised by the set of partitions of the integer $n$, see for example \cite{Sagan}. For each partition~$\lambda$ of $n$, let us denote by $V^{\lambda}$ a $\lambda$-irrep of $\S_{n}$. Let $d^{\lambda}$ denote the dimension of $V^{\lambda}$, and $\chi^{\lambda}$ the character of this representation. 
\index{Vl1@$V_{\lambda}$, irreducible $\U(N)$-module}
\index{Vl2@$V^{\lambda}$, irreducible $\S_{n}$-module}
\index{czhila@$\chi^{\lambda}$, character of an irreducible $\S_{n}$-module}

Note that we will consistently use subscripts for unitary irreps and superscripts for symmetric irreps. The unitary irreps $V_{\lambda}$ are indexed by highest weights $\lambda\in \HW$ and the symmetric irreps~$V^{\lambda}$ by partitions $\lambda\vdash n$. These are not the same sets, and their intersection is the set of partitions of the integer $n$ with at most $N$ parts, that we denote by $\Part_{n,N}$.

The essence of the Schur--Weyl duality is summarised in the existence of an isomorphism of $\U(N)\times \S_{n}$-modules
\begin{equation}\label{eq:SW}
(\C^{N})^{\otimes n} \simeq \bigoplus_{\lambda\in \Part_{n,N}} V_{\lambda}\otimes V^{\lambda}.
\end{equation}

For each partition $\lambda$ of $n$, let us introduce
\begin{equation}\label{eq:defpi}
\pi^{\lambda}=\frac{d^{\lambda}}{n!}\sum_{\sigma\in \S_{n}} \chi^{\lambda}(\sigma) \sigma,
\end{equation}
the element of the group algebra $\C[\S_{n}]$ that acts in every representation of $\S_{n}$ as the projector on the $\lambda$-isotypical component. We will make repeated use of the fact that the projectors $\pi^{\lambda}$ are mutually orthogonal idempotents of the group algebra $\C[\S_{n}]$, in the sense that for all partitions~$\lambda$ and $\mu$ of $n$,
\begin{equation}\label{eq:projorth}
\pi^{\lambda}\pi^{\mu}=\delta_{\lambda\mu} \pi^{\lambda}.
\end{equation}
Moreover, they belong to the centre of $\C[\S_{n}]$, which is to say that they commute with every element of~$\S_{n}$. Finally, because the characters of the symmetric group are real-valued, the coefficient of $\sigma$ and $\sigma^{-1}$ are equal in $\pi^{\lambda}$, for all partition $\lambda$ of $n$ and all $\sigma\in \S_{n}$.

\index{pzil@$\pi^{\lambda}$, isotypical symmetric projector}

Letting $\pi^{\lambda}$ act on $(\C^{N})^{\otimes n}$ projects on the $\lambda$-isotypical component for $\S_{n}$, that is, on $V_{\lambda}\otimes V^{\lambda}$, which is also the $\lambda$-isotypical component for $\U(N)$. For every $x\in \U(N)$, the trace of the action of $x$ on this isotypical component is equal to $d^{\lambda}s_{\lambda}(x)$, so that 
\begin{equation}\label{eq:schurastrace}
s_{\lambda}(x)=\frac{1}{d^{\lambda}}\Tr_{(\C^{N})^{\otimes n}}\big(x \pi^{\lambda}\big).
\end{equation}
To be clear, in this formula, $x\pi^{\lambda}$ denotes the product, in the space of endomorphisms of $(\C^{N})^{\otimes n}$, of the action of $x\in \U(N)$ and that of $\pi^{\lambda}\in \C[\S_{n}]$. 

The relation \eqref{eq:schurastrace} will be for us the main bridge between unitary and symmetric quantities, and will allow us to express the flat contribution of a configuration of highest weights in combinatorial terms. 

\subsection{The integrand as a trace}
Starting from a gauge configuration $g$ and a configuration of highest weights $\Lambda$ on the graph~$\sG$ induced by the loop configuration $\ul\ell$, we will construct three endomorphisms ${\sf I}(g)$, ${\sf \Pi}(\Lambda)$ and ${\sf X}(\Lambda)$ of a same vector space, that we will specify shortly, with the property that the trace of the product of these endomorphisms is equal, up to a mild correction, to the function of the gauge configuration $g$ of which the integral over the configuration space~$\U(N)^{r}$ defines the flat contribution of the highest weight configuration $\Lambda$ (see \eqref{eq:I} for the definition of the flat contribution and \eqref{eq:I2} for the expression to come).


Let us start by defining a family of vector spaces, namely, for all integers $n,m\geq 0$, the space
\[T^{n}_{m}=(\C^{N})^{\otimes n}\otimes \big((\C^{N})^{*}\big)^{\otimes m}.\]

Let us choose and fix a non-negative configuration of highest weights $\Lambda$. For each face $F$ of the graph $\sG$, we regard the non-negative highest weight $\lambda_{F}=(\lambda_{F,1}\geq \ldots \geq \lambda_{F,N}\geq 0)$ as a partition of the integer $n_{F}=\lambda_{F,1}+ \ldots + \lambda_{F,N}$. 

For each edge $e\in \sE$, recall that we denote respectively by $\lf_{e}$ and $\rf_{e}$ the faces of the graph located on the left and on the right of $e$. Let us define the integers $l_{e}=n_{\lf(e)}$ and $r_{e}=n_{\rf(e)}$, which depend on the configuration $\Lambda$. Let us also associate to the edge $e$ the vector space
\[\T_{e}=T^{l_{e}+1}_{r_{e}}=(\C^{N})^{\otimes l_{e}}\otimes \C^{N}\otimes \big((\C^{N})^{*}\big)^{\otimes r_{e}}.\]
In the product of three factors on the right-hand side, the first one corresponds, in a sense that will become clear in the following, to the face on the left of $e$, the middle one to the edge itself, and the right one to the face on the right of $e$.

\index{Te@$\T_{e}$, vector space associated to the edge $e$}
\index{re@$r_{e}$, sum of the components of $\lambda_{\rf_{e}}$}
\index{le@$l_{e}$, sum of the components of $\lambda_{\lf_{e}}$}

The vector space of which we are going to construct two (actually, three) endomorphisms is the tensor product $\bigotimes_{e\in \sE} \T_{e}$. 

To construct endomorphisms of this space, we will let the unitary and symmetric groups act on tensor powers of the dual space $(\C^{N})^{*}$. In the symmetric case, the contragredient of the action of $\S_{n}$ on $T_{0}^{n}$ is the action of $\S_{n}$ on $T_{n}^{0}$ given by setting, for all $\sigma \in \S_{n}$ and all $\phi_{1},\ldots,\phi_{n}\in \C^{N}$, 
\[\sigma^{\vee}(\phi_{1}\otimes \ldots \otimes \phi_{n})=\phi_{\sigma^{-1}(1)}\otimes \ldots \otimes \phi_{\sigma^{-1}(n)}.\] 
Since this action is given by the same formula as that defining the action of $\S_{n}$ on $T^{N}_{0}$, we will not use the notation $\sigma^{\vee}$ to distinguish these actions.

In the unitary case, for every $x\in \U(N)$, let us denote by $x^{\vee}={}^{t}(x^{-1})$ the image of $x$ by the contragredient of the natural representation of $\U(N)$. If $\psi\in T_{1}^{0}=(\C^{N})^{*}$, then by definition, $x^{\vee}\psi=\psi \circ x^{-1}$. In matricial terms, $x^{\vee}$ is the complex conjugate of $x$. The action of $\U(N)$ on~$T_{n}^{0}$ is simply the $n$-th tensor power of this action on $T_{1}^{0}$.

\index{xvee@$x^{\vee}$, complex conjugate of the unitary matrix $x$}

\subsection{The first two endomorphisms} For each edge $e\in \sE$, let us define 
\begin{equation}\label{eq:defIe}
{\sf I}_{e}(x)=x^{\otimes (l_{e}+1)}\otimes (x^{\vee})^{\otimes r_{e}} \in {\rm End}(\T_{e}).
\end{equation}

Given a gauge configuration $g=(g_{e})_{e\in \sE}$, let us define the first endomorphism by setting
\begin{equation}\label{eq:defI}
{\sf I}(g)=\bigotimes_{e\in \sE} {\sf I}_{e}(g_{e}) \ \in \ {\rm End}\big(\bigotimes_{e\in \sE} \T_{e}\big).
\end{equation}

Let us also define, for each edge $e\in \sE$,
\begin{equation}\label{eq:defPie}
{\sf \Pi}_{e}(\Lambda)= \pi^{\lambda_{\lf_{e}}}\otimes {\rm id}_{T^{1}_{0}} \otimes \pi^{\lambda_{\rf_{e}}} \ \in \ {\rm End}(\T_{e}),
\end{equation}
and set
\begin{equation}\label{eq:defPi}
{\sf \Pi}(\Lambda)=\bigotimes_{e\in \sE} {\sf \Pi}_{e}(\Lambda) \ \in \ {\rm End}\big(\bigotimes_{e\in \sE} \T_{e}\big).
\end{equation}

These definitions are quite simple, but some more complicated are coming, and it is a fact that computations involving large tensor products can quickly become impractical. A purely literal approach is possible, but especially for the problem at hand, which is $2$-dimensional in nature, it seems that a diagrammatic approach, which uses the two dimensions of the sheet of paper, allows for much clearer and easier calculations.

Let us take the definitions above as an opportunity to introduce the graphical representation of tensors, in the style of Penrose diagrams, that we will use in many of the computations to come. 

In these diagrams, a tensor is represented by a box, labelled by the name of the tensor, to which oriented strands attached, some incoming and some outgoing. Tensor product of tensors is represented by the mere juxtaposition of boxes, so that a diagram usually consists of a combination of several boxes. It can also contain strands without boxes.

A diagram is to be looked at up to local deformation : the exact shape of boxes and strands does not matter. For the sake of computational convenience, some strands are drawn above the box and some below, but this does not bring any additional information. 

The meaning of the strands depends on the situation, but for the time being, each strand represents a factor $\C^{N}$ or $(\C^{N})^{*}$. More specifically, an element of $T^{n}_{m}$ is represented by a box with~$m$ incoming and $n$ outgoing strands. As the special case of this rule when $m=n=0$, a box without any strand represents a scalar. Another special case is when $m=n=1$, and in this case the box represents an element of $T^{1}_{1}$, that is, an endomorphism of $\C^{N}$.

The last rule is that the contraction of a pair $\C^{N}\otimes (\C^{N})^{*}$ is represented by connecting the two corresponding strands, in a way that respects their orientation. For example, connecting the two strands of the diagram representing an endomorphism of $\C^{N}$ yields a diagram without any incoming or outgoing strand, that represents a scalar, the trace of the endomorphism.

We said that a diagram can contain strands without a box: each such strand stands for the canonical element of $T^{1}_{1}=(\C^{N})^{*}\otimes \C^{N}\simeq \End(\C^{N})$, namely the identity of $\C^{N}$. One could say that the box is implied, with the canonical label ${\rm id}_{\C^{N}}$. 

Before referring to Figure \ref{fig:edgetensor} for a graphical representation of the tensors ${\sf I}(g)$ and ${\sf \Pi}(\Lambda)$, let us make three more remarks. 

Firstly, for any vector space $V$, always finite-dimensional, we will frequently and tacitly make the identification $\End(V)\simeq V^{*}\otimes V$. Secondly, from the point of view of this identifications, there is no essential difference between an endomorphism $u$ of $V$ and the transposed endomorphism~${}^{t}u$ of $V^{*}$. Indeed, ${}^{t}u$ is obtained from $u$ by the chain of isomorphisms
\[\End(V)\simeq V^{*}\otimes V\simeq V\otimes V^{*} \simeq \End(V^{*}).\]
Therefore, there is no essential difference between a box labelled $x^{-1}$ for some $x\in \U(N)$, with one incoming and one outgoing strand, and the same box with the same strands labelled $x^{\vee}$. Only the perspective changes, as the first is rather seen as an endomorphism of $\C^{N}$, and the second as an endomorphism of $(\C^{N})^{*}$. 

Thirdly, we use the graphical simplification of writing a thick strand instead of a large number of strands. We will sometimes use several levels of thickness in the same picture, and will then say precisely which thickness corresponds to which number of strands. 

\begin{figure}[h!]
\begin{center}
\includegraphics{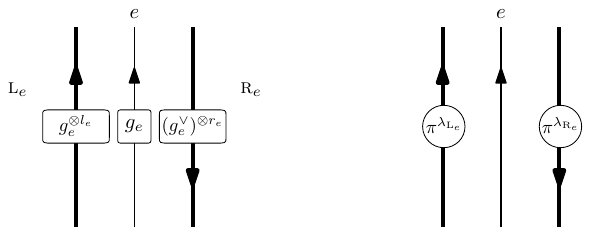}
\caption{\label{fig:edgetensor} \small On the left, a graphical representation of the endomorphism ${\sf I}_{e}(g)$ of $\T_{e}$. On the right, the factor $\pi^{\lambda_{\lf_{e}}}\otimes {\rm id}_{T^{1}_{0}} \otimes \pi^{\lambda_{\rf_{e}}}$ corresponding to the edge $e$ in ${\sf \Pi}(\Lambda)$. The arrows indicate the orientation of the edge $e$, and of the boundaries of the faces on the left and on the right of $e$. By analogy with the left part of the picture, one would expect to have, instead of the projector $\pi^{\lambda_{\rf_{e}}}$, its image by the endomorphism of $\C[\S_{r_{e}}]$ which sends each permutation to its inverse, but this endomorphism leaves $\pi^{\lambda_{\rf_{e}}}$ invariant.}
\end{center}
\end{figure}

These conventions being established, and before going further, let us make a comment about the definition \eqref{eq:defPi} of the endomorphism ${\sf \Pi}(\Lambda)$. 
In the definition \eqref{eq:defIe} of ${\sf I}_{e}(g)$, the rightmost strand carries the contragredient representation of $x^{\otimes r_{e}}$.
We explained a few lines above that this is the same object, only looked at from a different perspective, as the natural representation of $(x^{-1})^{\otimes r_{e}}$. It may therefore seem inconsistent with the orientation of the strands, as well as with the definition of ${\sf I}_{e}(g)$, that in the definition \eqref{eq:defIe} of ${\sf I}_{e}(g)$, the rightmost strand carries the action of $\pi^{\lambda_{\rf_{e}}}$ on $T_{0}^{r_{e}}$, instead of its image by the endomorphism of $\C[\S_{r_{e}}]$ which sends each permutation $\sigma$ to its inverse. However, according to a comment made shortly after~\eqref{eq:defpi}, the projectors $\pi^{\lambda}$ are invariant by this endomorphism, and it makes no difference to take one or the other.

\subsection{The third endomorphism}
We will now construct the endomorphism ${\sf X}(\Lambda)$, which depends on the configuration of highest weights $\Lambda$, but not on the gauge configuration $g$. It is a tensor product over all vertices $v\in \sV$ of a tensor ${\sf X}_{v}(\Lambda)$ that we will now define.

Let us consider a vertex $v$. It is an endpoint of four edges, that are not necessarily distinct, and that we denote, extending the metaphor of cardinal points, by $\cne_{v}$, $\cnw_{v}$, $\csw_{v}$ and $\cse_{v}$, as shown below in Figure~\ref{fig:cardinaledges}. The first two edges are outgoing, the last two are incoming, and they are incident at $v$ in this cyclic order. Recall that we already agreed to denote, in the natural way given the labelling of the edges, the four faces incident to $v$ by $\cn_{v}$, $\cw_{v}$, $\cs_{v}$ and $\ce_{v}$. For the sake of clarity, we will drop the subscript $v$ from the names of faces and edges.

We will define the tensor ${\sf X}_{v}(\Lambda)$ as an element of ${\rm Hom}(\T_{\csw}\otimes \T_{\cse},\T_{\cnw}\otimes \T_{\cne})$. For this, we will explain how this endomorphism acts on an element of
\[\T_{\csw}\otimes \T_{\cse}=(T^{n_{\cw}}_{0}\otimes T_{0}^{1}\otimes T^{0}_{n_{\cs}})\otimes (T^{n_{\cs}}_{0}\otimes T_{0}^{1}\otimes T^{0}_{n_{\ce}})
\]
and produces an element of 
\[\T_{\cnw}\otimes \T_{\cne}=(T^{n_{\cw}}_{0}\otimes T_{0}^{1}\otimes T^{0}_{n_{\cn}})\otimes (T^{n_{\cn}}_{0}\otimes T_{0}^{1}\otimes T^{0}_{n_{\ce}}).\]

A graphical representation of the action of ${\sf X}_{v}(\Lambda)$, in the style of Penrose diagrams introduced in the previous section, is given in Figure \ref{fig:cardinaledges} below. 

\begin{figure}[h!]
\begin{center}
\includegraphics{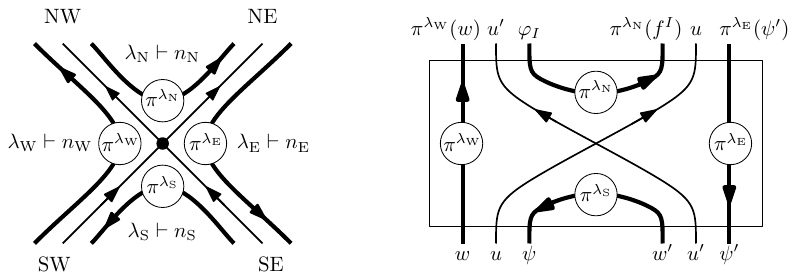}
\caption{\label{fig:cardinaledges} \small Two graphical representations of the endomorphism ${\sf X}_{v}(\Lambda)$.
}
\end{center}
\end{figure}

Let us give a more formal description. Let us choose arbitrary tensors \[w\in T^{n_{\cw}}_{0}, \ u\in T^{1}_{0}, \ \psi \in T^{0}_{ n_{\cs}}, \ w'\in T_{0}^{n_{\cs}}, \ u'\in T^{1}_{0}, \ \psi' \in T^{0}_{n_{\ce}}\]
and write the image of their tensor product by ${\sf X}_{v}(\Lambda)$. To do this, we will use the identification $T^{0}_{n_{\cn}}\otimes T^{n_{\cn}}_{0}\simeq \End(T^{n_{\cn}}_{0})=\End((\C^{N})^{\otimes n_{\cn}})$ and write ${\sf X}_{v}(\Lambda)(w\otimes u \otimes \psi \otimes w'\otimes  u' \otimes \psi')$ as an element of the space
\[T^{n_{\cw}}_{0}\otimes T_{0}^{1}\otimes \End(T^{n_{\cn}}_{0})\otimes T_{0}^{1}\otimes T^{0}_{n_{\ce}} \simeq \T_{\cnw}\otimes \T_{\cne}.\]
This being said, ${\sf X}_{v}(\Lambda)$ acts on $w\otimes u \otimes \psi \otimes w'\otimes  u' \otimes \psi'$ by doing four things:
\begin{enumerate}[\indent \sbullet]
\item apply $\pi^{\lambda_{\cw}}$ on $w$ and  $\pi^{\lambda_{\ce}}$ on $\psi'$,
\item exchange $u$ and $u'$,
\item apply $\pi^{\lambda_{\cs}}$ on $w'$ and evaluate $\psi$ on the result to produce a scalar,
\item replace $\psi\otimes w'$ by the endomorphism $\pi^{\lambda_{\cn}}$ of $T_{0}^{n_{\cn}}$.
\end{enumerate}
In one line, this can be written as
\[{\sf X}_{v}(\Lambda)(w\otimes u \otimes \psi \otimes w'\otimes  u' \otimes \psi')=\big(\psi,\pi^{\lambda_{\cs}}w'\big) \ (\pi^{\lambda_{\cw}}w)\otimes u' \otimes \pi^{\lambda_{\cn}} \otimes u \otimes (\pi^{\lambda_{\ce}}\psi').\]
Here, the first bracket on the right-hand side indicates the pairing of the linear form $\psi$ with the vector $\pi^{\lambda_{\cs}}w'$.

Let us give another, less compact, but perhaps more explicit expression of ${\sf X}_{v}(\Lambda)$. Let us denote by $(f^{1},\ldots,f^{N})$ the canonical basis of $T^{1}_{0}=\C^{N}$, by $(\varphi_{1},\ldots,\varphi_{N})$ the dual basis of $T^{0}_{1}$ and, for every multi-index $I\in \{1,\ldots,N\}^{n_{\cn}}$, by $f^{I}$ and $\varphi_{I}$ the corresponding elements of $T^{n_{\cn}}_{0}$ and $T_{n_{\cn}}^{0}$ respectively. Then, with the notation $\pi^{\lambda}(\sigma)$ for the coefficient of $\sigma$ in $\pi^{\lambda}$,
\begin{align*}
{\sf X}_{v}(\Lambda)(w\otimes u \otimes \psi \otimes w'\otimes  u' \otimes \psi')
&=\sum_{\sigma_{\cw},\sigma_{\cs},\sigma_{\ce}, \sigma_{\cn}, I } \pi^{\lambda_{\cw}}(\sigma_{\cw})\pi^{\lambda_{\cs}}(\sigma_{\cs})\pi^{\lambda_{\ce}}(\sigma_{\ce})\pi^{\lambda_{\cn}}(\sigma_{\cn})\\
&\hspace{1cm} \big(\psi,\sigma_{\cs}(w')\big)\ \ 
\sigma_{\cw} (w)\otimes u'\otimes  \varphi_{I} \otimes \sigma_{\cn}(f^{I}) \otimes u \otimes \sigma_{\ce} (\psi')
\end{align*}
where the sum runs over $(\sigma_{\cw},\sigma_{\cs},\sigma_{\ce}, \sigma_{\cn})\in \S_{n_{\cw}}\times \S_{n_{\cs}}\times \S_{n_{\ce}}\times \S_{n_{\cn}}$ and $I\in \{1,\ldots,N\}^{n_{\cn}}$.
  
Finally, we define 
\[{\sf X}(\Lambda)=\bigotimes_{v\in \sV} {\sf X}_{v}(\Lambda) \ \in \  \bigotimes_{e\in \sE}(\T_{e}^{*}\otimes \T_{e}) \simeq {\rm End}\big(\bigotimes_{e\in \sE} \T_{e}\big),\]
where the isomorphism is given by a mere reorganisation of the factors.

\subsection{Computation of the trace}
Let us now compute the trace $\Tr\big[{\sf X}(\Lambda){\sf \Pi}(\Lambda){\sf I}(g)\big]$ of the composition of the three endomorphisms just defined.

The composition ${\sf \Pi}(\Lambda){\sf I}(g)$ can be computed edge by edge, as
\[{\sf \Pi}(\Lambda){\sf I}(g)=\bigotimes_{e\in \sE} {\sf \Pi}_{e}(\Lambda)  {\sf I}_{e}(g),\]
that is, graphically, by piling the right-hand side of Figure \ref{fig:edgetensor} on top of the left-hand side of the same figure. Note that the order is unimportant, as the two endomorphisms of $\T_{e}$ commute.

Let us now compose ${\sf X}(\Lambda)$ with the resulting endomorphism of $\bigotimes_{e\in \sE} \T_{e}$. For this, we consider both ${\sf X}(\Lambda)$ and ${\sf \Pi}(\Lambda){\sf I}(g)$ as elements of $\bigotimes_{e\in \sE}(\T_{e}^{*}\otimes \T_{e}) $ and contract all pairs of factors that are dual spaces attached to the same edge, or more properly to the same half-edge, of the graph. 

At the initial point and at the final point of each linear edge $e$, there are three pairs of tensors to contract, with one factor stemming from ${\sf X}(\Lambda)$ and one from ${\sf \Pi}(\Lambda){\sf I}(g)$ in each pair. 

Circular edges yield a different, but simpler, contribution: for each circular edge $e$, the six strands of the tensor ${\sf \Pi}(\Lambda){\sf I}_{e}(g)$ are contracted pairwise.  

The result of these contractions is best understood graphically, as one follows the lines shown in Figures \ref{fig:edgetensor} and $\ref{fig:cardinaledges}$. By doing this, we arrive at the following product of traces.
\begin{enumerate}[\sbullet]
\item For each loop $\ell_{j}$ of the loop configuration, we obtain the trace of the discrete holonomy of the gauge configuration $g$ along this loop, that is, $\Tr(h_{\ell_{j}}(g))$. This is a trace on $\C^{N}$.
\item For each face $F$, and each loop $\partial_{i}F$ covering an external boundary component of $F$, we obtain the trace of the product of the edge variables of $g$ associated with the edges of $\partial_{i}F$, with an occurrence of~$\pi^{\lambda_{F}}$ between each pair of successive variables. Thus, if $\partial_{i} F=e_{j_{1}}^{\epsilon_{1}}\ldots e_{j_{k}}^{\epsilon_{k}}$, we get 
\[\Tr_{(\C^{N})^{\otimes n_{F}}}\big(g_{j_{k}}^{\epsilon_{k}} \pi^{\lambda_{F}}\ldots g_{j_{2}}^{\epsilon_{2}}\pi^{\lambda_{F}} g_{j_{1}}^{\epsilon_{1}}\pi^{\lambda_{F}}\big).\]
The presence of ${\sf \Pi}(\Lambda)$ ensures that this formula also holds for circular edges, although they do not meet a vertex. 

Now, since the unitary and symmetric actions on $(\C^{N})^{\otimes n_{F}}$ commute, and thanks to \eqref{eq:schurastrace}, this is equal to 
\[\Tr_{(\C^{N})^{\otimes n_{F}}}\big(g_{j_{k}}^{\epsilon_{k}} \ldots g_{j_{1}}^{\epsilon_{1}}\pi^{\lambda_{F}}\big)=\Tr_{(\C^{N})^{\otimes n_{F}}}\big(h_{\partial_{i} F}(g)\pi^{\lambda_{F}}\big)=d^{\lambda_{F}}s_{\lambda_{F}}\big(h_{\partial_{i} F}(g)\big).\]
\end{enumerate}

The outcome of this discussion is the equality
\[\Tr\big[{\sf X}(\Lambda){\sf \Pi}(\Lambda){\sf I}(g)\big]= \prod_{j=1}^{m}\Tr(h_{\ell_{j}}(g)) \prod_{F\in \sF} \prod_{i=1}^{\x_{F}} d^{\lambda_{F}}s_{\lambda_{F}}\big(h_{\partial_{i} F}(g)\big)\]
which, combined with the definition \eqref{eq:I} of $\mathscr F(\Lambda)$, yields
\begin{equation}\label{eq:I2}
\mathscr F(\Lambda)=\prod_{F} \frac{(d_{\lambda_{F}})^{\e_{F}}}{(d^{\lambda_{F}})^{\x_{F}}} \int_{\U(N)^{\sE}} \Tr\big[{\sf X}(\Lambda){\sf \Pi}(\Lambda){\sf I}(g)\big]\d g.
\end{equation}

Although we did not present it in this way, our rewriting of the integrand amounts exactly to recognising that it is, as a function of the gauge configuration $g$, a {\em spin network}. See \cite{Baez} for a general presentation of spin networks, and \cite{LevySN} for their use in a context related to the present work. The importance of this rewriting for us stems from the fact that it decouples the contributions of the edge variables, and will allow us to perform the integration. 

%

\begin{proposition}\label{prop:I3}
The flat contribution of a non-negative configuration of highest weights $\Lambda$ is given by
\begin{equation}\label{eq:I3}
\mathscr F(\Lambda)=\prod_{F\in \sF} \frac{(d_{\lambda_{F}})^{\e_{F}}}{(d^{\lambda_{F}})^{\x_{F}}} \ \Tr \bigg[\, {\sf X}(\Lambda)\circ  \bigotimes_{e\in \sE} \Big(\big(
\pi^{\lambda_{\lf_{e}}}\otimes {\rm id}_{T^{1}_{0}} \otimes \pi^{\lambda_{\rf_{e}}}
\big) \circ \int_{\U(N)} {\sf I}_{e}(x)\d x\Big)\bigg].
\end{equation}
\end{proposition}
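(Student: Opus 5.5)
The plan is to start from the identity \eqref{eq:I2}, which has already been established, and to push the integral over $\U(N)^{\sE}$ inside the trace. Three structural features make this possible. First, by \eqref{eq:defI}, ${\sf I}(g)=\bigotimes_{e\in\sE}{\sf I}_{e}(g_{e})$ is a tensor product of factors, each depending on a single edge variable. Second, the measure $\nsd g$ on $\U(N)^{\sE}$ is the product of the normalised Haar measures on the factors. Third, neither ${\sf X}(\Lambda)$ nor ${\sf \Pi}(\Lambda)$ depends on $g$.

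The first step is linearity. The map $A\mapsto \Tr[{\sf X}(\Lambda){\sf \Pi}(\Lambda)A]$ is a linear functional on the finite-dimensional space $\End\big(\bigotimes_{e}\T_{e}\big)$, and $g\mapsto {\sf I}(g)$ is continuous and hence integrable entrywise. We may therefore exchange integral and trace:
\[\int \Tr\big[{\sf X}(\Lambda){\sf \Pi}(\Lambda){\sf I}(g)\big]\d g=\Tr\Big[{\sf X}(\Lambda){\sf \Pi}(\Lambda)\int {\sf I}(g)\d g\Big].\]

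The second step factorises the integral over edges. In coordinates, a matrix entry of $\bigotimes_{e}{\sf I}_{e}(g_{e})$ is a product over $e$ of matrix entries of ${\sf I}_{e}(g_{e})$, and each of these depends only on $g_{e}$. By Fubini for the product measure,
\[\int_{\U(N)^{\sE}}\bigotimes_{e}{\sf I}_{e}(g_{e})\d g=\bigotimes_{e}\int_{\U(N)}{\sf I}_{e}(x)\d x.\]

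The third step recombines this with ${\sf \Pi}(\Lambda)=\bigotimes_{e}{\sf \Pi}_{e}(\Lambda)$. Composition of tensor products of endomorphisms is performed factorwise, which gives
\[{\sf \Pi}(\Lambda)\circ\bigotimes_{e}\int {\sf I}_{e}=\bigotimes_{e}\Big({\sf \Pi}_{e}(\Lambda)\circ\int{\sf I}_{e}(x)\d x\Big),\]
and by \eqref{eq:defPie} this is exactly the bracket in \eqref{eq:I3}. Substituting into \eqref{eq:I2}, together with the unchanged prefactor $\prod_{F}(d_{\lambda_{F}})^{\e_{F}}/(d^{\lambda_{F}})^{\x_{F}}$, yields the claimed formula.

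There is no real obstacle here. The only point needing care is bookkeeping: one must check that the identification of $\bigotimes_{e}\T_{e}$ used for ${\sf I}$ and ${\sf \Pi}$ is the same as the one through which ${\sf X}(\Lambda)$ is viewed as an endomorphism, that is, the reorganisation of factors $\bigotimes_{e}(\T_{e}^{*}\otimes\T_{e})\simeq\End(\bigotimes_{e}\T_{e})$. Since both the integration and the composition act edge by edge, they are compatible with this reordering. Circular edges need no separate treatment, because they simply contribute one more tensor factor.
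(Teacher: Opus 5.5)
Your proposal is correct and follows essentially the same route as the paper, which derives \eqref{eq:I3} from \eqref{eq:I2} by taking the $g$-independent operators ${\sf X}(\Lambda)$ and ${\sf \Pi}(\Lambda)$ out of the integral and integrating the product Haar measure edge by edge. You spell out each step (linearity of the trace, Fubini for the product measure, factorwise composition with ${\sf \Pi}_{e}(\Lambda)$), which the paper leaves as ``three tiny steps.''
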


\begin{proof} This expression follows from \eqref{eq:I2} by taking three tiny steps: replacing ${\sf \Pi}(\Lambda)$ by its definition, taking out of the integral the quantities that do not depend on the gauge configuration, and renaming the integration variable attached to each edge.
%
%
\end{proof}


\section{Step 4 --- Integration over unitary variables} \label{sec:integrationoverU}
We will now perform the unitary integration in  \eqref{eq:I3}, using the Collins--\'Sniady formula, see \cite{Collins,CollinsSniady}.

\subsection{The Collins--\'Sniady formula}\label{sec:CSF} According to \eqref{eq:defIe} and \eqref{eq:I3}, we need to know the value of the integral
\begin{equation}\label{eq:tocomputeCS}
\int_{\U(N)} x^{\otimes n}\otimes (x^{\vee})^{\otimes m}\d x \in \End(T^{n}_{m})
\end{equation}
for all non-negative integers $n$ and $m$. This value is given by the Collins--\'Sniady formula, which we explain in the form under which we will use it.

The first observation is that the invariance of the Haar measure under translations by scalar matrices implies that the integral vanishes if $n$ and $m$ are distinct. The Collins--\'Sniady formula gives the value of the integral in the case where $n=m$. 

Let us write this formula by showing the effect of the integral, seen as an endomorphism of~$T^{n}_{n}$, on an element of $T^{n}_{n}$ of the form $w\otimes \psi$, with $w\in T_{0}^{n}$ and $\psi \in T^{0}_{n}$. The result is an element of $T^{n}_{n}\simeq \End\big((\C^{N})^{\otimes n}\big)$, and this endomorphism will be written as a linear combination of elements of $\S_{n}$, which act on $(\C^{N})^{\otimes n}$ according \eqref{eq:action}. From this perspective, and with the definition \eqref{eq:defpi} of the projectors $\pi^{\lambda}$, the formula reads: 
\[\bigg[\int_{\U(N)} x^{\otimes n}\otimes (x^{\vee})^{\otimes n}\d x\bigg] (w\otimes \psi)=\sum_{\lambda\in \Part_{n,N}}\frac{d^{\lambda}}{d_{\lambda}}  \frac{1}{n!}\sum_{\sigma \in \S_{n}} \big( \psi, \sigma^{-1}(w)\big) \pi^{\lambda}\sigma.
\]
where $\Part_{n,N}$ is the set of partitions of the integer $n$ with at most $N$ parts, which already appeared in \eqref{eq:SW}. A graphical representation of this formula is given in Figure \ref{fig:CS} below. 

\begin{figure}[h!]
\begin{center}
\includegraphics{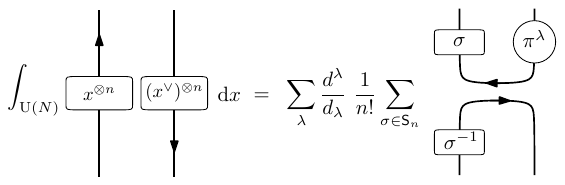}
\caption{\label{fig:CS} \small A graphical representation of the Collins--\'Sniady integration formula. The first sum runs over the set $\Part_{n,N}$ of partitions of $n$ with at most $N$ parts.}
\end{center}
\end{figure}

\subsection{Performing the integration}

Let us perform the integration in \eqref{eq:I3} using the Collins--\'Sniady formula as described in the previous paragraph. 

In \eqref{eq:I3}, we have one integration variable for each edge. Before actually applying the integration formula, the fact that the integral \eqref{eq:tocomputeCS} vanishes if $n\neq m$ gives us a weak form of the constraint of balance that a configuration of highest weights must satisfy in order to have a non-zero flat contribution. Indeed, for each edge $e$, the integers $l_{e}$ and $r_{e}$, which are respectively the sums of the components of the highest weights $\lambda_{\lf(e)}$ and $\lambda_{\rf(e)}$, must satisfy the relation
\begin{equation}\label{eq:weakbalance}
r_{e}=l_{e}+1.
\end{equation}
From now on, we will assume that this condition is satisfied for each edge $e$.

This condition being enforced, for the edge $e$, the integration formula, which applies with $n=r_{e}$, replaces the integral by a sum over a partition $\lambda\in \Part_{r_{e},N}$ and an average over a permutation $\sigma \in \S_{r_{e}}$. 

In view of \eqref{eq:projorth}, the presence of the factor $(\pi^{\lambda_{\lf_{e}}}\otimes {\rm id}_{T^{1}_{0}}) \otimes \pi^{\lambda_{\rf_{e}}}$ in front of the integral makes all terms of the first sum vanish, except for the one term corresponding to $\lambda=\lambda_{\rf_{e}}$. There remains a coefficient that is a quotient dimensions of irreps, respectively of $\U(N)$ and of $\S_{n}$, and the average over a permutation.

Carrying out the integration for all edges results, graphically, in the splitting of the diagram in the middle of each edge, that is, in the factorisation of the trace of \eqref{eq:I3} as a product of traces, one for each vertex, and one for each circular edge, which here again behave in a slightly exceptional, but simple way. 

Up to the quotients of dimensions and the averaging over permutations, the contribution of a vertex is represented graphically in Figure \ref{fig:around}.

\begin{figure}[h!]
\begin{center}
\includegraphics{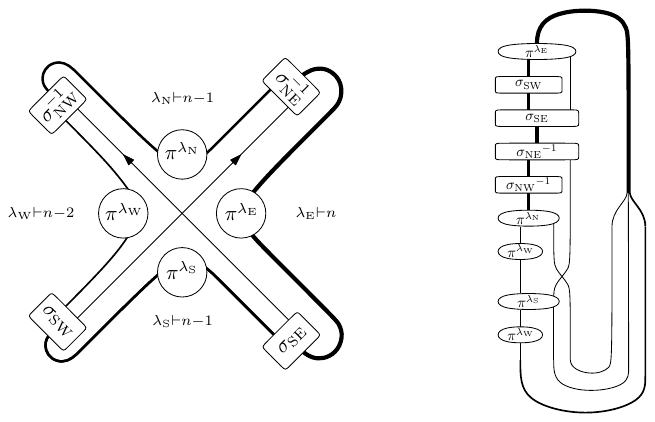}
\caption{\label{fig:around} \small Two representations of the contribution of a vertex after the Collins--\'Sniady formula has been applied to each integral in \eqref{eq:I3}. The first representation, on the left, in which we have set $n=n_{\ce}$, results from the application of the integration formula. The second representation, on the right, results directly  from the first by duplicating~$\pi^{\lambda_{\cw}}$, that is, replacing it by 
$(\pi^{\lambda_{\cw}})^{2}$, using the commutation properties of the various projectors $\pi^{\mu}$, and continuously deforming the diagram. There are four kinds of strands corresponding, from the thinnest to the thickest, to vectors of $T_{0}^{1}$, $T_{0}^{n-2}$, $T_{0}^{n-1}$, and $T_{0}^{n}$.}
\end{center}
\end{figure}

Let us focus on a vertex $v$ and set $n=n_{\ce_{v}}$. The number that is represented on the right of Figure \ref{fig:around} is the trace, in the space $T_{0}^{n}$, of a product of endomorphisms given by the actions of several elements of the group algebras of the groups $\S_{n-2}$, $\S_{n-1}$ and $\S_{n}$. It is understood that each of these groups is included in the next in the most natural way as the subgroup that leaves the greatest integer fixed.

All the elements of the group algebras of the symmetric groups are indicated with their names on the picture, except for one, that is the permutation, or indeed the transposition, stemming from the crossing of the two strands of the loop configuration at the vertex that we are focusing on. It is the transposition $(n-1 \, n)$. This being said, the number represented on the right of Figure \ref{fig:around} is
\begin{equation}\label{eq:contribvertex}
\Tr_{T^{n}_{0}}\big(\pi^{\lambda_{\ce}}\sigma_{\csw}\sigma_{\cse}\sigma_{\cne}^{-1}\sigma_{\cnw}^{-1}\ \   \pi^{\lambda_{\cn}}\pi^{\lambda_{\cw}}(n-1\, n) \pi^{\lambda_{\cs}}\pi^{\lambda_{\cw}}\big).
\end{equation}

We said that circular edges contribute in a special way: up to the quotient of dimensions and the averaging over the symmetric group $\S_{r_{e}}$, the contribution of a circular edge $e$ is  
\begin{equation}\label{eq:contribcircedge}
\sum_{\sigma\in \S_{r_{e}}} \Tr_{T^{r_{e}}_{0}}\big(\pi^{\lambda_{\rf_{e}}} \sigma \pi^{\lambda_{\lf_{e}}} \sigma^{-1}\big) = n_{r_{e}} ! \, \Tr_{T^{r_{e}}_{0}}\big(\pi^{\lambda_{\rf_{e}}}\pi^{\lambda_{\lf_{e}}}\big).
\end{equation}

Recall that we denote the set of circular edges by $\sCE$. Collecting the contribution of all vertices and all circular edges, including the coefficients that we left aside, we find the following expression.

\begin{proposition} \label{prop:I3.5}
The flat contribution of a non-negative configuration of highest weights $\Lambda$ satisfying the weak balance condition \eqref{eq:weakbalance} is given by
\begin{align}\label{eq:I3.5}
&\mathscr F(\Lambda)=\prod_{F\in \sF} \frac{(d_{\lambda_{F}})^{\e_{F}}}{(d^{\lambda_{F}})^{\x_{F}}} 
 \prod_{e\in \sE}\frac{d^{\lambda_{\rf_{e}}}}{d_{\lambda_{\rf_{e}}}} \frac{1}{n_{\rf_{e}}!}
 \prod_{e\in \sCE} n_{r_{e}}! \, \Tr_{T^{r_{e}}_{0}}\big(\pi^{\lambda_{\rf_{e}}}\pi^{\lambda_{\lf_{e}}}\big)
 \\
 &\hspace{4.5cm}\sum_{\ul\sigma}\prod_{v\in \sV}
 \Tr_{T^{n}_{0}}\big(\pi^{\lambda_{\ce}}\sigma_{\csw}\sigma_{\cse}\sigma_{\cne}^{-1}\sigma_{\cnw}^{-1}\ \   \pi^{\lambda_{\cn}}\pi^{\lambda_{\cw}}(n-1\, n) \pi^{\lambda_{\cs}}\pi^{\lambda_{\cw}}\big),
 \nonumber
\end{align}
where $\ul\sigma$ runs over $\prod_{e\in \sE} \S_{r_{e}}$ and it is understood, in the last product, that $n=n_{\ce_{v}}$ and the faces and edges are labelled relatively to the vertex $v$ under consideration. 
\end{proposition}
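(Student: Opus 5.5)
The plan is to start from Proposition \ref{prop:I3} and apply the Collins--\'Sniady formula of Section \ref{sec:CSF} to each factor $\int_{\U(N)}{\sf I}_{e}(x)\d x$, one edge at a time. Since the integral \eqref{eq:tocomputeCS} vanishes unless the numbers of $\C^{N}$ and $(\C^{N})^{*}$ factors agree, the weak balance condition \eqref{eq:weakbalance} is forced. Assuming it, for each edge we take $n=r_{e}=l_{e}+1$. The Collins--\'Sniady formula then replaces the integral by
\[\sum_{\lambda\in\Part_{r_{e},N}}\frac{d^{\lambda}}{d_{\lambda}}\frac{1}{r_{e}!}\sum_{\sigma\in\S_{r_{e}}}(\,\cdot\,,\sigma^{-1}\,\cdot\,)\,\pi^{\lambda}\sigma .\]
Composing with $\pi^{\lambda_{\lf_{e}}}\otimes{\rm id}\otimes\pi^{\lambda_{\rf_{e}}}$ and using \eqref{eq:projorth} on the $r_{e}$ outgoing strands kills every term except $\lambda=\lambda_{\rf_{e}}$. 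This produces the factor $\frac{d^{\lambda_{\rf_{e}}}}{d_{\lambda_{\rf_{e}}}}\frac{1}{n_{\rf_{e}}!}$ for each edge. The projector $\pi^{\lambda_{\rf_e}}$ is central and therefore slides along the strands. What remains of the edge is two tensors, one attached at each end of the edge, joined only through the permutation $\sigma_{e}$.

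Next I will show that the global trace in \eqref{eq:I3} factorises. After the substitution, the strands of the diagram no longer pass through the middle of any edge, so the big trace splits into a product of traces, one per connected component of the resulting diagram. I will check in the Penrose-diagram language that these components are exactly:
\begin{itemize}
\item one per vertex, built from ${\sf X}_{v}(\Lambda)$ and the half-edge pieces of its four incident edges;
\item one per circular edge, whose six strands close on themselves.
\end{itemize}
For a circular edge, the component is $\sum_{\sigma}\Tr(\pi^{\lambda_{\rf_{e}}}\sigma\pi^{\lambda_{\lf_{e}}}\sigma^{-1})$. Here $\pi^{\lambda_{\lf_e}}$ is viewed in $\C[\S_{r_e}]$ via $\S_{r_e-1}\subset\S_{r_e}$. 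Conjugation invariance of the trace gives \eqref{eq:contribcircedge}.

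The main work, and the step I expect to be most delicate, is identifying the vertex contribution precisely as \eqref{eq:contribvertex}. At a vertex $v$, set $n=n_{\ce}$. By weak balance along the four edges, $n_{\cn}=n_{\cs}=n-1$ and $n_{\cw}=n-2$. I would follow the strands of the diagram. The $n$ strands carrying the face $\ce$ run through $\cne$ and $\cse$, and the $\cw$ strands run through $\cnw$ and $\csw$. The edge strands are swapped by ${\sf X}_{v}$, which yields the transposition $(n-1\ n)$ once all spaces are embedded in $T^{n}_{0}$ via $\S_{n-2}\subset\S_{n-1}\subset\S_{n}$. The $\cn$ and $\cs$ projectors sit where ${\sf X}_{v}$ pairs or inserts $\pi^{\lambda_{\cn}}$ and $\pi^{\lambda_{\cs}}$.

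The four permutations $\sigma_{\cne},\sigma_{\cnw},\sigma_{\csw},\sigma_{\cse}$ come from the Collins--\'Sniady pairings of the incident edges. Outgoing edges contribute inverses because of the $\sigma^{-1}$ in the pairing. Care is needed with orientation and with which end of each edge receives $\sigma$ versus $\sigma^{-1}$. I would fix the convention that $\sigma_{e}$ acts at the terminal vertex, so that each $\sigma_{e}$ occurs once as itself and once as its inverse. This makes the sum over $\ul\sigma\in\prod_{e}\S_{r_{e}}$ consistent.

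Finally, I would insert the idempotent $\pi^{\lambda_{\cw}}$ twice and commute the central projectors within the appropriate subalgebras to reach the displayed normal form. Collecting the per-edge coefficients, the circular-edge traces and the prefactor from Proposition \ref{prop:I3} then gives \eqref{eq:I3.5}.
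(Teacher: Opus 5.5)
Your proposal is correct and follows the same route as the paper. You apply Collins--\'Sniady edge by edge, use \eqref{eq:projorth} to keep only $\lambda=\lambda_{\rf_{e}}$, split the trace at the middle of each edge into vertex and circular-edge factors, and duplicate $\pi^{\lambda_{\cw}}$ to reach \eqref{eq:contribvertex}. One bookkeeping point: the permutation of a circular edge is already summed inside \eqref{eq:contribcircedge}, so the remaining sum over $\ul\sigma$ must effectively run over $\prod_{e\in \sLE}\S_{r_{e}}$ only (otherwise each circular edge picks up a spurious extra factor $r_{e}!$), and this is also how the paper uses that sum later in \eqref{eq:reorga}.
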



\section{Step 5 --- Introduction of modules over symmetric groups} \label{sec:symmetricmodules}

We started our computation with Schur functions, which are characters of irreducible modules over $\U(N)$. We used the Schur--Weyl duality to express these Schur functions using permutations. This led us to consider the $\U(N)\times \S_{n}$ module $(\C^{N})^{\otimes n}$ and more generally the spaces $T^{n}_{m}$. There, we could perform the integration over unitary variables, thereby getting rid of them. We are now going to switch completely to the symmetric side, and to continue, and eventually complete, our computation in modules over symmetric groups. 

This section is short and contains little computation. Its interest is to introduce the framework in which the end of the proof will take place. For a description of the representation theory of symmetric groups, the reader should consult \cite{FultonHarris, Macdonald,Sagan}.

\subsection{Symmetric modules and the branching rule}\label{sec:branching}
 In the course of the present computation, we consider the unitary group $\U(N)$ for a single, fixed, value of $N$. In contrast, we need to consider the symmetric groups $\S_{n}$ for many values of $n$. It is useful to consider these groups as the steps of an ascending chain of inclusions, as follows.

For each integer $n\geq 1$, the symmetric group $\S_{n}$ is the group of permutations of the set $\{1,\ldots,n\}$. For all integer $n\geq 2$, we identify $\S_{n-1}$ with the subgroup $\{\sigma \in \S_{n} :  \sigma(n)=n\}$ of~$\S_{n}$. We have therefore a chain of inclusions $\S_{1}\subset \ldots  \subset \S_{n}\subset \ldots$ Accordingly, every~$\S_{n}$-module, that is, every vector space on which $\S_{n}$ acts linearly, becomes an $\S_{k}$-module for every integer $k\leq n$.

We mentioned already the fact that the irreps of $\S_{n}$ are classically labelled by the integer partitions of $n$. Let us choose, once and for all, for each partition $\lambda$ of each integer $n\geq 1$, an actual instance $V^{\lambda}$ of the irrep of $\S_{n}$ labelled by the partition $\lambda$. 

Let us take $V^{\lambda}$ to be a real vector space, as is made possible by the fact that all the irreps of the symmetric groups are real. Moreover, let us endow $V^{\lambda}$ with an invariant Euclidean structure, that is, an inner product with respect to which $\S_{n}$ acts by isometries. This inner product is unique up to a multiplication by a positive scalar. 

Let $\mu$ be a partition of the integer $n$. The {\em branching rule}, stated below as Proposition \ref{prop:branching}, expresses how the irrep $V^{\mu}$ of $\S_{n}$ splits into irreps of $\S_{n-1}$. Recall the relation $\uparrow$ introduced in \eqref{eq:extends}. It was introduced for highest weights, but we use the same symbol for the same relation between partitions.

\begin{proposition} \label{prop:branching} Let $n\geq 2$ be an integer. Let $\mu$ be a partition of $n$. Then there is an isomorphism of $\S_{n-1}$-modules
\[V^{\mu}=\bigoplus_{\lambda\vdash n-1 , \,  \lambda\, \leads\, \mu} V^{\lambda}.\]
\end{proposition}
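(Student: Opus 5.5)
The branching rule is classical, and the most economical route in the context of this paper is through characters and Frobenius reciprocity. Restriction to $\S_{n-1}$ decomposes $V^{\mu}$ as $\bigoplus_{\lambda\vdash n-1} m_{\lambda} V^{\lambda}$, and by Frobenius reciprocity $m_{\lambda}=\langle \operatorname{Res}\chi^{\mu},\chi^{\lambda}\rangle_{\S_{n-1}}=\langle \chi^{\mu},\operatorname{Ind}\chi^{\lambda}\rangle_{\S_{n}}$. So it suffices to show that $\operatorname{Ind}_{\S_{n-1}}^{\S_{n}}V^{\lambda}\simeq\bigoplus_{\mu\vdash n,\ \lambda\leads\mu}V^{\mu}$, each summand appearing exactly once.

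To prove the induction statement, I would use the Frobenius characteristic map $\operatorname{ch}$, which sends $\chi^{\lambda}$ to the Schur symmetric function $s_{\lambda}$ in infinitely many variables. This is the same correspondence underlying the Schur--Weyl duality \eqref{eq:SW}, and the Schur functions \eqref{eq:defSchur} are its truncations to $N$ variables. Under $\operatorname{ch}$, induction from $\S_{n-1}\times\S_{1}$ to $\S_{n}$ becomes multiplication, and the trivial character of $\S_{1}$ maps to $s_{(1)}=p_{1}=h_{1}$. Hence $\operatorname{ch}(\operatorname{Ind}\chi^{\lambda})=s_{\lambda}h_{1}$.

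The main step is then Pieri's rule, $s_{\lambda}h_{1}=\sum_{\lambda\leads\mu}s_{\mu}$. I would prove it from the bialternant formula \eqref{eq:defSchur}, working with $N\geq n$ variables so that no partition involved is lost. Multiplying the numerator determinant $\det[\xi_{i}^{\lambda_{j}+N-j}]$ by $\xi_{1}+\ldots+\xi_{N}$ and expanding gives $\sum_{k}\det[\xi_{i}^{\lambda_{j}+\delta_{jk}+N-j}]$. This holds because the operator "multiply the $i$-th row entries by $\xi_{i}$ and sum over $i$" acts as a derivation on alternants, which amounts to raising one exponent at a time. A term in which raising $\lambda_{k}$ breaks monotonicity has two equal columns, so it vanishes. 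Every surviving term is $a_{\mu+\delta}$ for a unique $\mu$ with $\lambda\leads\mu$. Dividing by the Vandermonde determinant gives the Pieri rule.

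Linear independence of the $s_{\mu}$ then gives multiplicity one for each $\mu$ with $\lambda\leads\mu$ and zero otherwise, so $m_{\lambda}=1$ exactly when $\lambda\leads\mu$, which is the branching rule. The only delicate point is the determinant manipulation in the Pieri step, and in particular the bookkeeping of the cancelling terms.

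An alternative that avoids symmetric functions is to realise $V^{\mu}$ as a Specht module with its standard basis indexed by standard tableaux. One would then filter it by the position of the entry $n$, removing a corner box, to get the restriction directly. I would prefer the character route because the tools it needs are already in place.
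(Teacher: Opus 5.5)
The paper gives no proof of this proposition. It quotes the branching rule as a classical fact and refers to \cite{FultonHarris,Macdonald,Sagan}.

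Your argument is a correct proof. It rests on standard inputs: Frobenius reciprocity, the characteristic theorem $\mathrm{ch}(\chi^{\lambda})=s_{\lambda}$ with multiplicativity of $\mathrm{ch}$ under induction products, and linear independence of the $s_{\mu}$ with at most $N$ parts, where $N\geq n$. Since the statement does not involve $N$, you are free to choose $N$ this large.

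One presentational point: your justification of $p_{1}a_{\lambda+\delta}=\sum_{k}a_{\lambda+\delta+e_{k}}$, with $\delta=(N-1,\ldots,1,0)$, conflates scaling rows with raising exponents in columns. The clean reason is that $p_{1}=\xi_{1}+\ldots+\xi_{N}$ is symmetric, so it commutes with antisymmetrisation $A$. Hence $p_{1}A(\xi^{\lambda+\delta})=A(p_{1}\xi^{\lambda+\delta})=\sum_{k}A(\xi^{\lambda+\delta+e_{k}})$. Equivalently, by Jacobi's formula, summing over $k$ the determinant of $M$ with its $k$-th column multiplied by $D=\mathrm{diag}(\xi_{1},\ldots,\xi_{N})$ gives $\mathrm{tr}(D)\det M$. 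There is then no cancellation to track: each non-monotone term vanishes on its own because it has two equal columns.

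There is a shorter route that stays inside the paper's own toolkit and avoids both $\mathrm{ch}$ and reciprocity.
\begin{itemize}
\item As $\U(N)\times\S_{n-1}$-modules, \eqref{eq:SW} at rank $n-1$ gives $(\C^{N})^{\otimes n}=(\C^{N})^{\otimes(n-1)}\otimes\C^{N}\simeq\bigoplus_{\lambda\in\Part_{n-1,N}}(V_{\lambda}\otimes\C^{N})\otimes V^{\lambda}$.
\item Your Pieri computation, read as the character identity $s_{\lambda}(x)\Tr(x)=\sum_{\lambda\leads\mu}s_{\mu}(x)$ on $\U(N)$, gives $V_{\lambda}\otimes\C^{N}\simeq\bigoplus_{\lambda\leads\mu}V_{\mu}$.
\item Compare the multiplicity spaces $\mathrm{Hom}_{\U(N)}(V_{\mu},\cdot)$ with those of \eqref{eq:SW} at rank $n$, restricted to $\S_{n-1}$. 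This yields $\mathrm{Res}\,V^{\mu}\simeq\bigoplus_{\lambda\leads\mu}V^{\lambda}$ directly.
\end{itemize}
Your route is the same Pieri identity transported through $\mathrm{ch}$, which can itself be derived from \eqref{eq:SW} by taking traces of $x\sigma$. It costs a bit more machinery, but it also gives you the dual induction statement.

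The Specht-module alternative, filtering by the position of $n$, is the argument in \cite{Sagan}. It is more elementary and gives explicit bases, but it is longer.

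Do not try to extract the rule from the Okounkov--Vershik material of Section~\ref{sec:detscal}. As written there, the Gelfand--Zetlin lines are defined through the branching rule, so that would be circular.
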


A crucial feature of this result is that $V^{\mu}$ is an $\S_{n-1}$-module without multiplicity. Therefore, the vector space $V^{\mu}$ splits in a canonical way as a direct sum of irreducible $\S_{n-1}$-submodules. To be clear, there is, for all partitions $\lambda\vdash n-1$ and $\mu\vdash n$ such that $\lambda\leads \mu$, a unique linear subspace of $V^{\mu}$ that is an irreducible $\S_{n-1}$-module isomorphic to $V^{\lambda}$. The space of $\S_{n-1}$-equivariant maps $V^{\lambda}\to V^{\mu}$ is therefore a line. The subset of this line formed by isometric maps has exactly two elements, which differ by a sign. 

We will now make a choice of one of these two maps, for all pair of partitions $\lambda\leads \mu$. For the moment, this choice will be arbitrary, but it will become apparent at a later stage that we want to make it in a way that fits a certain purpose. Explaining which purpose now and how it can be achieved would unduly interrupt the flow of the proof, so that we postpone this discussion until Section \ref{sec:detscal}. 

For the time being, let us choose, for all $n\geq 2$ and all partitions $\lambda \vdash n-1$ and $\mu \vdash n$ such that $\lambda\leads \mu$, an isometric $\S_{n-1}$-equivariant map $i_{\mu\lambda}:V^{\lambda}\to V^{\mu}$. Let us introduce the notation $i_{\lambda\mu}=i_{\mu\lambda}^{*}:V^{\mu}\to V^{\lambda}$ for the Euclidean adjoint of $i_{\mu\lambda}$, which is, up to a multiplicative constant, the unique isometric $\S_{n-1}$-equivariant map from $V^{\mu}$ to $V^{\lambda}$. See Figure \ref{fig:young} for a representation of these maps for $n\leq 5$.

In what follows, we will collectively call {\em isometric intertwiners} the maps $i_{\mu\lambda}$ and $i_{\lambda\mu}$ which we just defined.

\index{iml@$i_{\mu\lambda}, i_{\lambda\mu}$, isometric intertwiners}

\begin{figure}[h!]
\begin{center}
\includegraphics{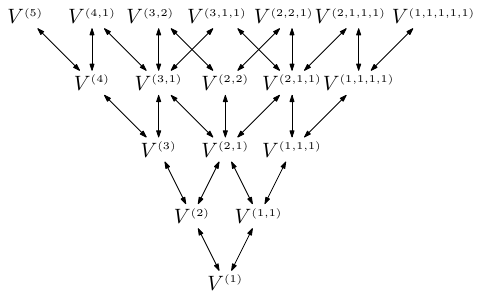}
\caption{\label{fig:young} \small This figure shows the first levels of the Young graph, with arrows corresponding to the isometric intertwiners defined above.}
\end{center}
\end{figure}

As an exception to our decision of not giving names to the representations of the various groups that we consider, let us denote, for all $n\geq 1$ and all $\mu\vdash n$, by
\[\rho_{\mu}:\S_{n}\to \GL(V^{\mu})\]
the action of $\S_{n}$ on $V^{\mu}$.

\index{rho@$\rho_{\mu}$, action of the symmetric group on $V^{\mu}$}

We will need the following pair of simple properties of the isometric intertwiners. 

\begin{lemma}
Consider two partitions $\lambda\leads \mu$, of $n-1$ and $n$ respectively. Then, for all $s\in \C[\S_{n-1}]$, 
\begin{equation}\label{eq:ident}
\rho_{\lambda}(s)=i_{\lambda\mu}\, \rho_{\mu}(s)\, i_{\mu\lambda} \ \ \text{ and } \ \ i_{\mu\lambda}\, \rho_{\lambda}(s)\, i_{\lambda\mu}=\rho_{\mu}(\pi^{\lambda}s)=\rho_{\mu}(s\pi^{\lambda}).
\end{equation}
\end{lemma}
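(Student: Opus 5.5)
The plan is to reduce both identities to two basic facts about $i_{\mu\lambda}$. First, it is $\S_{n-1}$-equivariant, so $\rho_{\mu}(s)\, i_{\mu\lambda}=i_{\mu\lambda}\,\rho_{\lambda}(s)$ for every $s\in\C[\S_{n-1}]$, by linearity from group elements. Second, it is isometric, so $i_{\lambda\mu}i_{\mu\lambda}={\rm id}_{V^{\lambda}}$. Since $i_{\lambda\mu}=i_{\mu\lambda}^{*}$, the composition $P=i_{\mu\lambda}i_{\lambda\mu}$ is the orthogonal projection of $V^{\mu}$ onto the image of $i_{\mu\lambda}$. By Proposition \ref{prop:branching}, this image is the unique $\S_{n-1}$-submodule of $V^{\mu}$ isomorphic to $V^{\lambda}$.

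For the first identity, write $i_{\lambda\mu}\rho_{\mu}(s)i_{\mu\lambda}=i_{\lambda\mu}i_{\mu\lambda}\rho_{\lambda}(s)$ using equivariance, and conclude with the isometry property.

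For the second identity:
\begin{enumerate}[\indent\sbullet]
\item Using equivariance, write $i_{\mu\lambda}\rho_{\lambda}(s)i_{\lambda\mu}=\rho_{\mu}(s)\,i_{\mu\lambda}i_{\lambda\mu}=\rho_{\mu}(s)P$.
\item Identify $P$ with $\rho_{\mu}(\pi^{\lambda})$, where $\pi^{\lambda}\in\C[\S_{n-1}]$ is the isotypical projector \eqref{eq:defpi} for $\S_{n-1}$. Viewing $V^{\mu}$ as an $\S_{n-1}$-module, $\rho_{\mu}(\pi^{\lambda})$ is the projector onto the $\lambda$-isotypical component along the other isotypical components. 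By the multiplicity-freeness in Proposition \ref{prop:branching}, this component is exactly the image of $i_{\mu\lambda}$.
\item Check that this projector is orthogonal. The coefficients of $\pi^{\lambda}$ are real and invariant under $\sigma\mapsto\sigma^{-1}$, and $\rho_{\mu}(\sigma)^{*}=\rho_{\mu}(\sigma^{-1})$ because $\S_{n}$ acts by isometries. Hence $\rho_{\mu}(\pi^{\lambda})$ is self-adjoint, and so equals $P$.
\item Conclude that $i_{\mu\lambda}\rho_{\lambda}(s)i_{\lambda\mu}=\rho_{\mu}(s)\rho_{\mu}(\pi^{\lambda})=\rho_{\mu}(s\pi^{\lambda})$.
\item Obtain the remaining equality $\rho_{\mu}(s\pi^{\lambda})=\rho_{\mu}(\pi^{\lambda}s)$ from the fact that $\pi^{\lambda}$ is central in $\C[\S_{n-1}]$ and $s\in\C[\S_{n-1}]$.
\end{enumerate}

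The only point that needs some care is the identification $P=\rho_{\mu}(\pi^{\lambda})$. It uses both the multiplicity-freeness of the branching rule, which makes the isotypical component equal to the image of $i_{\mu\lambda}$, and the self-adjointness of $\rho_{\mu}(\pi^{\lambda})$, which makes the projection orthogonal. Everything else is formal.
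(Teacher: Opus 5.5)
Your proof is correct and follows essentially the same route as the paper: isometry gives $i_{\lambda\mu}i_{\mu\lambda}={\rm id}_{V^{\lambda}}$, equivariance moves $\rho(s)$ across the intertwiner, multiplicity-freeness of the branching rule identifies $i_{\mu\lambda}i_{\lambda\mu}$ with $\rho_{\mu}(\pi^{\lambda})$, and centrality of $\pi^{\lambda}$ gives the last equality. Your explicit check that $\rho_{\mu}(\pi^{\lambda})$ is self-adjoint, so that it is the orthogonal projection, fills in a detail the paper only asserts.
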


\begin{proof} By definition of $i_{\lambda\mu}$ as the adjoint of $i_{\mu\lambda}$ and the fact that $i_{\mu\lambda}$ is injective, we have the equality $i_{\lambda\mu}i_{\mu\lambda}={\rm id}_{V^{\lambda}}$, which is the special of the first identity when $s={\rm id}$. For an arbitrary $s$, the $\S_{n-1}$-equivariance of the map $i_{\lambda\mu}$ yields
\[i_{\lambda\mu}\, \rho_{\mu}(s)\, i_{\mu\lambda}=\rho_{\lambda}(s)i_{\lambda\mu}i_{\mu\lambda}=\rho_{\lambda}(s).\]
Similarly, $i_{\mu\lambda}i_{\lambda\mu}$ is the orthogonal projection of $V^{\mu}$ onto its sub-$\S_{n-1}$-module isomorphic to $V^{\lambda}$. Thus, $i_{\mu\lambda}i_{\lambda\mu}=\rho_{\mu}(\pi^{\lambda})$. Then, the $\S_{n-1}$-equivariance of $i_{\mu\lambda}$ entails
\[i_{\mu\lambda}\, \rho_{\lambda}(s)\, i_{\lambda\mu}=\rho_{\mu}(s)i_{\mu\lambda}i_{\lambda\mu}=\rho_{\mu}(s)\rho_{\mu}(\pi^{\lambda})=\rho_{\mu}(s\pi^{\lambda}).\]
The fact that $\pi^{\lambda}$ is commutes to every element of $\S_{n-1}$ completes the proof.
\end{proof}

\subsection{The condition of balance} Recall that for all partition $\lambda$ of the integer $n$, we denote by~$\chi^{\lambda}$ the character of the $\lambda$-irrep of $\S_{n}$. Just as we deduced \eqref{eq:schurastrace} from \eqref{eq:SW}, we can deduce, from the same equation \eqref{eq:SW}, the fact that for all integer $n\geq 1$, all partition $\lambda$ of $n$, and all $\xi\in \C[\S_{n}]$, 
\begin{equation}\label{eq:charastrace}
\chi^{\lambda}(\xi )=\frac{1}{d_{\lambda}}\Tr_{(\C^{N})^{\otimes n}}\big( \pi^{\lambda} \xi\big).
\end{equation}
Therefore, the traces that appears in \eqref{eq:I3.5} are respectively equal to 
\[d_{\lambda_{\rf_{e}}}\chi^{\lambda_{\rf_{e}}}\big(\pi^{\lambda_{\lf_{e}}}\big)
\ \text{ and } \ 
d_{\lambda_{\ce}}\chi^{\lambda_{\ce}}\big(\sigma_{\csw}\sigma_{\cse}\sigma_{\cne}^{-1}\sigma_{\cnw}^{-1}\ \   \pi^{\lambda_{\cn}}\pi^{\lambda_{\cw}}(n-1\, n) \pi^{\lambda_{\cs}}\pi^{\lambda_{\cw}}\big).
\]

Since the moment where we performed the integration with respect to the unitary variables, and more specifically since the paragraph containing \eqref{eq:weakbalance}, we are assuming that, because $\lambda_{\ce}$ is a partition of $n$, $\lambda_{\cn}$, $\lambda_{\cs}$ and $\lambda_{\cw}$ must  respectively be partitions of $n-1$, $n-1$ and $n-2$. Otherwise, we know that the flat contribution of the configuration of highest weights that we are studying vanishes.

A further necessary condition of non-vanishing arises from the fact that the projector $\pi^{\lambda_{\lf_{e}}}$ must not vanish on the space of the representation $\lambda_{\rf_{e}}$ of $\S_{r_{e}}$, nor the products  $\pi^{\lambda_{\cn}}\pi^{\lambda_{\cw}}$ and $\pi^{\lambda_{\cs}}\pi^{\lambda_{\cw}}$ on the space of the representation $\lambda_{\ce}$ of $\S_{n}$. According to the branching rule for the irreps of the symmetric groups, stated above as Proposition \ref{prop:branching}, this is the case only if for each circular edge $e$, we have $\lambda_{\lf_{e}}\leads \lambda_{\rf_{e}}$, and at each vertex the following inclusions of Young diagrams hold:
\begin{equation}\label{eq:admissibility}
\raisebox{-1cm}{\includegraphics{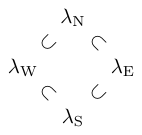}}
\end{equation}
or, with our notation, if and only if $\lambda_{\cw}\leads \lambda_{\cn}\leads \lambda_{\ce}$ and $\lambda_{\cw}\leads \lambda_{\cs}\leads \lambda_{\ce}$. These relations hold if and only if the configuration of highest weights satisfies the condition of balance that we introduced in Definition \ref{def:wellbalanced}. 

\begin{proposition} \label{prop:balance} If the configuration of highest weights $\Lambda$ is not balanced, then $\mathscr F(\Lambda)=0$.
\end{proposition}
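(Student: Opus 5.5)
By Lemma \ref{lem:translate}, $\mathscr F(\Lambda+(q,\ldots,q))=\mathscr F(\Lambda)$ for every $q$, and adding $(q,\ldots,q)$ preserves the relation $\leads$ and its failure. So balance of $\Lambda$ is equivalent to balance of its shift. It therefore suffices to treat non-negative configurations, where Proposition \ref{prop:I3.5} and the discussion above apply.

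\textbf{Weak balance.} Let $\Lambda$ be non-negative and not balanced. If the weak balance condition \eqref{eq:weakbalance} fails at some edge $e$, then $l_e+1\neq r_e$. The Haar integral \eqref{eq:tocomputeCS} attached to that edge in \eqref{eq:I3} then vanishes, by invariance under scalar matrices $e^{i\theta}I_N$, which multiply the integrand by $e^{i\theta(l_e+1-r_e)}$. Hence $\mathscr F(\Lambda)=0$.

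\textbf{Full balance.} Assume now that weak balance holds everywhere. Then \eqref{eq:I3.5} applies. Rewrite its traces with \eqref{eq:charastrace} as $d_{\lambda_{\rf_e}}\chi^{\lambda_{\rf_e}}(\pi^{\lambda_{\lf_e}})$ for circular edges, and as $d_{\lambda_{\ce}}\chi^{\lambda_{\ce}}(\cdots)$ for vertices. Each such trace is a trace of an operator acting on the irrep $V^{\lambda}$ with $\lambda=\lambda_{\rf_e}$ or $\lambda_{\ce}$.

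Let $\mu\vdash k-1$ and $\lambda\vdash k$. By the branching rule (Proposition \ref{prop:branching}), $\rho_\lambda(\pi^\mu)$ is the projection onto the $\mu$-isotypic $\S_{k-1}$-component of $V^\lambda$. This component is zero unless $\mu\leads\lambda$.

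Similarly, for $\nu\vdash k-2$, the product $\rho_\lambda(\pi^{\mu}\pi^{\nu})$ vanishes unless $\nu\leads\mu\leads\lambda$. To see this, restrict first to $\S_{k-1}$, which keeps only the $V^\mu$ summand, and then to $\S_{k-2}$ inside $V^\mu$. Here I use that $\pi^\nu$ commutes with $\S_{k-1}$-isotypic projectors, since $\pi^\nu$ lies in $\C[\S_{k-2}]\subset\C[\S_{k-1}]$, and $\pi^\mu$ is central in $\C[\S_{k-1}]$.

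\textbf{Conclusion.} Suppose balance fails at a linear edge $e$. Then at an endpoint vertex $v$ of $e$, one of the relations $\lambda_{\cw}\leads\lambda_{\cn}\leads\lambda_{\ce}$ or $\lambda_{\cw}\leads\lambda_{\cs}\leads\lambda_{\ce}$ fails. Indeed, each of the four edges at $v$ has its left and right faces among consecutive pairs of $\cw,\cn,\ce$ or $\cw,\cs,\ce$.

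The vertex trace contains the factors $\pi^{\lambda_{\cn}}\pi^{\lambda_{\cw}}$ and $\pi^{\lambda_{\cs}}\pi^{\lambda_{\cw}}$. Since $\chi^{\lambda_{\ce}}$ is a trace on $V^{\lambda_{\ce}}$, one of these factors acts as zero there. So every summand over $\ul\sigma$ in \eqref{eq:I3.5} vanishes.

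If instead balance fails at a circular edge $e$, the factor $\chi^{\lambda_{\rf_e}}(\pi^{\lambda_{\lf_e}})$ is zero.

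\textbf{Main obstacle.} The step needing care is the identification of which adjacent-face pairs appear at a vertex, that is, checking against Figure \ref{fig:cardinaledges} that the left/right faces of $\cnw,\cne,\csw,\cse$ are exactly $(\cw,\cn),(\cn,\ce),(\cw,\cs),(\cs,\ce)$. This ensures that every edge's balance condition is tested at each of its endpoints.
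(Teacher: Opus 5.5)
Your proof is correct and is essentially the paper's own argument. You reduce to non-negative configurations via Lemma~\ref{lem:translate}, and obtain weak balance from the vanishing of the Haar integral when $l_e+1\neq r_e$. You then use the branching rule to show that $\pi^{\lambda_{\lf_e}}$ on $V^{\lambda_{\rf_e}}$ (circular edges), and $\pi^{\lambda_{\cn}}\pi^{\lambda_{\cw}}$, $\pi^{\lambda_{\cs}}\pi^{\lambda_{\cw}}$ on $V^{\lambda_{\ce}}$ (vertices), vanish unless the balance relations hold. Your check that $\cnw,\cne,\csw,\cse$ have (left, right) faces $(\cw,\cn),(\cn,\ce),(\cw,\cs),(\cs,\ce)$ is correct, and it is exactly the equivalence between vertex admissibility and balance that the paper asserts without spelling out.
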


\begin{proof} If $\Lambda$ is non-negative, this follows from the discussion preceding the statement of the proposition. Otherwise, choose $q\in \Z$ such that $\Lambda+(q,\ldots,q)$ is non-negative. Adding $(q,\ldots,q)$ does not alter the fact that the balance condition is not satisfied, so that, by Lemma \ref{lem:translate}, $\mathscr F(\Lambda)=\mathscr F(\Lambda+(q,\ldots,q))=0$.
\end{proof}

We arrive at the following mild reformulation of \eqref{eq:I3.5}.

\begin{proposition} \label{prop:I4} The flat contribution of a balanced non-negative configuration of highest weights $\Lambda$ is given by
\begin{align}\label{eq:I4}
&\mathscr F(\Lambda)=\prod_{F\in \sF} \frac{(d_{\lambda_{F}})^{\e_{F}}}{(d^{\lambda_{F}})^{\x_{F}}} 
\prod_{v\in \sV} d_{\lambda_{\ce_{v}}}
 \prod_{e\in \sE}\frac{d^{\lambda_{\rf_{e}}}}{d_{\lambda_{\rf_{e}}}} \frac{1}{n_{\rf_{e}}!}
\prod_{e\in \sCE} n_{r_{e}}! \, d_{\lambda_{\rf_{e}}} \chi^{\lambda_{\rf_{e}}}\big(\pi^{\lambda_{\lf_{e}}}\big) 
 \\
 &\hspace{5.3cm} \sum_{\ul\sigma}\prod_{v\in \sV}\chi^{\lambda_{\ce}}\big(\sigma_{\csw}\sigma_{\cse}\sigma_{\cne}^{-1}\sigma_{\cnw}^{-1}\ \   \pi^{\lambda_{\cn}}\pi^{\lambda_{\cw}}(n-1\, n) \pi^{\lambda_{\cs}}\pi^{\lambda_{\cw}}\big),
 \nonumber
\end{align}
where $\ul\sigma$ runs over $\prod_{e\in \sE} \S_{r_{e}}$ and it is understood, in the last product, that $n=n_{\ce_{v}}$ and the faces and edges are labelled relatively to the vertex $v$ under consideration. 
\end{proposition}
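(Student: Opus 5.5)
The formula \eqref{eq:I4} follows directly from Proposition \ref{prop:I3.5}. The plan is to rewrite the two kinds of traces on $T^{n}_{0}=(\C^{N})^{\otimes n}$ appearing in \eqref{eq:I3.5} as symmetric characters via \eqref{eq:charastrace}, and to drop the terms that Proposition \ref{prop:balance} shows to vanish.

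\emph{Reduction to the balanced case.} A balanced non-negative configuration satisfies the weak balance condition \eqref{eq:weakbalance}. Indeed, $\lambda_{\lf_{e}}\leads\lambda_{\rf_{e}}$ forces $n_{\rf_{e}}=n_{\lf_{e}}+1$, that is, $r_{e}=l_{e}+1$. Hence Proposition \ref{prop:I3.5} applies verbatim.

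\emph{Conversion of the traces.} For each circular edge $e$, apply \eqref{eq:charastrace} with $n=r_{e}$, $\lambda=\lambda_{\rf_{e}}$ and $\xi=\pi^{\lambda_{\lf_{e}}}$, where $\pi^{\lambda_{\lf_{e}}}\in\C[\S_{r_{e}-1}]\subset\C[\S_{r_{e}}]$. One must check that the trace in \eqref{eq:I3.5} carries exactly one factor $\pi^{\lambda_{\rf_{e}}}$ on the left. It does, so
\[
\Tr_{T^{r_{e}}_{0}}\big(\pi^{\lambda_{\rf_{e}}}\pi^{\lambda_{\lf_{e}}}\big)=d_{\lambda_{\rf_{e}}}\chi^{\lambda_{\rf_{e}}}\big(\pi^{\lambda_{\lf_{e}}}\big).
\]
For each vertex $v$, with $n=n_{\ce_{v}}$, the trace in \eqref{eq:I3.5} has the form $\Tr(\pi^{\lambda_{\ce}}\xi_{v})$. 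Here $\xi_{v}$ is the element of $\C[\S_{n}]$ obtained by multiplying $\sigma_{\csw}\sigma_{\cse}\sigma_{\cne}^{-1}\sigma_{\cnw}^{-1}$ with the indicated projectors and the transposition $(n-1\,n)$. This requires each of the permutations $\sigma_{\cdot}$ to live in a symmetric group of size at most $n$. That holds: the incident edges have right faces $\ce$, $\cn$, $\cs$, whose sizes are $n$, $n-1$, $n-1$, and these groups are embedded in $\S_{n}$ as in Section \ref{sec:branching}. Then \eqref{eq:charastrace} gives $d_{\lambda_{\ce_{v}}}\chi^{\lambda_{\ce}}(\xi_{v})$.

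\emph{Collecting factors.} The factor $d_{\lambda_{\ce_{v}}}$ does not depend on $\ul\sigma$, so it can be pulled out of the sum as $\prod_{v}d_{\lambda_{\ce_{v}}}$. The circular-edge factors become $n_{r_{e}}!\,d_{\lambda_{\rf_{e}}}\chi^{\lambda_{\rf_{e}}}(\pi^{\lambda_{\lf_{e}}})$. Substituting these into \eqref{eq:I3.5} yields \eqref{eq:I4}.

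The only delicate point is the bookkeeping of the embeddings $\S_{n-2}\subset\S_{n-1}\subset\S_{n}$ at each vertex, which the paper fixed in its discussion of Figure \ref{fig:around}. There is no genuine obstacle beyond that.
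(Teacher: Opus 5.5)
Your proposal is correct and follows the paper's route: the paper obtains \eqref{eq:I4} as a mild reformulation of \eqref{eq:I3.5}, rewriting both the circular-edge traces and the vertex traces on $(\C^{N})^{\otimes n}$ as symmetric characters via \eqref{eq:charastrace}, with balance guaranteeing the weak balance condition \eqref{eq:weakbalance} required by Proposition \ref{prop:I3.5}. Your mention of dropping vanishing terms via Proposition \ref{prop:balance} is not needed here, since $\Lambda$ is already assumed balanced.
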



\section{Step 6 --- Summation over permutations}\label{sec:summingpermutations}
 We will now perform the summation with respect to the permutations attached to the edges, that is, compute the sum over $\ul\sigma$ in \eqref{eq:I4}. After we devoted so much work to make these permutations appear, this may seem to take us backwards. However, firstly, we will of course not go back to unitary quantities; and secondly, we will now reap the benefits of the fact that the strands of the original loop configuration $\ul\ell$ almost disappeared from our current expression of the flat contribution of a configuration of highest weights. Indeed, the only way in which they appear as such in \eqref{eq:I4} is through the transposition $(n-1\,  n)$, which is the computational reflection of the crossing of two strands at each vertex of the graph.

\subsection{A scalar endomorphism}
In order to sum over the permutations, it will be useful to change a little the way in which we write the interesting part, that is, the second half, of \eqref{eq:I4}. Let us start with a mild rewriting: since symmetric characters are central functions invariant by inversion, we can replace the last part of \eqref{eq:I4} by
\begin{equation}\label{eq:asommersigma}
\chi^{\lambda_{E}}\big(\sigma_{\cnw}\sigma_{\cne}\sigma_{\cse}^{-1}\sigma_{\csw}^{-1}\ \   \pi^{\lambda_{\cs}}\pi^{\lambda_{\cw}}(n-1\, n) \pi^{\lambda_{\cn}}\pi^{\lambda_{\cw}}\big).
\end{equation}

Then, for all quadruplet $(\lambda,\mu,\nu,\xi)$ of partitions such that $\lambda\leads \mu\leads \xi$ and $\lambda\leads\nu\leads \xi$, with $\xi\vdash n$, let us define
\begin{equation}\label{eq:defa}
a_{\lambda,\mu,\nu,\xi}= i_{\lambda\nu}i_{\nu\xi} \, (n-1\, n)\, i_{\xi\mu}i_{\mu\lambda}.
\end{equation}
\index{alam@$a_{\lambda,\mu,\nu,\xi}$, scalar endomorphism of $V^{\lambda}$}

\begin{lemma} \label{lem:ascalar} The endomorphism $a_{\lambda,\mu,\nu,\xi}$ is a multiple of the identity of $V^{\lambda}$.
\end{lemma}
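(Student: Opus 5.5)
The plan is to show that $a_{\lambda,\mu,\nu,\xi}$ is $\S_{n-2}$-equivariant and then apply Schur's lemma on the irreducible module $V^{\lambda}$.

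First, I will read off the domain and codomain of each factor in \eqref{eq:defa}. With $\xi\vdash n$, we have $\mu,\nu\vdash n-1$ and $\lambda\vdash n-2$. So $i_{\mu\lambda}:V^{\lambda}\to V^{\mu}$ is $\S_{n-2}$-equivariant, and $i_{\xi\mu}:V^{\mu}\to V^{\xi}$ is $\S_{n-1}$-equivariant, hence also $\S_{n-2}$-equivariant. In the middle, $(n-1\,n)$ acts on $V^{\xi}$ through $\rho_{\xi}$. The maps $i_{\nu\xi}:V^{\xi}\to V^{\nu}$ and $i_{\lambda\nu}:V^{\nu}\to V^{\lambda}$ are adjoints of equivariant isometries, so they are equivariant for $\S_{n-1}$ and $\S_{n-2}$ respectively. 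It follows that $a_{\lambda,\mu,\nu,\xi}$ is a well-defined endomorphism of $V^{\lambda}$.

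Next comes the key observation. The transposition $(n-1\,n)$ commutes with every element of $\S_{n-2}$, the subgroup fixing both $n-1$ and $n$. Hence $\rho_{\xi}((n-1\,n))$ commutes with $\rho_{\xi}(s)$ for all $s\in\S_{n-2}$. Now take $s\in\S_{n-2}$ and push $\rho_{\lambda}(s)$ through the composite, using the equivariance of each of the five factors:
\[
a_{\lambda,\mu,\nu,\xi}\,\rho_{\lambda}(s)=i_{\lambda\nu}i_{\nu\xi}\,\rho_{\xi}((n-1\,n))\,\rho_{\xi}(s)\,i_{\xi\mu}i_{\mu\lambda}=\rho_{\lambda}(s)\,a_{\lambda,\mu,\nu,\xi}.
\]
So $a_{\lambda,\mu,\nu,\xi}$ lies in the commutant of the $\S_{n-2}$-action on $V^{\lambda}$.

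Finally, $V^{\lambda}$ is an irreducible $\S_{n-2}$-module, and it is absolutely irreducible: it is real, and its complexification is the irreducible complex Specht module. Schur's lemma therefore gives that the commutant consists of the real scalars, so $a_{\lambda,\mu,\nu,\xi}$ is a multiple of ${\rm id}_{V^{\lambda}}$.

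The only delicate point is this last step, invoking Schur's lemma over $\R$. If one prefers, one can complexify $V^{\lambda}$, apply Schur's lemma there, and note that the scalar is real because all the maps are real. The small cases $n=2$, where $\lambda=\emptyset$ and $V^{\lambda}$ is one-dimensional, are trivial.
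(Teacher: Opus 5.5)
Your proposal is correct and follows the paper's argument: the four intertwiners are $\S_{n-2}$-equivariant, $(n-1\,n)$ commutes with $\S_{n-2}$, and Schur's lemma on the irreducible $\S_{n-2}$-module $V^{\lambda}$ gives a scalar. Your extra remark about invoking Schur's lemma over $\R$ (via absolute irreducibility, or by complexifying) is a detail the paper leaves implicit, and it is sound.
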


\begin{proof} The endomorphism $a_{\lambda,\mu,\nu,\xi}$ is defined as the composition of the five maps
\[V^{\lambda} \build{\longrightarrow}{}{i_{\mu\lambda}} V^{\mu} \build{\longrightarrow}{}{i_{\xi\mu}} V^{\xi} \build{\longrightarrow}{}{(n-1\; n)} V^{\xi} \build{\longrightarrow}{}{i_{\nu\xi}} V^{\nu} \build{\longrightarrow}{}{i_{\lambda\nu}} V^{\lambda}.\]
The two leftmost maps and the two rightmost maps are morphisms of $\S_{n-2}$-modules by definition. Since $(n-1\, n)$ commutes with every element of the subgroup $\S_{n-2}$ of $\S_{n}$, the middle map is also a morphism of $\S_{n-2}$-modules. Therefore, by Schur's lemma, the endomorphism is a scalar.
\end{proof}

The last step of the proof will consist in the explicit computation of this scalar endomorphism, which will ultimately produce the sines and cosines that appear in our main result. For the time being, let us make use of all the newly introduced notation to go one step further in the computation of \eqref{eq:I4}. We assume that the configuration of highest weights of which we compute the flat contribution is balanced.

\begin{proposition} The following equality holds:
\begin{equation} \label{eq:octo}
\eqref{eq:asommersigma}=
\text{\rm Tr } \ \left[ \phantom{\rule{1pt}{1.8cm}}\right.  
\begin{tikzcd}[row sep=small, column sep=small, yshift=1mm]
 & [-15pt] V^{\lambda_{\cn}} \arrow[dl, "i_{\lambda_{\cw}\lambda_{\cn}}"',twoheadrightarrow,pos=0.45] &  V^{\lambda_{\cn}} \arrow[l, "\raisebox{3pt}{$\sigma_{\cnw}$}"']& [-15pt] \\
V^{\lambda_{\cw}} \arrow[d,"a_{\lambda_{\cw},\lambda_{\cn},\lambda_{\cs},\lambda_{\ce}}\;"'] 
&& & V^{\lambda_{\ce}} \arrow[ul,"i_{\lambda_{\cn}\lambda_{\ce}}"',twoheadrightarrow,pos=0.35] \\
V^{\lambda_{\cw}} \arrow[dr, "i_{\lambda_{\cs}\lambda_{\cw}}"' {yshift=2pt},hook]&&& V^{\lambda_{\ce}} \arrow[u,"\;\sigma_{\cne}\sigma_{\cse}^{-1}"'] \\
& V^{\lambda_{\cs}} \arrow[r, "\sigma_{\csw}^{-1}"' {yshift=-1pt}] &  V^{\lambda_{\cs}} \arrow[ur, "i_{\lambda_{\ce}\lambda_{\cs}}"',hook]&
\end{tikzcd}
\left. \phantom{\rule{1pt}{1.8cm}}\right]
\end{equation}
\end{proposition}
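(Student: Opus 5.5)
We have to show that the character value \eqref{eq:asommersigma} equals the trace of the cyclic composition of maps in \eqref{eq:octo}. The plan is to write the character $\chi^{\lambda_{\ce}}(s)$ as $\Tr_{V^{\lambda_{\ce}}}\rho_{\lambda_{\ce}}(s)$ for the element
\[
s=\sigma_{\cnw}\sigma_{\cne}\sigma_{\cse}^{-1}\sigma_{\csw}^{-1}\,\pi^{\lambda_{\cs}}\pi^{\lambda_{\cw}}(n-1\,n)\pi^{\lambda_{\cn}}\pi^{\lambda_{\cw}} \in \C[\S_{n}].
\]
We then factor each projector through the isometric intertwiners using the second identity of \eqref{eq:ident}. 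Here one must remember which symmetric group each permutation lives in. With $n=n_{\ce}$, balance gives $\sigma_{\cne},\sigma_{\cse}\in\S_{n}$, since they are attached to edges whose right face is $\ce$. The permutations $\sigma_{\cnw},\sigma_{\csw}$ lie in $\S_{n-1}$, attached to edges with right faces $\cn$ and $\cs$. The elements $\pi^{\lambda_{\cn}},\pi^{\lambda_{\cs}}$ lie in $\C[\S_{n-1}]$, and $\pi^{\lambda_{\cw}}$ lies in $\C[\S_{n-2}]$.

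First, since $\pi^{\lambda_{\cn}}$ is central in $\C[\S_{n-1}]$ and idempotent, and similarly for $\pi^{\lambda_{\cs}}$, we can use cyclicity of the trace and centrality to move projectors next to the permutations living in the same group. The aim is to rewrite the character as
\[
\Tr\big(\sigma_{\cne}\sigma_{\cse}^{-1}\cdot\pi^{\lambda_{\cs}}\sigma_{\csw}^{-1}\pi^{\lambda_{\cs}}\cdot\pi^{\lambda_{\cw}}(n-1\,n)\pi^{\lambda_{\cw}}\cdot\pi^{\lambda_{\cn}}\sigma_{\cnw}\pi^{\lambda_{\cn}}\big)
\]
acting on $V^{\lambda_{\ce}}$. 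To get there:
\begin{itemize}
\item[(i)] cycle $\sigma_{\cnw}$ to the end;
\item[(ii)] use that $\pi^{\lambda_{\cw}}$ commutes with everything in $\S_{n-1}$, and that $\pi^{\lambda_{\cn}}$ and $\pi^{\lambda_{\cs}}$ commute with $\pi^{\lambda_{\cw}}\in\C[\S_{n-2}]$, to duplicate and reposition the projectors. This is exactly the ``duplicating $\pi^{\lambda_{\cw}}$'' trick already used for Figure \ref{fig:around}.
\end{itemize}

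Then we replace each sandwich by intertwiners via \eqref{eq:ident}:
\begin{itemize}
\item $\rho_{\lambda_{\ce}}(\pi^{\lambda_{\cn}}\sigma_{\cnw}\pi^{\lambda_{\cn}})=i_{\lambda_{\ce}\lambda_{\cn}}\rho_{\lambda_{\cn}}(\sigma_{\cnw})i_{\lambda_{\cn}\lambda_{\ce}}$, and similarly for the south sandwich with $\sigma_{\csw}^{-1}$;
\item the middle factor $\pi^{\lambda_{\cw}}(n-1\,n)\pi^{\lambda_{\cw}}$, restricted between $i_{\lambda_{\cn}\lambda_{\ce}}$ on the right and $i_{\lambda_{\ce}\lambda_{\cs}}$ on the left, becomes $i_{\lambda_{\cs}\lambda_{\ce}}\ldots$ after inserting $\rho_{\lambda_{\cn}}(\pi^{\lambda_{\cw}})=i_{\lambda_{\cn}\lambda_{\cw}}i_{\lambda_{\cw}\lambda_{\cn}}$ and the analogous factorisation on the $\cs$ side.
\end{itemize}
Collecting the compositions $i_{\lambda_{\cw}\lambda_{\cs}}i_{\lambda_{\cs}\lambda_{\ce}}(n-1\,n)i_{\lambda_{\ce}\lambda_{\cn}}i_{\lambda_{\cn}\lambda_{\cw}}$ yields exactly $a_{\lambda_{\cw},\lambda_{\cn},\lambda_{\cs},\lambda_{\ce}}$ from \eqref{eq:defa}. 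The remaining intertwiners and permutations then form the octagon read cyclically, and the trace is invariant under choosing where to start the cycle.

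The main obstacle is bookkeeping rather than ideas. It is easy to get the direction of an intertwiner or the order of $\sigma_{\cne}\sigma_{\cse}^{-1}$ wrong, so I would verify the final word against the octagon arrow by arrow. The one genuinely delicate point is justifying that a projector may be inserted or removed next to a permutation from a larger group. This is allowed only when the projector is adjacent to an intertwiner already landing in, or coming from, the corresponding submodule, because $i_{\mu\lambda}$ has image $\rho_\mu(\pi^\lambda)V^\mu$. I would handle it by never commuting projectors past $\sigma_{\cne},\sigma_{\cse}$ or $(n-1\,n)$, and only using idempotence together with \eqref{eq:ident}.
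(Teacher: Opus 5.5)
Your proposal is correct and follows essentially the paper's route. You write the character as a trace on $V^{\lambda_{\ce}}$, use centrality, idempotence and cyclicity to place the projectors, factor them through isometric intertwiners via the second identity of \eqref{eq:ident}, and recognise $a_{\lambda_{\cw},\lambda_{\cn},\lambda_{\cs},\lambda_{\ce}}$ in the middle; the paper does the same, except that it applies \eqref{eq:ident} directly to $\pi^{\lambda_{\cn}}\sigma_{\cnw}\sigma_{\cne}\sigma_{\cse}^{-1}\sigma_{\csw}^{-1}\pi^{\lambda_{\cs}}$ rather than forming your two sandwiches first.

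There are two slips to fix. First, $\pi^{\lambda_{\cw}}$ is central only in $\C[\S_{n-2}]$: it commutes with $(n-1\,n)$ and with $\pi^{\lambda_{\cn}},\pi^{\lambda_{\cs}}$, but not with all of $\S_{n-1}$. Your target rearrangement fortunately never uses that stronger claim. Second, the middle factor is $i_{\lambda_{\cs}\lambda_{\ce}}\,\rho_{\lambda_{\ce}}\big(\pi^{\lambda_{\cw}}(n-1\,n)\pi^{\lambda_{\cw}}\big)\,i_{\lambda_{\ce}\lambda_{\cn}}$, not sandwiched between the adjoints you wrote. The passage of $\rho_{\lambda_{\ce}}(\pi^{\lambda_{\cw}})$ to $V^{\lambda_{\cs}}$ and $V^{\lambda_{\cn}}$ then uses the $\S_{n-1}$-equivariance of these intertwiners.
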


The endomorphism on the right-hand side is written so as to correspond to the structure of the graph around a vertex, in a way that will be made more explicit by Figure \ref{fig:avs} below. The arrows are drawn so as to suggest which intertwiners are injective and which are surjective.

\begin{proof} Set $n=n_{\ce_{v}}$. Recall that $\sigma_{\cnw}$ and $\sigma_{\csw}$ belong to $\S_{n-1}$, whereas $\sigma_{\cne}$ and $\sigma_{\cse}$ belong to $\S_{n}$.
Using twice the second identity of \eqref{eq:ident}, we find
\[\rho_{\lambda_{\ce}}(\pi^{\lambda_{\cn}}\sigma_{\cnw}\sigma_{\cne}\sigma_{\cse}^{-1}\sigma_{\csw}^{-1}\pi^{\lambda_{\cs}})=i_{\lambda_{\ce}\lambda_{\cn}}\, \Big[\rho_{\lambda_{\cn}}(\sigma_{\cnw})\  i_{\lambda_{\cn}\lambda_{\ce}}\  \rho_{\lambda_{\ce}}(\sigma_{\cne}\sigma_{\cse}^{-1})\  i_{\lambda_{\ce}\lambda_{\cs}}\  \rho_{\lambda_{\cs}}(\sigma_{\csw}^{-1})\Big]\, i_{\lambda_{\cs}\lambda_{\ce}},\]
where the five maps between the square brackets are exactly those which appear on the right half of the octagon in \eqref{eq:octo}, including the two horizontal sides.
 
Using the cyclicity of the trace, we deduce from the last equality that 
\[\eqref{eq:asommersigma}=\Tr_{V^{\lambda_{\cn}}}\Big(\Big[\cdots\Big]\, i_{\lambda_{\cs}\lambda_{\ce}} \, 
\rho_{\lambda_{\ce}}\big(\pi^{\lambda_{\cs}}\pi^{\lambda_{\cw}}(n-1\, n) \pi^{\lambda_{\cn}}\pi^{\lambda_{\cw}}\big)\, i_{\lambda_{\ce}\lambda_{\cn}}\Big),
\]
with the same linear map between the square brackets as in the previous equation. Using the second identity of \eqref{eq:ident} with $x=\pi^{\lambda_{\cw}}$, we find that $\rho_{\lambda_{\ce}}(\pi^{\lambda_{\cs}}\pi^{\lambda_{\cw}})=i_{\lambda_{\ce}\lambda_{\cs}}\, \rho_{\lambda_{\cs}}(\pi^{\lambda^{\cw}})\, i_{\lambda_{\cs}\lambda_{\ce}}$ and $\rho_{\lambda_{\ce}}(\pi^{\lambda_{\cn}}\pi^{\lambda_{\cw}})=i_{\lambda_{\ce}\lambda_{\cn}}\, \rho_{\lambda_{\cn}}(\pi^{\lambda^{\cw}})\, i_{\lambda_{\cn}\lambda_{\ce}}$.
The same identity with $x=1$ yields $\rho_{\lambda_{\cs}}(\pi^{\lambda^{\cw}})=i_{\lambda_{\cs}\lambda_{\cw}}i_{\lambda_{\cw}\lambda_{\cs}}$ and $\rho_{\lambda_{\cn}}(\pi^{\lambda^{\cw}})=i_{\lambda_{\cn}\lambda_{\cw}}i_{\lambda_{\cw}\lambda_{\cn}}$. Therefore, we have
\begin{align*}
i_{\lambda_{\cs}\lambda_{\ce}} \, 
\rho_{\lambda_{\ce}}\big(\pi^{\lambda_{\cs}}\pi^{\lambda_{\cw}}(n-1\, n) \pi^{\lambda_{\cn}}\pi^{\lambda_{\cw}}\big)\, i_{\lambda_{\ce}\lambda_{\cn}}&\\
&\hspace{-2cm} =i_{\lambda_{\cs}\lambda_{\ce}}
i_{\lambda_{\ce}\lambda_{\cs}}i_{\lambda_{\cs}\lambda_{\cw}}i_{\lambda_{\cw}\lambda_{\cs}}i_{\lambda_{\cs}\lambda_{\ce}}\, \rho_{\lambda_{\ce}}\big((n-1\, n)\big) \, i_{\lambda_{\ce}\lambda_{\cn}} i_{\lambda_{\cn}\lambda_{\cw}}i_{\lambda_{\cw}\lambda_{\cn}} i_{\lambda_{\cn}\lambda_{\ce}} i_{\lambda_{\ce}\lambda_{\cn}}\\
&\hspace{-2cm} =i_{\lambda_{\cs}\lambda_{\cw}}\, \Big(i_{\lambda_{\cw}\lambda_{\cs}}i_{\lambda_{\cs}\lambda_{\ce}}\, \rho_{\lambda_{\ce}}\big((n-1\, n)\big) \, i_{\lambda_{\ce}\lambda_{\cn}} i_{\lambda_{\cn}\lambda_{\cw}}\Big)\, i_{\lambda_{\cw}\lambda_{\cn}}. 
\end{align*}
The expression between the brackets is exactly $a_{\lambda_{\cw},\lambda_{\cn},\lambda_{\cs},\lambda_{\ce}}$ and find the three remaining sides of the octagon.  
\end{proof}

\subsection{Performing the summation}

To understand how this proposition helps us sum over~$\ul\sigma$, let us draw the endomorphism depicted on the right-hand side of \eqref{eq:octo} on the surface. This is done in Figure \ref{fig:avs} below.

\begin{figure}[h!]
\begin{center}
\includegraphics{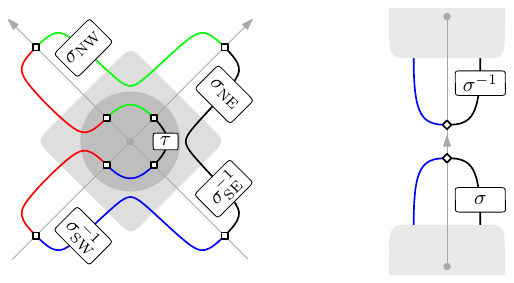}
\caption{\label{fig:avs} \small On the left, the linear map which appears in \eqref{eq:octo}. The strands of the loop configuration are there only for the sake of clarity, but they do not belong to the diagram. The colours of the strands indicates to which of the spaces $V^{\lambda_{\cw}}$ (red), $V^{\lambda_{\cn}}$ (green), $V^{\lambda_{\cs}}$ (blue), or $V^{\lambda_{\ce}}$ (black) the vectors belong. The labelled boxes indicate the action of a permutation, as usual. The small square boxes indicate the action of an isometric intertwiner. The grey rounded square separates graphically the contribution of the vertex from the contribution of the edges. The darker grey disk contains the map $a_{\lambda_{\cw},\lambda_{\cn},\lambda_{\cs},\lambda_{\ce}}$.\\
On the right, the contribution of an edge, consisting of two pieces of the contributions of each of its end points. 
}
\end{center}
\end{figure}

The right half of Figure \ref{fig:avs} shows that the summation over the permutations now takes place edge by edge. We will use the following formula, which is not unrelated to the Collins--\'Sniady formula, but is an  elementary consequence of the classical orthogonality relations: for all $\lambda\vdash n$ and all $v,w\in V^{\lambda}$, 
\begin{equation}\label{eq:symsum}
\frac{1}{n!}\sum_{\sigma\in \S_{n}} (\sigma v) \otimes (\sigma^{-1}w)=\frac{1}{d^{\lambda}} \, w\otimes v.
\end{equation}
Figure \ref{fig:symsum} presents a graphical representation of this formula, as well as the way in which it applies to the present situation.

\begin{figure}[h!]
\begin{center}
\includegraphics{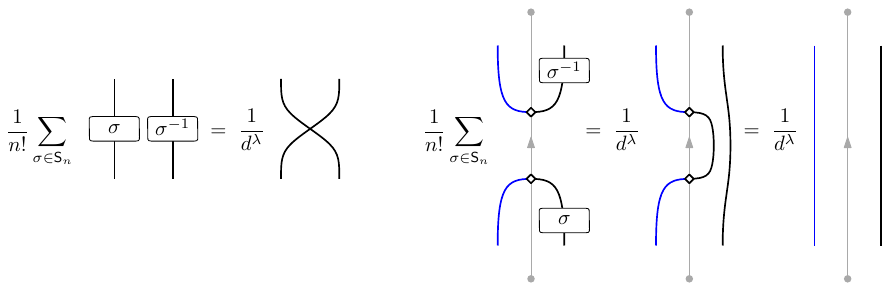}
\caption{\label{fig:symsum} \small A graphical representation of \eqref{eq:symsum}. On the right, the same formula, applied to the situation at hand. The partition $\lambda$ is the partition $\lambda_{\rf(e)}$, corresponding to the face located on the right of the edge under consideration.
}
\end{center}
\end{figure}

Let us now consider \eqref{eq:I4}. Recall that we denote by $\sLE$ the set of linear edges, so that $\sE$ is the disjoint union of $\sLE$ and $\sCE$, and let us reorganize the products over edges, thanks to some simplifications and the equality $\chi^{\lambda_{\rf_{e}}}\big(\pi^{\lambda_{\lf_{e}}}\big)=d^{\lambda_{\lf_{e}}}$, as
\begin{equation}\label{eq:reorga}
\prod_{e\in \sE}\frac{d^{\lambda_{\rf_{e}}}}{d_{\lambda_{\rf_{e}}}} \frac{1}{n_{\rf_{e}}!}
\prod_{e\in \sCE} n_{r_{e}}! \, d_{\lambda_{\rf_{e}}} \chi^{\lambda_{\rf_{e}}}\big(\pi^{\lambda_{\lf_{e}}}\big) 
=\prod_{e\in \sLE}\frac{d^{\lambda_{\rf_{e}}}}{d_{\lambda_{\rf_{e}}}} \frac{1}{n_{\rf_{e}}!}
\prod_{e\in \sCE}d^{\lambda_{\rf_{e}}}d^{\lambda_{\lf_{e}}}.
\end{equation}
Let us now sum over $\ul\sigma$. This 
\begin{enumerate}[\indent \sbullet]
\item absorbs the remaining factors $(n_{r_{e}}!)^{-1}$,
\item produces a factor $(d^{\lambda_{\rf_{e}}})^{-1}$ for each linear edge,
\item makes a loop appear for each external boundary component of each face that is not a circular edge, each loop contributing with the symmetric dimension $d^{\lambda_{F}}$ of the corresponding irrep.
\item Moreover, each loop passing by a vertex from the west side goes once around this vertex and this produces a scalar $a_{\lambda_{\cw},\lambda_{\cn},\lambda_{\cs},\lambda_{\ce}}$, for which we will use  the notation
\begin{equation}\label{eq:defa(v)}
a(v)=a_{\lambda_{\cw},\lambda_{\cn},\lambda_{\cs},\lambda_{\ce}}.
\end{equation}
\end{enumerate}

The symmetric dimensions produced by the circular edges in \eqref{eq:reorga} and by the loops in the third point of the enumeration above contribute together for each face $F$ a factor $(d^{\lambda_{F}})^{\x_{F}}$ which exactly cancels the denominator of the first product in \eqref{eq:I4}.

Similarly, the symmetric dimensions produced by the linear edges in the second point of the enumeration above cancel exactly the numerator of the first product in \eqref{eq:reorga}.

At this point, symmetric dimensions have completely disappeared and we obtain the following expression of the flat contribution of $\Lambda$: 
\begin{equation}\label{eq:I5avecLE}
\mathscr F(\Lambda)=\prod_{F\in \sF} (d_{\lambda_{F}})^{\e_{F}}
 \prod_{e\in \sLE}\frac{1}{d_{\lambda_{\rf_{e}}}} \prod_{v\in \sV} d_{\lambda_{\ce_{v}}} \prod_{v\in \sV} a(v).\end{equation}

We are only one small step away from the next proposition.

\begin{proposition} \label{prop:expI5} The flat contribution of a balanced non-negative configuration of highest weights $\Lambda$ is given by
\begin{equation}\label{eq:I5}
\mathscr F(\Lambda)=\prod_{F\in \sF} (d_{\lambda_{F}})^{\e_{F}}
\prod_{v\in \sV} (d_{\lambda_{\cn_{v}}}d_{\lambda_{\cs_{v}}})^{-\frac{1}{2}}
 \prod_{v\in \sV} a(v).
\end{equation}
\end{proposition}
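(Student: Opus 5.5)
Starting from \eqref{eq:I5avecLE}, it suffices to prove the purely combinatorial identity
\[
\prod_{e\in \sLE}\frac{1}{d_{\lambda_{\rf_{e}}}}\prod_{v\in \sV} d_{\lambda_{\ce_{v}}}=\prod_{v\in \sV}(d_{\lambda_{\cn_{v}}}d_{\lambda_{\cs_{v}}})^{-\frac12}.
\]
The plan is to distribute the factor $1/d_{\lambda_{\rf_e}}$ of each linear edge between its two endpoints, as a square root at each end, and then regroup vertex by vertex. Closed linear edges have both ends at the same vertex but still contribute two half-edges, so the bookkeeping is the same. Circular edges do not appear in the product over $\sLE$, so they play no role here.

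First I identify, for each vertex $v$, the face on the right of each of its four incident edges, using the cardinal labelling of Figure~\ref{fig:cardinaledges}. The outgoing edges $\cne_v$ and $\cnw_v$ bound $\cn_v$ on opposite sides, and the incoming edges $\cse_v$ and $\csw_v$ bound $\cs_v$ similarly. Since the configuration is balanced with $\lambda_{\cw}\leads\lambda_{\cn}\leads\lambda_{\ce}$ and $\lambda_{\cw}\leads\lambda_{\cs}\leads\lambda_{\ce}$, and $\lf_e\leads\rf_e$ along each edge, the right faces are forced. For $\cnw_v$, which separates $\cw$ and $\cn$, the right face is $\cn_v$. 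For $\cne_v$, which separates $\cn$ and $\ce$, it is $\ce_v$. For $\csw_v$ it is $\cs_v$, and for $\cse_v$ it is $\ce_v$. I would double-check this against the orientation conventions. In any case it is exactly what balance forces, because the face with the larger weight must be on the right.

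Next, writing $\prod_{e\in\sLE}d_{\lambda_{\rf_e}}^{-1}=\prod_{e\in\sLE}\prod_{\text{ends of }e}d_{\lambda_{\rf_e}}^{-1/2}$ and regrouping the half-edges by vertex gives, at each $v$, the factor
\[
(d_{\lambda_{\cn}}\,d_{\lambda_{\ce}}\,d_{\lambda_{\cs}}\,d_{\lambda_{\ce}})^{-\frac12}=d_{\lambda_{\ce_v}}^{-1}(d_{\lambda_{\cn_v}}d_{\lambda_{\cs_v}})^{-\frac12}.
\]
Multiplying by the factor $d_{\lambda_{\ce_v}}$ from the product over vertices cancels $d_{\lambda_{\ce_v}}$, which yields \eqref{eq:I5}.

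The only delicate point is the orientation check in the first step. If the identification of right faces came out differently, for example with $\cs$ and $\cn$ exchanged between the edges, the result would be unchanged by symmetry. What matters is that $\ce$ is the right face of exactly two of the four half-edges at $v$, namely the two adjacent to $\ce$, and that $\cn$ and $\cs$ each occur once. This follows from balance: the two edges bordering $\ce$ have $\ce$ on their right, since $\lambda_{\ce}$ extends both of its neighbours.
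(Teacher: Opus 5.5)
Your proof is correct and is essentially the paper's own argument. You split each factor $1/d_{\lambda_{\rf_e}}$ over $\sLE$ into two square roots attached to the half-edges, identify the right faces of the north-west, north-east, south-west and south-east edges at $v$ as $\cn_v$, $\ce_v$, $\cs_v$, $\ce_v$ (this identification is right under the orientation conventions), and cancel the two resulting factors $d_{\lambda_{\ce_v}}^{-1/2}$ against the $d_{\lambda_{\ce_v}}$ already present in \eqref{eq:I5avecLE}.
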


\begin{proof} In \eqref{eq:I5avecLE}, let us consider the product over the set $\sLE$ of linear edges, split each factor as the square of $1/\sqrt{d_{\lambda_{\rf_{e}}}}$, and assign each factor to one of the two halves of the edge $e$, respectively the one located near its initial point and the one near its final point. 
Reorganizing this product over linear edges as a product over vertices, we find, around each vertex, a contribution of each adjacent edge, or rather half-edge, namely:
\begin{enumerate}[\indent\sbullet]
\item from the north-west edge a factor $1/\sqrt{d_{\lambda_{\cn}}}$, 
\item from the south-west edge a factor $1/\sqrt{d_{\lambda_{\cs}}}$,
\item and from the north-east and south-east edges, a factor $1/\sqrt{d_{\lambda_{\ce}}}$ each.
\end{enumerate}
The dimensions associated to the east face cancel with the product over vertices already present in \eqref{eq:I5avecLE}, and the result follows.
\end{proof}

There remains to compute the scalar $a(v)$, and this will be the object of the last step of the proof. Before we delve into it, let us review, in the last few steps of the computation, the influence of the choices that we made when we fixed the isometric intertwiners. Recall that they were specified up to a sign, each choice affecting simultaneously an intertwiner and its adjoint.

The expression \eqref{eq:asommersigma} did not depend on the intertwiners, and it is comforting that the right-hand side of \eqref{eq:octo} does not either, for each intertwiner appears twice, once in its original form and once in adjoint form. It may therefore come as an unpleasant surprise that the scalar $a(v)$ does depend on the choices that we made. However, a choice that affects the values of $a(v)$ for some vertex $v$ also affects the value of $a(w)$ for some adjacent vertex $w$, and the product of $a(v)$ over all vertices does not depend on the choices.

It is also natural to wonder if the scalar $a(v)$ depends on the Euclidean structure that we chose on each symmetric module. Fortunately, it does not. Indeed, changing the inner product in $V^{\mu}$, for instance, affects $i_{\mu\lambda}$ and $i_{\xi\mu}$ by inverse factors, and leaves $a(v)$ unaffected.


\section{Step 7 --- Determination of the scalar and completion of the proof} \label{sec:detscal}
The last substantial step of the proof is the computation of $a(v)$. In order to do this, we are going to lift the sign indeterminacy of the isometric intertwiners, using the approach to the representations of symmetric groups developed by Okounkov and Vershik \cite{OkounkovVershik}. 

\subsection{The geometric situation}\label{sec:geom}
For the convenience of the reader, let us state the problem again and describe it from the point of view of the geometry of the irreps of the symmetric groups. An integer $n\geq 3$ is given, and four partitions: $\lambda\vdash n-2$, $\mu\vdash n-1$, $\nu\vdash n-1$, and $\xi \vdash n$. We assume that $\lambda\leads \mu \leads \xi$ and $\lambda \leads \nu \leads \xi$. With the notation of Section \ref{sec:branching}, we want to compute the endomorphism
\[a_{\lambda,\mu,\nu,\xi}= i_{\lambda\nu}i_{\nu\xi} \, (n-1\, n)\, i_{\xi\mu}i_{\mu\lambda} \in \End(V^{\lambda}),\]
of which we proved that it is a multiple of the identity (see Lemma \ref{lem:ascalar}). 

Let us examine step by step the action of each of the applications that make up this endomorphism. Firstly, $i_{\mu\lambda}$ takes $V^{\lambda}$ and applies it into $V^{\mu}$, to the unique sub-$\S_{n-2}$-module of~$V^{\mu}$ isomorphic to $V^{\lambda}$ (this submodule exists and is unique by virtue of the branching rule, see Proposition \ref{prop:branching}). Then $i_{\xi\mu}$ takes $V^{\mu}$ and applies it into $V^{\xi}$, to the unique sub-$\S_{n-1}$-module of~$V^{\xi}$ isomorphic to $V^{\mu}$. There, the transposition $(n-1\, n)$ acts. Since this transposition commutes to $\S_{n-2}$, its action on $V^{\xi}$ leaves each $\S_{n-2}$-isotypical component stable. Therefore, it sends our original space $V^{\lambda}$, embedded in $V^{\xi}$ by $i_{\xi\mu}i_{\mu\lambda}$, into the sum of all sub-$\S_{n-2}$-modules of~$V^{\xi}$ isomorphic to $V^{\lambda}$.

To go further, we need to understand this isotypical component, the $\lambda$-isotypical component of $V^{\xi}$. In other words, we need to understand the multiplicity of $V^{\lambda}$ in $V^{\xi}$. Once again, the answer is given by the branching rule: the isotypical component is the direct sum of a number of copies of $V^{\lambda}$ equal to the number of partitions $\kappa\vdash n-1$ such that $\lambda\leads \kappa \leads \xi$. Since our assumptions rule out the possibility that it is $0$, this number can be $1$ or $2$. 

The case where this number is $1$ is the simplest. It is the case where the two boxes of the skew diagram $\xi/\lambda$ are adjacent, on the same row or on the same column. In this case, $(n-1\, n)$ acts as a scalar involution on $i_{\xi\mu}i_{\mu\lambda}V^{\lambda}$, that is, by the identity or minus the identity. Then, we must have $\nu=\mu$, and the linear maps $i_{\nu\xi}$ and $i_{\lambda\nu}$ take us back into $V^{\lambda}$. Finally, in this case, $a_{\lambda,\mu,\nu,\xi}=\pm{\rm id}_{V^{\lambda}}$ and we will see that the sign depends in a simple way on the shape of the skew diagram $\xi/\lambda$. Note that in this case, as in any case where $\nu=\mu$, the scalar $a_{\lambda,\mu,\nu,\xi}$ does not depend on the choice of isometric intertwiners.

The case where the multiplicity of $V^{\lambda}$ in $V^{\xi}$ is $2$ is more interesting. In this case, $\mu$ is one of the two partitions that fit between $\lambda$ and $\xi$ in the leading order. Let $\kappa$ be the other. Of course,~$\nu$ must be either $\mu$ or $\kappa$, and we will distinguish these two sub-cases (see also Figure \ref{fig:caspoura} below). For the moment, set $W=i_{\xi\mu}i_{\mu\lambda}V^{\lambda}$ and $W'=i_{\xi\kappa}i_{\kappa\lambda}V^{\mu}$, so that the $\lambda$-isotypical component of~$V^{\xi}$ is $W\oplus W'$.

\begin{figure}[h!]
\begin{center}
\includegraphics{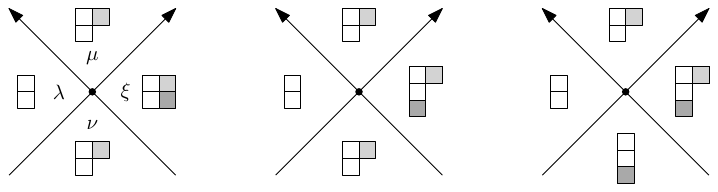}
\caption{\label{fig:caspoura} \small This figure illustrates the different situations for the partitions $\lambda,\mu,\nu,\xi$ with respect to the computation of the scalar $a(v)$. In the first case, $\mu$ and $\nu$ are determined by $\lambda$ and $\xi$, and must be equal. In the second case, given $\lambda$ and $\xi$, $\mu$ and $\nu$ could be different, but they are equal. In the third case, $\mu$ and $\nu$ are different. 
}
\end{center}
\end{figure}

On $W\oplus W'$, the transposition $(n-1\, n)$ acts as an isometric involution commuting to $\S_{n-2}$, the most general form of which is given blockwise by 
\[
\begin{blockarray}{ccc}
  &{\scriptstyle W} & {\scriptstyle W'} \\
\begin{block}{c(ll)}
    {\scriptstyle W}  & \cos \theta \  {\rm id}_{W}  & \phantom{-}\sin \theta \ j_{WW'}   \\
  {\scriptstyle W'} & \sin \theta\  j_{W'W}  & -\cos \theta\  {\rm id}_{W'}  \\
\end{block}
\end{blockarray}
\]
where $j_{W'W}:W\to W^{'}$ is an isometric isomorphism of $\S_{n-2}$-modules, $j_{WW'}$ its adjoint, and $\theta$ is a real number.

If $\nu=\mu$, then the last two operations $i_{\lambda\nu}$ and $i_{\nu\xi}$ that make up $a_{\lambda,\mu,\nu,\xi}$ take $W$ back to $V^{\lambda}$, and the scalar that we are looking for is $\cos \theta$. We will compute it, and it is entirely determined by $\lambda, \mu,\nu,\xi$.

In contrast, if $\nu\neq \mu$, then $i_{\lambda\nu}i_{\nu\xi}$ takes the submodule $W'$ back to $V^{\lambda}$, and the part of the matrix above that is relevant to us is the first entry of the second row, the endomorphism $\sin \theta \, j_{W'W}$. However, $j_{W'W}$ is determined only up to a sign. While what we said is enough to ensure that the scalar that we are looking for is $\pm \sin \theta$, we will need some care to determine which is the right sign.

\subsection{Jucys--Murphy elements and Gelfand--Zetlin lines} \label{sec:JMGZ} We will use a part of the approach of Okounkov--Vershik to the representations of symmetric groups, which we start by recalling. 

Consider a partition $\lambda$ of an integer $n$. According to the branching rule (see Proposition \ref{prop:branching}), the space $V^{\lambda}$ splits as a direct sum of subspaces, each of which is isomorphic, as a $\S_{n-1}$-module, to $V^{\lambda_{n-1}}$, where $\lambda_{n-1}$ is a partition of $n-1$ such that $\lambda_{n-1}\leads \lambda$. Iterating this application of the branching rule, we arrive at a splitting of $V^{\lambda}$ as a direct sum of irreducible $\S_{1}$-modules, which therefore must be lines. The set of these lines is in one-to-one correspondence with the set of sequences $\lambda_{1},\ldots,\lambda_{n-1}$ such that 
\[\lambda_{1}\leads \lambda_{2}\leads \ldots \leads \lambda_{n-1}\leads \lambda_{n}=\lambda.\]
A convenient way of recording such a sequence consists in writing, in each box of a Young diagram of shape $\lambda$, the index of the first partition of the sequence which contains this box. By doing this, one writes all the integers from $1$ to $n$ in the boxes of the Young diagram, in a way that is increasing down the columns and the rows. This is called a {\em standard tableau} of shape $\lambda$, and we will write $T\Vdash \lambda$ to indicate that $T$ is a standard tableau of shape $\lambda$.

We just explained how the branching rule implies that $V^{\lambda}$ splits in a canonical way as a direct sum of lines indexed by the set of standard tableaux of shape $\lambda$. These lines are called the {\em Gelfand--Zetlin lines} of $V^{\lambda}$. Let us mention that the Gelfand--Zetlin splitting is orthogonal with respect to the Euclidean structure of $V^{\lambda}$. 
\index{LT@$L_{T}$, Gelfand--Zetlin line}

Closely related to the Gelfand--Zetlin lines are the Jucys--Murphy elements. For all $k\geq 1$, let us define the $k$-th Jucys--Murphy element $X_{k}\in \C[\S_{k}]$ to be $0$ if $k=1$, and to be the sum of transpositions
\[X_{k}=(1\, k)+\ldots + (k-1 \, k)\]
for $k\geq 2$. We can see $X_{k}$ as belonging to $\C[\S_{n}]$ for any $n\geq k$. 

\index{Xk@$X_{k}$, Jucys--Murphy element}
 
Consider a tableau $T$ of shape $\lambda$ and choose $k\in \{1,\ldots,n\}$. There is a unique box of $T$ containing the integer $k$. If this box is located on row $i$ and column $j$, then we define the {\em content} of this box as the integer $j-i$, and write
\[\cont_{T}(k)=j-i.\]
Because this integer will play a crucial role for us, let us describe it in a way that is less dependent on the graphical representation of Young diagrams. For this, let $\lambda_{1}\leads \ldots \leads \lambda_{n}=\lambda$ be the increasing chain of partitions corresponding to $T$. For the sake of the case where $k=1$, set $\lambda_{0}=(0)$. Then for some integer $i\geq 1$,
\[\lambda_{k-1}=(a_{1},\ldots,a_{i},\ldots) \ \text{ and } \ \lambda_{k}=(a_{1},\ldots,a_{i}+1,\ldots).\]
Then, recalling the notation introduced in \eqref{eq:defcont},
\[\cont_{T}(k)=a_{i}+1-i=\cont(\lambda_{k}/\lambda_{k-1}).\]

It turns out that for all $n\geq 1$, the Jucys--Murphy elements $X_{1},\ldots,X_{n}$ generate a maximal abelian subalgebra of $\C[\S_{n}]$, and that this subalgebra is the set of elements of $\C[\S_{n}]$ which, in each irreducible $\S_{n}$-module, stabilize every Gelfand--Zetlin line. 

Here is a precise statement of the part of this information that we need.

\begin{proposition} \label{prop:caraclambda} Let $W$ be an $\S_{n}$-module. 
\begin{enumerate}[1.]
\item Suppose that $W$ is isomorphic to $V^{\lambda}$. Let  $W=\bigoplus_{T\Vdash \lambda} L_{T}$
be the Gelfand--Zetlin splitting of~$W$.  Then for every tableau $T$ and every $k\in \{1,\ldots,n\}$, the Jucys--Murphy element $X_{k}$ acts on the line $L_{T}$ by multiplication by $\cont_{T}(k)$.
\item Suppose that there exists a splitting in lines $W=\bigoplus_{T\Vdash \lambda} L_{T}$
indexed by standard tableaux of shape $\lambda$ such that for every tableau $T$ and every $k\in \{1,\ldots,n\}$, the Jucys--Murphy element~$X_{k}$ acts on the line $L_{T}$ by multiplication by $\cont_{T}(k)$. Then $W$ is isomorphic to $V^{\lambda}$, and the splitting is the Gelfand--Zetlin splitting.
\end{enumerate}
\end{proposition}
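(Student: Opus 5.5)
The plan is an induction on $n$. Both parts rest on two classical facts. First, $X_{k}=Z_{k}-Z_{k-1}$, where $Z_{k}=\sum_{1\leq i<j\leq k}(i\,j)$ is the sum of all transpositions of $\S_{k}$. Second, $Z_{k}$ is central in $\C[\S_{k}]$ and acts on $V^{\mu}$, for $\mu\vdash k$, by the scalar $\sum_{\square\in\mu}\cont(\square)$. I would justify this scalar by the Frobenius formula $\chi^{\mu}((1\,2))\binom{k}{2}/d^{\mu}=\sum_{\square}\cont(\square)$. If one wants to avoid citing Frobenius, one can instead induct by restricting $V^{\mu}$ to $\S_{k-1}$ via Proposition \ref{prop:branching}.

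For part 1, fix a standard tableau $T$ with chain $\lambda_{1}\leads\ldots\leads\lambda_{n}=\lambda$. By construction of the Gelfand--Zetlin splitting, $L_{T}$ lies in the image of $V^{\lambda_{k}}$ inside $V^{\lambda}$ under the composition of the intertwiners $i_{\lambda_{j+1}\lambda_{j}}$, for every $k$. Hence $Z_{k}$ acts on $L_{T}$ by $\sum_{\square\in\lambda_{k}}\cont(\square)$ and $Z_{k-1}$ acts by $\sum_{\square\in\lambda_{k-1}}\cont(\square)$. Taking the difference, $X_{k}$ acts by the content of the single box $\lambda_{k}/\lambda_{k-1}$, which is $\cont_{T}(k)$. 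This part is essentially formal once the central scalar of $Z_{k}$ is known.

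For part 2, decompose $W=\bigoplus_{\mu\vdash n}m_{\mu}V^{\mu}$ and take its Gelfand--Zetlin splitting. By part 1, each Gelfand--Zetlin line $L_{T'}$ of $W$, with $T'\Vdash\mu$, is a joint eigenline of $(X_{1},\ldots,X_{n})$ with eigenvalue vector $(\cont_{T'}(1),\ldots,\cont_{T'}(n))$. The key combinatorial lemma is that the content vector determines the standard tableau, hence also its shape. I would prove this by induction on $k$: knowing $\lambda_{k-1}$, the addable boxes of $\lambda_{k-1}$ lie on pairwise distinct diagonals, so the content $c_{k}$ singles out exactly one of them. Since the $X_{k}$ are simultaneously diagonalisable on $W$, the joint eigenspace for the content vector of $T$ is, on one hand, the sum of the Gelfand--Zetlin lines $L_{T'}$ with $T'=T$, which has dimension $m_{\lambda}$. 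On the other hand, the hypothesis of part 2 shows that this eigenspace contains $L_{T}$. Moreover, every joint eigenvector of $W$ lies in some given line $L_{T}$, because those lines span $W$ and carry distinct eigenvalue vectors. Therefore $m_{\mu}=0$ for $\mu\neq\lambda$, and $m_{\lambda}=1$ by comparing dimensions, both sides having $\#\{T\Vdash\lambda\}$ as dimension. So $W\simeq V^{\lambda}$, and each given $L_{T}$ equals the one-dimensional joint eigenspace, which is the Gelfand--Zetlin line.

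I expect the main obstacle to be establishing the central scalar of $Z_{k}$ without heavy machinery. If the Frobenius formula is not admitted, I would instead run an induction on the spectrum of $X_{n}$ using $s_{n-1}X_{n-1}s_{n-1}+s_{n-1}=X_{n}$, where $s_{n-1}=(n-1\,n)$, in the spirit of Okounkov--Vershik. That route is longer, so I would prefer to cite the classical value of the central character of the class sum of transpositions.
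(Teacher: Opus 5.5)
The paper gives no proof of this proposition: it is recalled as background from Okounkov--Vershik \cite{OkounkovVershik}. Your argument therefore has to stand on its own, and it does.

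\textbf{Part~1.} This is correct. You use $X_k=Z_k-Z_{k-1}$, and the line $L_T$ lies in the copy of $V^{\lambda_k}$ for every $k$. The difference of the two central scalars is then the content of the box $\lambda_k/\lambda_{k-1}$. The case $k=1$ is covered by $X_1=0=\cont_T(1)$.

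\textbf{Part~2.} This is also correct, and it is essentially the argument used in Okounkov--Vershik. Content vectors separate standard tableaux of all shapes, so the joint eigenspaces of $(X_1,\ldots,X_n)$ on $W$ are spanned by Gelfand--Zetlin lines of the irreducible summands. A splitting of $W$ into joint eigenlines carrying exactly the content vectors of shape $\lambda$ then forces $m_\mu=0$ for $\mu\neq\lambda$ and $m_\lambda=1$.

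\textbf{Where your route differs.} The real difference from the cited source is in Part~1. Okounkov and Vershik obtain the joint spectrum of the Jucys--Murphy elements intrinsically, with no character theory. They use the relation $X_k=\tau_kX_{k-1}\tau_k+\tau_k$, with $\tau_k=(k-1\,k)$, which is the same relation the paper uses in the proof of Proposition~\ref{prop:GZbasis}. They then analyse $\tau_k,X_{k-1},X_k$ on invariant pieces of dimension at most two, and identify the branching graph with the Young graph along the way. Your route is much shorter but imports two classical facts:
\begin{itemize}
\item the branching rule (Proposition~\ref{prop:branching}, which the paper assumes anyway);
\item Frobenius's formula $\binom{k}{2}\chi^{\mu}((1\,2))/d^{\mu}=\sum_{\square\in\mu}\cont(\square)$.
\end{itemize}
Citing Frobenius's formula is legitimate, provided you take it from a proof that does not itself go through Jucys--Murphy elements, such as the Frobenius character formula.

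\textbf{Caveat on avoiding Frobenius.} The branching rule alone does not determine the scalar $z_\mu$ by which $Z_k$ acts on $V^{\mu}$. Restricting to $\S_{k-1}$ only tells you that $X_k=Z_k-Z_{k-1}$ acts on each constituent $V^{\nu}$, $\nu\leads\mu$, by $z_\mu-z_\nu$, with $z_\mu$ still unknown. Adding that all transpositions are conjugate gives the recursion $\frac{k-2}{k}\,z_{\mu}\,d^{\mu}=\sum_{\nu\leads\mu}d^{\nu}z_{\nu}$. To conclude from it, you would still need the nontrivial identity $\sum_{\nu\leads\mu}d^{\nu}\cont(\mu/\nu)=\frac{2}{k}\,d^{\mu}\sum_{\square\in\mu}\cont(\square)$. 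So if Frobenius is not admitted, your fallback has to be the Okounkov--Vershik induction you mention at the end, not the branching rule.
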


\subsection{Gelfand--Zetlin bases and isometric intertwiners}
Consider a partition $\lambda$ of $n$. The Gelfand--Zetlin lines of $V^{\lambda}$ are orthonormal, and almost yield an orthonormal basis of $V^{\lambda}$, that is, up to multiplication of each vector by $\pm 1$. We will go one step further and specify an almost canonical basis of $V^{\lambda}$, unique up to global multiplication by a sign.

Let $T$ be a standard tableau of shape $\lambda$. Let $k\in \{2,\ldots,n\}$ be an integer. According to Lemma~\ref{lem:contpasegal}, the integers $\cont_{T}(k)$ and $\cont_{T}(k-1)$ are not equal, and we define an angle $\theta_{T,k}\in [0,\pi]$ without ambiguity by the relation 
\[\cos (\theta_{T,k})=\frac{1}{\cont_{T}(k)-\cont_{T}(k-1)}.\]

Let us denote by $(k-1\, k)T$ the tableau obtained from $T$ by exchanging the integers $k-1$ and $k$. An elementary verification shows that $(k-1\, k)T$ is still a standard tableau, unless $k-1$ and $k$ are in neighbouring boxes of $T$, on the same row or on the same column. This in turns happens if and only if $|\cont_{T}(k)-\cont_{T}(k-1)|=1$. Therefore, either $|\cont_{T}(k)-\cont_{T}(k-1)|=1$, or $(k-1\, k)T$ is a standard tableau.

In the following proposition, we will consider vectors $v_{T}$ indexed by standard tableaux, and we will write $v_{(k-1\, k)T}$ although, according to what we just said, $(k-1\, k)T$ is not always a standard tableau. Let us agree that in this case, $v_{(k-1\, k)T}$ denotes the zero vector.

\index{vT@$v_{T}$, Gelfand--Zetlin vector}

\begin{proposition} \label{prop:GZbasis} There exists an orthonormal basis $(v_{T}: T \Vdash \lambda)$ of $V^{\lambda}$ indexed by standard tableaux of shape $\lambda$ such that for every tableau $T$ and every integer $k\in \{2,\ldots,n\}$, 
\begin{equation}\label{eq:defaction}
(k-1 \, k) v_{T}=\cos (\theta_{T,k})\  v_{T}+\sin( \theta_{T,k}) \ v_{(k-1\, k)T}.
\end{equation}
The basis $(v_{T}: T \Vdash \lambda)$ is unique up to global multiplication by a scalar of modulus $1$. Moreover, for every $T$, the vector $v_{T}$ belongs to the Gelfand--Zetlin line $L_{T}$.
\end{proposition}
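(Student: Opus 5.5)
We will construct the basis from the Gelfand--Zetlin lines and the Jucys--Murphy elements, and then fix signs along paths between tableaux. The first point is a local analysis of how $s_k=(k-1\,k)$ acts on Gelfand--Zetlin lines. Using the relations $s_kX_{k-1}s_k=X_k-s_k$ and the fact that $s_k$ commutes with $X_j$ for $j\neq k-1,k$, we find the following. For any $T$ and any nonzero $w\in L_T$, the space $\mathrm{span}(w,s_kw)$ is stable under $X_{k-1}$, $X_k$ and $s_k$. Moreover $s_kw-\frac{1}{c_k-c_{k-1}}w$ lies in the common eigenspace of the $X_j$ with eigenvalues $\cont_T(j)$ for $j\neq k-1,k$, and with the values $\cont_T(k)$, $\cont_T(k-1)$ exchanged, where $c_j=\cont_T(j)$.

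By Proposition \ref{prop:caraclambda}, this eigenspace is $L_{(k-1\,k)T}$ when $(k-1\,k)T$ is standard, and is zero otherwise. In the second case $s_kw=\pm w$ with $\cos\theta_{T,k}=\pm1$. In the first case, $s_k$ is an orthogonal involution, since the $\S_n$-action is isometric. Together with orthogonality of the Gelfand--Zetlin lines, this forces $s_kw=\cos\theta_{T,k}\,w+\epsilon\sin\theta_{T,k}\,w'$ for a unit vector $w'\in L_{(k-1\,k)T}$, where $w$ is a unit vector and $\epsilon=\pm1$. This is the standard Young orthogonal form computation, and it uses only Proposition \ref{prop:caraclambda}.

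We then fix signs as follows. We choose a unit vector $v_{T_0}\in L_{T_0}$ for the row-reading tableau $T_0$, and define $v_T$ for the other tableaux by induction on the number of inversions of $T$ relative to $T_0$. For $T\neq T_0$ there is some $k$ with $T=(k-1\,k)T'$, where $T'$ has fewer inversions, namely $k-1$ lies strictly later in reading order in $T$. We set $v_T$ to be the unique unit vector of $L_T$ such that $\sin\theta_{T',k}\,v_T$ is the $L_T$-component of $s_kv_{T'}$. Here $\sin\theta_{T',k}>0$, because $(k-1\,k)T'$ standard means $|\cont_{T'}(k)-\cont_{T'}(k-1)|\geq 2$.

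The main obstacle is to show that this choice does not depend on which $k$ is used, so that \eqref{eq:defaction} holds for every pair $(T,k)$ simultaneously. Once the definition is independent of choices, the relation for the reverse pair $(T,k)$ with $(k-1\,k)T=T'$ follows from the symmetric form of the $2\times2$ orthogonal involution on $L_{T'}\oplus L_T$. For the independence we plan to argue with reduced words. The tableaux are exactly the elements $w\cdot T_0$ for $w$ among the minimal coset representatives relative to the row stabilizer, and the inductive definition amounts to expressing $v_T$ as $\prod\sin^{-1}$ times the component of $\rho(w)v_{T_0}$ along $L_T$. For distinct reduced words of $w$, related by braid moves, we must check that the resulting sign is the same. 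This reduces to two local checks. First, commuting $s_j$ and $s_k$ with $|j-k|\geq2$ act independently, since their $2\times2$ blocks involve disjoint content pairs. Second, for the braid relation $s_ks_{k+1}s_k=s_{k+1}s_ks_{k+1}$ on the span of at most six Gelfand--Zetlin lines, we use the explicit matrices with entries $\cos$ and $\sin$ determined by the contents. Alternatively, and more simply, one can define $v_T$ as the normalized projection of $\rho(w_T)v_{T_0}$ onto $L_T$, where $w_T$ is the unique minimal-length permutation taking $T_0$ to $T$. With this definition, \eqref{eq:defaction} follows from a length argument: when $\ell(s_kw_T)=\ell(w_T)+1$, the lower-order lines do not contribute. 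We will try this second route first.

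Finally, uniqueness follows because \eqref{eq:defaction} with positive sines determines $v_T$ from $v_{T'}$. Since the graph on tableaux whose edges are the $s_k$-moves is connected, all the $v_T$ are determined by $v_{T_0}$, which is fixed up to a unit scalar. Each $v_T$ lies in $L_T$ by construction.
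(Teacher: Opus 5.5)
Your existence argument is correct, but it runs in the opposite direction from the paper's.

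\textbf{The paper's route.} The paper builds Young's orthogonal form on an abstract space with basis $(v_{T})$. It states, without details, that these operators satisfy the Coxeter relations, and derives $X_{k}v_{T}=\cont_{T}(k)v_{T}$ by induction from $X_{k}=\tau_{k}X_{k-1}\tau_{k}+\tau_{k}$. It then uses part~2 of Proposition~\ref{prop:caraclambda} to identify that space with $V^{\lambda}$. Uniqueness is Schur's lemma applied to the map between two such bases.

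\textbf{Your route.} You work inside $V^{\lambda}$ and use part~1 to derive the $2\times 2$ blocks on $L_{T}\oplus L_{(k-1\,k)T}$. This implicitly uses the easy fact that the content vector of a standard tableau determines the tableau, so that the swapped joint eigenspace is $L_{(k-1\,k)T}$ or $0$. You then fix signs by taking $v_{T}$ to be the normalised $L_{T}$-component of $\rho(w_{T})v_{T_{0}}$; the permutation $w_{T}$ is in fact unique, since $\S_{n}$ acts freely on fillings. Your second route does close:
\begin{itemize}
\item By induction on length, $\rho(w)v_{T_{0}}\in\bigoplus_{\ell(w_{S})\leq\ell(w)}L_{S}$.
\item Hence, when $\ell(s_{k}w_{T})=\ell(w_{T})+1$, the $L_{s_{k}T}$-component of $\rho(s_{k}w_{T})v_{T_{0}}$ comes only from $s_{k}$ applied to the positive multiple of $v_{T}$. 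This gives \eqref{eq:defaction} with $+\sin\theta_{T,k}$.
\item The reverse case follows from the symmetric form of the $2\times 2$ involution.
\item Non-vanishing of the top component follows by the same induction, since every standard $T\neq T_{0}$ has some $k$ read before $k-1$, and then $(k-1\,k)T$ is standard and shorter.
\end{itemize}

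\textbf{Comparison.} Your route never has to check braid relations, because they hold automatically once $\rho$ is a representation; that check is precisely the step the paper leaves to the reader. The paper's route gives an explicit model and relies only on the converse characterisation. Your first route, checking braid moves on reduced words, would essentially redo the omitted Coxeter computation, so preferring the second is right.

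\textbf{A gap in uniqueness.} Propagating along $s_{k}$-moves shows that any basis satisfying \eqref{eq:defaction} is determined by its vector indexed by $T_{0}$. You do not show that this vector must lie in $L_{T_{0}}$ for an arbitrary such basis, yet ``fixed up to a unit scalar'' depends on it. There are two easy fixes:
\begin{itemize}
\item Run the induction $X_{k}v_{T}=\cont_{T}(k)v_{T}$ directly from \eqref{eq:defaction}. This forces every $v_{T}$ into $L_{T}$.
\item Argue as the paper does: the linear map sending one such basis to the other commutes with every $(k-1\,k)$. It is therefore a unitary $\S_{n}$-endomorphism of $V^{\lambda}$, hence a scalar.
\end{itemize}
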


In the case where $c_{T}(k)-c_{T}(k-1)=\pm 1$, we decided that $v_{(k-1\, k)T}=0$, but $\sin (\theta_{T,k})=0$ anyway, and the equality \eqref{eq:defaction} reduces to $(k-1 \, k) v_{T}=\cos (\theta_{T,k}) \ v_{T}=\pm v_{T}$. 

Although we will not really use this remark, let us point out that an orthonormal basis $(v_{T}: T \Vdash \lambda)$ of $V^{\lambda}$ such that each vector $v_{T}$ belongs to the Gelfand--Zetlin line $L_{T}$ satisfies \eqref{eq:defaction} if and only if 
\begin{equation}\label{eq:scalpos}
\forall T \Vdash \lambda, \ \forall \tau=(k-1\, k) \text{ with } k\in \{2,\ldots,n\}, \ \ \langle \tau v_{T},v_{\tau T}\rangle_{V^{\lambda}} \geq 0.
\end{equation}

The substance of Proposition \ref{prop:GZbasis} is already present in the work \cite{OkounkovVershik} of Okounkov and Vershik, and the proof is inspired by their presentation.

\begin{proof} For every $k\in \{2,\ldots,n\}$, let us define $\tau_{k}=(k-1\, k)$. The group $\S_{n}$ is generated by $\tau_{2},\ldots,\tau_{n}$ with the Coxeter relations $\tau_{k}^{2}=1$ for all $k\in \{2,\ldots,n\}$, $(\tau_{k}\tau_{l})^{2}=1$ for all $k,l\in \{2,\ldots,n\}$ such that $|k-l|\geq 2$, and $(\tau_{k}\tau_{k+1})^{3}=1$ for all $k\in \{2,\ldots,n-1\}$.

Let $W$ be a vector space with basis $(v_{T}:T\Vdash \lambda)$. For each $k\in \{2,\ldots,n\}$ 
let $s_{k}$ be the linear transformation of $W$ which for every tableau $T$ sends $v_{T}$ to the right-hand side of \eqref{eq:defaction}, with the convention that $v_{\tau_{k}T}=0$ if $\tau_{k}T$ is not a standard tableau.

To show that the map $\tau_{k}\mapsto s_{k}$ extends to a linear action of $\S_{n}$ on $W$, it suffices to show that the operators $s_{2},\ldots,s_{k}$ satisfy the Coxeter relations in $\GL(W)$. This is a somewhat tedious, but completely elementary computation, which we will spare the reader.

Once this is done, we have a linear representation of $\S_{n}$ on $W$, which satisfies \eqref{eq:defaction}. Let us endow $W$ with the inner product for which the basis $(v_{T}:T\Vdash \lambda)$ is orthonormal. The linear transformations $s_{2},\ldots,s_{k}$ are unitary, and so is the action of $\S_{n}$.

In order to prove that $W$ is a $\lambda$-irrep of $\S^{n}$, we use the characterization given by Proposition~\ref{prop:caraclambda}. For this, we compute the action of the Jucys--Murphy elements of $\S_{n}$ on the vectors~$v_{T}$. The key to this computation is the fact that for all $k\in \{2,\ldots,n\}$, the transposition $\tau_{k}$ and the Jucys--Murphy elements $X_{k-1}$ and $X_{k}$ are related by the equality
\[X_{k}=\tau_{k}X_{k-1}\tau_{k}+\tau_{k}.\]
From this equality, the fact that $X_{1}=0$ and the relation \eqref{eq:defaction}, one checks by induction on $k$ that for every standard tableau $T$, the relation $X_{k}v_{T}=\cont_{T}(k)v_{T}$ holds. 

Finally, we have constructed a $\lambda$-irrep $W$ of $\S_{n}$ with the desired properties. We can now transport the basis $(v_{T}:T\Vdash \lambda)$ from $W$ to $V^{\lambda}$ by a unitary isomorphism of $\S_{n}$-modules.

If two bases of $V^{\lambda}$ satisfy \eqref{eq:defaction}, then the linear isomorphism of $V^{\lambda}$ which sends one to the other is a unitary isomorphism of $\S_{n}$-module, thus, by Schur's lemma, a scalar of modulus $1$. 
\end{proof}

For each partition $\lambda$, let us endow $V^{\lambda}$ with one of the two bases specified by Proposition \ref{prop:GZbasis}.
We will now use these bases to make a choice of isometric intertwiners. 

Let us consider two partitions $\lambda\vdash n-1$ and $\mu\vdash n$ such that $\lambda \leads \mu$. We want to define $i_{\mu\lambda}:V^{\lambda}\to V^{\mu}$ and we do this by specifying the image by $i_{\mu\lambda}$ of the vector $v_{T}$ for each standard tableau $T$ of shape $\lambda$. We do this in the simplest possible way given the situation : to every standard tableau $T\Vdash \lambda$, we associate the standard tableau $S(T,\mu)$ of shape $\mu$ obtained by adding to $T$ the unique box of $\mu$ that is not in $\lambda$, and filling it with the integer $n$. Then, we set
\[i_{\mu\lambda}(v_{T})=v_{S(T,\mu)}.\]
This map $i_{\mu\lambda}$ thus defined sends an orthonormal basis of $V^{\lambda}$ to an orthonormal family of $V^{\mu}$. Moreover, in view of \eqref{eq:defaction}, it commutes with the action of the Coxeter generators of $\S_{n-1}$, so that it is $\S_{n-1}$-equivariant. In other words, $i_{\mu\lambda}$ is an isometric intertwiner.

Let us describe the adjoint of $i_{\mu\lambda}$. Consider a standard tableau $S$ of shape $\mu$. Then, for every tableau $T$ of shape $\lambda$, 
\[\langle i_{\lambda\mu}(v_{S}),v_{T}\rangle_{V^{\lambda}}=\langle v_{S},i_{\mu\lambda}(v_{T})\rangle_{V^{\mu}}=\langle v_{S},v_{S(T,\mu)}\rangle_{V^{\mu}}=\delta_{S,S(T,\mu)}.\]
This scalar product is equal to $1$ if and only if in the tableau $S$, the integer $n$ is in the box $\mu/\lambda$, and $T$ is the tableau obtained from $S$ by removing this box. 

Let us define $T(S)$ as the tableau obtained by removing from $S$ the box containing $n$. This tableau $T(S)$ may or may not be of shape $\lambda$. Then 
\[i_{\lambda\mu}(v_{S})=\left\{\!\!\begin{array}{ll} v_{T(S)} &\text{if } T(S) \text{ is of shape } \lambda, \\ \! 0 &\text{otherwise.}
\end{array}\right.\]

\subsection{Computation of the scalar}
A good choice of isometric intertwiners being made, we can finally compute the scalar $a(v)$ that appeared in \eqref{eq:I5}.

\begin{proposition} \label{prop:determinea} Let $n\geq 2$ be an integer. Consider four partitions 
$\lambda \vdash n-2$, $\mu,\nu \vdash n-1$, and $\xi \vdash n$ such that $\lambda \leads \mu \leads \xi$ and $\lambda \leads \nu \leads \xi$. Set $\cont_{1}=\cont(\xi/\mu)$ and $\cont_{2}=\cont(\mu/\lambda)$.
Let $\theta$ be the unique element of $[0,\pi]$ such that $\cos \theta=(\cont_{1}-\cont_{2})^{-1}$.
Then the endomorphism $a_{\lambda,\mu,\nu,\xi}$ defined by \eqref{eq:defa} is equal to 
\[a_{\lambda,\mu,\nu,\xi}=\Big\{\! \begin{array}{ll} \hspace{-0.8pt}\cos \theta\  {\rm id}_{V^{\lambda}} &\text{if } \mu=\nu,\\
\sin \theta\  {\rm id}_{V^{\lambda}} & \text{if } \mu\neq\nu.
\end{array} \]
\end{proposition}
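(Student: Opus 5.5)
Since $a_{\lambda,\mu,\nu,\xi}$ is already known to be a scalar (Lemma \ref{lem:ascalar}), I will compute it by evaluating it on a single vector of $V^{\lambda}$, using the Gelfand--Zetlin bases of Proposition \ref{prop:GZbasis} and the intertwiners $i_{\mu\lambda}$, $i_{\lambda\nu}$, $i_{\nu\xi}$, $i_{\xi\mu}$ built from them.

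Fix any standard tableau $T\Vdash\lambda$ and apply the maps in turn. First, $i_{\mu\lambda}v_T=v_{S(T,\mu)}$, where the box $\mu/\lambda$ carries the entry $n-1$. Next, $i_{\xi\mu}$ sends this to $v_U$ with $U=S(S(T,\mu),\xi)$, in which the box $\xi/\mu$ carries $n$. In $U$ we have $\cont_U(n)=\cont(\xi/\mu)=\cont_1$ and $\cont_U(n-1)=\cont(\mu/\lambda)=\cont_2$, so $\theta_{U,n}=\theta$. Relation \eqref{eq:defaction} with $k=n$ then gives
\[(n-1\,n)v_U=\cos\theta\, v_U+\sin\theta\, v_{(n-1\,n)U}.\]

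Now apply $i_{\lambda\nu}i_{\nu\xi}$ using the explicit description of the adjoints: $i_{\nu\xi}$ removes the box containing $n$ and keeps the vector only if the resulting shape is $\nu$, and $i_{\lambda\nu}$ then removes the box containing $n-1$ and keeps it only if the shape is $\lambda$.
\begin{itemize}
\item For $v_U$, removing the $n$ box leaves shape $\mu$. This survives if and only if $\nu=\mu$, and then the next step returns $v_T$.
\item For $v_{(n-1\,n)U}$, which is nonzero only when $\sin\theta\neq0$, the entry $n$ sits in the box $\mu/\lambda$. Removing it gives the shape $\lambda\cup(\xi/\mu)=:\kappa$, which is the other partition between $\lambda$ and $\xi$. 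This survives if and only if $\nu=\kappa\neq\mu$. Removing $n-1$ then gives back exactly $T$, since the entries $1,\dots,n-2$ are untouched.
\end{itemize}
Hence $a\,v_T=\cos\theta\, v_T$ if $\mu=\nu$, and $a\,v_T=\sin\theta\, v_T$ if $\mu\neq\nu$.

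Two degenerate cases need checking. When $\xi/\lambda$ has adjacent boxes, we have $\nu=\mu$ forcibly, and $\sin\theta=0$ is consistent with $v_{(n-1\,n)U}=0$. When $\mu\neq\nu$, the two boxes are non-adjacent, so $(n-1\,n)U$ is genuinely standard and the previous computation applies.

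The main obstacle is the sign in the case $\mu\ne\nu$. It is resolved entirely by the coherent choice of bases in Proposition \ref{prop:GZbasis}, which fixes $\sin\theta\ge0$ as the coefficient, and by the fact that the intertwiners send basis vectors to basis vectors with no sign. So the real content is verifying carefully that the tableau bookkeeping for $i_{\lambda\nu}i_{\nu\xi}v_{(n-1\,n)U}$ lands on $+v_T$.
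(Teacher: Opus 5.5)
Your proposal is correct and follows essentially the same route as the paper. You evaluate $a_{\lambda,\mu,\nu,\xi}$ on a Gelfand--Zetlin vector $v_T$, push it to $v_{U}$ with $U=S(S(T,\mu),\xi)$, apply \eqref{eq:defaction} with $k=n$, and use the explicit description of the adjoint intertwiners to see which term survives according to whether $\nu=\mu$ or $\nu\neq\mu$; your bookkeeping ($i_{\nu\xi}$ kills $v_U$ and sends $v_{(n-1\,n)U}$ to $v_{S(T,\nu)}$ when $\nu\neq\mu$) matches the paper's argument and states that step more accurately than the paper's text, which has a typo there.
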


\begin{proof} For the sake of clarity, let us set $a=a_{\lambda,\mu,\nu,\xi}$. Let us choose a tableau $T$ of shape $\lambda$ and compute the action of $a$ on the vector $v_{T}$ of $V^{\lambda}$. 

Let us define the tableaux $T_{n-1}=S(T,\mu)$ of shape $\mu$ and $T_{n}=S(T_{n-1},\xi)$ of shape $\xi$. The tableau $T_{n}$ contains $n$ in the box $\xi/\mu$ and $n-1$ in the box $\mu/\lambda$, so that $\cont_{T_{n}}(n)=\cont_{1}$ and $\cont_{T_{n}}(n-1)=\cont_{2}$. Therefore, $\theta_{T_{n},n}=\theta$. 

Our definition of the isometric intertwiners gives $i_{\mu\lambda} v_{T}=v_{T_{n-1}}$, then $i_{\xi\mu}i_{\mu\lambda} v_{T}=v_{T_{n}}$, and
\[(n-1\, n)\, i_{\xi\mu}i_{\mu\lambda} v_{T}=\cos \theta\, v_{T_{n}}+\sin \theta \, v_{(n-1\, n)T_{n}}.\]

Let us treat first the case where $\nu=\mu$. In this case, we claim that $i_{\nu\xi} v_{(n-1\, n)T_{n}}=0$. Indeed, if the tableau $(n-1\, n)T_{n}$ is not standard, then $v_{(n-1\, n)T_{n}}=0$ and the claim is true. If the tableau $(n-1\, n)T_{n}$ is standard, then the box $\xi/\nu=\xi/\mu$ of this tableau contains the integer~$n-1$, so that erasing it does not produce a standard tableau of shape $\nu$, and $i_{\nu\xi} v_{(n-1\, n)T_{n}}=0$. 

Therefore,
\[a (v_{T})= \cos \theta \, i_{\lambda\nu}i_{\nu\xi} v_{T_{n}}=\cos \theta \, i_{\lambda\nu}v_{T_{n-1}}=\cos \theta \, v_{T}.\]

Let us now treat the case where $\nu\neq \mu$. In this case, the boxes $\xi/\nu$ and $\xi/\mu$ are distinct, and each of them can be removed from $\nu$ to produce a new Young diagram. Therefore, these boxes are not neighbours in the same row or column, and the tableau $(n-1\, n)T_{n}$ is standard.

On the one hand, the tableau $T_{n}$ contains $n$ in the box $\xi/\mu$ and $n-1$ in the box $\mu/\lambda=\xi/\nu$, so that $i_{\nu\xi}v_{T_{n}}=0$. On the other hand, the tableau $(n-1\, n)T_{n}$ contains $n$ in the box $\mu/\lambda=\xi/\nu$, and erasing this box produces the tableau $S(T,\nu)$, so that $i_{\nu\xi}v_{T_{n}}=v_{S(T,\nu)}$. Therefore,
\[a(v_{T})= \sin \theta \, i_{\lambda\nu}i_{\nu\xi} v_{(n-1\, n)T_{n}}= \sin \theta \, i_{\lambda\nu}v_{S(T,\nu)}=\sin \theta \, v_{T},\]
and the proof is complete.
\end{proof}

\subsection{Completion of the proof of the main theorem}
The proof of Theorem \ref{thm:main} is now essentially finished. Let us state the final expression of the flat contribution of a configuration of highest weights, which holds without the assumption of non-negativity. 

Recall the definition of the angle $\theta_{v}^{\Lambda}$ by \eqref{eq:defthetavL}, as well as the definition of vertices of type~$1$ and of type~$2$ given by \eqref{eq:deftype}.

\begin{proposition} \label{prop:eqI6} The flat contribution of a balanced configuration of highest weights $\Lambda$ is 
\begin{equation}\label{eq:I6}
\mathscr F(\Lambda)=\prod_{F\in \sF} (d_{\lambda_{F}})^{\e_{F}}
\prod_{v\in \sV} (d_{\lambda_{\cn(v)}}d_{\lambda_{\cs(v)}})^{-\frac{1}{2}}
\prod_{v \in \sV^{\Lambda}_{1}} \cos \theta^{\Lambda}_{v}\prod_{v \in \sV^{\Lambda}_{2}} \sin \theta^{\Lambda}_{v}.
\end{equation}
\end{proposition}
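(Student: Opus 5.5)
First I will treat a balanced configuration $\Lambda$ that is also non-negative. Proposition \ref{prop:expI5} gives $\mathscr F(\Lambda)$ as the product of $\prod_F (d_{\lambda_F})^{\e_F}$, the factor $\prod_v (d_{\lambda_{\cn_v}}d_{\lambda_{\cs_v}})^{-1/2}$ and $\prod_v a(v)$. So for each vertex $v$ it suffices to identify $a(v)=a_{\lambda_{\cw},\lambda_{\cn},\lambda_{\cs},\lambda_{\ce}}$ with $\cos\theta^\Lambda_v$ if $v$ is of type $1$, and with $\sin\theta^\Lambda_v$ if $v$ is of type $2$.

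I will apply Proposition \ref{prop:determinea} with $\lambda=\lambda_{\cw}$, $\mu=\lambda_{\cn}$, $\nu=\lambda_{\cs}$ and $\xi=\lambda_{\ce}$. Balance gives exactly the hypotheses $\lambda\leads\mu\leads\xi$ and $\lambda\leads\nu\leads\xi$. In that proposition $\cont_1=\cont(\lambda_{\ce}/\lambda_{\cn})$ and $\cont_2=\cont(\lambda_{\cn}/\lambda_{\cw})$. So the angle $\theta$ it produces is the angle $\theta^\Lambda_v$ defined in \eqref{eq:defthetavL}. The dichotomy $\mu=\nu$ versus $\mu\neq\nu$ is precisely the type $1$ versus type $2$ dichotomy of \eqref{eq:deftype}. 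One check remains: the product $\prod_v a(v)$ in \eqref{eq:I5} must have been computed with the particular isometric intertwiners built from the Gelfand--Zetlin bases of Proposition \ref{prop:GZbasis}. This is legitimate, because the paper remarks that $\prod_v a(v)$, unlike the individual $a(v)$, does not depend on the sign choices. Indeed, everything up to \eqref{eq:octo} is independent of them, since each intertwiner appears together with its adjoint. Substituting then gives \eqref{eq:I6} for non-negative balanced $\Lambda$.

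To remove the non-negativity assumption, I will take $q\in\Z$ large enough that $\Lambda+(q,\ldots,q)$ is non-negative. By Lemma \ref{lem:translate}, the flat contributions of $\Lambda$ and $\Lambda+(q,\ldots,q)$ are equal, so it remains to see that the right-hand side of \eqref{eq:I6} is unchanged by the shift. Dimensions are invariant, since \eqref{eq:Weyldim} depends only on the differences $\lambda_i-\lambda_j$. Balance and vertex types are clearly preserved. Contents all increase by $q$, since $\cont(\mu/\lambda)=\mu_i-i$ becomes $\mu_i+q-i$. Therefore the denominator in \eqref{eq:defthetavL}, a difference of two contents, is unchanged, and so is each angle $\theta^\Lambda_v$.

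There is one more point, and it is the one I expect needs the most care: the identification of the partition-theoretic content $\cont_T(k)$ used in Section \ref{sec:detscal} with the highest-weight content \eqref{eq:defcont}. For non-negative weights viewed as partitions these agree by the explicit formula given after the definition of $\cont_T$. The orientation conventions also need checking: that $\cn_v$ sits between the outgoing edges is what forces $\lambda_{\cw}\leads\lambda_{\cn}\leads\lambda_{\ce}$, matching the order of composition in \eqref{eq:defa}, which is $i_{\xi\mu}i_{\mu\lambda}$ followed by $(n-1\,n)$. With these checked, the proof is complete. Combining \eqref{eq:I6} with Proposition \ref{prop:expanded} and Proposition \ref{prop:balance} then yields Theorem \ref{thm:main}.
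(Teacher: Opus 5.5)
Your proposal is correct and follows the paper's proof: in the non-negative case you combine Proposition \ref{prop:expI5} with Proposition \ref{prop:determinea}, applied to $(\lambda,\mu,\nu,\xi)=(\lambda_{\cw},\lambda_{\cn},\lambda_{\cs},\lambda_{\ce})$, and you remove non-negativity via Lemma \ref{lem:translate} together with the shift-invariance of dimensions and of differences of contents. Your extra check on the intertwiners is harmless but not strictly needed, since Proposition \ref{prop:expI5} was derived for an arbitrary choice of isometric intertwiners and therefore holds in particular for the Gelfand--Zetlin choice.
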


\begin{proof} If $\Lambda$ is non-negative, then $\mathscr F(\Lambda)$ is given by Proposition \ref{prop:expI5}. For every vertex $v$, the scalar~$a(v)$, defined by \eqref{eq:defa(v)}, is computed by Proposition \ref{prop:determinea}, in terms of an angle $\theta$ that is exactly the angle $\theta_{v}^{\Lambda}$. The distinction between vertices of type $1$ and of type $2$ is just the distinction which decides, in Proposition \ref{prop:determinea}, if $a(v)$ is the cosine or the sine of $\theta$.

If we do not assume anymore that $\Lambda$ is non-negative, we can choose an integer $q$ such that the configuration $\Lambda+(q,\ldots,q)$ is non-negative, with the notation of Lemma \ref{lem:translate}. This lemma tells us that the flat contribution of $\Lambda$ equals that of $\Lambda+(q,\ldots,q)$, that is, the right-hand side of \eqref{eq:I6} applied to $\Lambda+(q,\ldots,q)$.

It remains to observe that the right-hand side of \eqref{eq:I6}  is invariant under the addition of a constant vector $(q,\ldots,q)$ to each highest weight of the configuration. Indeed, firstly, it follows from~\eqref{eq:Weyldim} that for every highest weight $\lambda$ and every $q\in \Z$, we have $d_{\lambda+(q,\ldots,q)}=d_{\lambda}$. 
Secondly, adding $(q,\ldots,q)$ to each highest weight adds $q$ to $\cont(\lambda_{\ce_{v}}/ \lambda_{\cn_{v}})$ and to $\cont(\lambda_{\cn_{v}}/ \lambda_{\cw_{v}})$ for every vertex~$v$, and therefore does not change the angle $\theta^{\Lambda}_{v}$.
\end{proof}

Theorem \ref{thm:main} now follows from the combination of
\begin{itemize}
\item Proposition \ref{prop:expanded}, which expresses the left-hand sides of \eqref{eq:main} and \eqref{eq:main2} as sums over configurations of highest weights, the contribution of each configuration being an exponential prefactor times its flat contribution;
\item Proposition \ref{prop:balance}, which states that the flat contribution of a configuration of highest weights that is not balanced vanishes, and therefore restricts the sum to the set $\B$ of balanced configurations;
\item Proposition \ref{prop:eqI6} which computes the flat contribution of an arbitrary balanced configuration of highest weights. 
\end{itemize}

\section{Afterthoughts}\label{sec:after}

\subsection{Rationality of flat contributions} By its very definition \eqref{eq:defthetavL}, the angle $\theta^{\Lambda}_{v}$ has a rational cosine. Its sine, in contrast, is either zero or irrational, and it can be zero only if the vertex $v$ is of type $1$. Therefore, the expression \eqref{eq:I6} of the flat contribution of any configuration of highest weights with at least one vertex of type $2$ is a product that involves irrational numbers. We will now prove that it is nevertheless a rational number.  

\begin{proposition} \label{prop:rational} The flat contribution of any balanced configuration of highest weights is a rational number. 
\end{proposition}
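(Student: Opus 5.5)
The plan is to avoid the square roots and sines altogether. I will go back to the intermediate formula \eqref{eq:I5avecLE}, which was derived before the square roots were introduced in Proposition~\ref{prop:expI5}, and show that the derivation of Section~\ref{sec:summingpermutations} works with non-isometric intertwiners. With a suitably rational choice of intertwiners, every factor in \eqref{eq:I5avecLE} is then visibly rational.

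First, by Lemma~\ref{lem:translate} it suffices to treat a balanced non-negative configuration $\Lambda$. In \eqref{eq:I5avecLE} the factors $(d_{\lambda_F})^{\e_F}$, $1/d_{\lambda_{\rf_e}}$ and $d_{\lambda_{\ce_v}}$ are rational by the Weyl formula \eqref{eq:Weyldim}. It therefore remains to show that $\prod_{v} a(v)$ is rational for a good choice of intertwiners.

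Next I will check which properties of the $i_{\mu\lambda}$ and $i_{\lambda\mu}$ were actually used between \eqref{eq:asommersigma} and \eqref{eq:I5avecLE}. They are:
\begin{itemize}
\item $\S_{n-1}$-equivariance;
\item $i_{\lambda\mu}i_{\mu\lambda}={\rm id}_{V^\lambda}$;
\item $i_{\mu\lambda}i_{\lambda\mu}=\rho_\mu(\pi^\lambda)$, which gives the identities \eqref{eq:ident} and hence \eqref{eq:octo}.
\end{itemize}
Isometry was never needed. Each intertwiner appears in \eqref{eq:octo} together with its partner, so the trace \eqref{eq:asommersigma} is unchanged. The edge summation \eqref{eq:symsum} takes place inside $V^{\lambda_{\rf_e}}$ and does not involve the intertwiners. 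Consequently \eqref{eq:I5avecLE} holds for any family of pairs $(i_{\mu\lambda},i_{\lambda\mu})$ with these three properties, with $a(v)$ still defined by \eqref{eq:defa}. Lemma~\ref{lem:ascalar} still shows that $a(v)$ is a scalar.

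Then I choose such pairs rationally, using Young's seminormal form. Starting from the orthonormal basis of Proposition~\ref{prop:GZbasis}, I rescale to vectors $w_T=c_Tv_T$ with $c_T>0$, in which every transposition $(k-1\,k)$ acts by
\[
(k-1\,k)\,w_T=\cos(\theta_{T,k})\,w_T+\beta_{T,k}\,w_{(k-1\,k)T},
\]
where $\beta_{T,k}$ is either $1$ or $\sin^2\theta_{T,k}=1-\cos^2\theta_{T,k}$, according to a fixed ordering of the pair $T$, $(k-1\,k)T$ (for instance which of $k-1$, $k$ lies in a higher row). The matrix entries are then rational. The existence of the $c_T$ is the standard statement that seminormal and orthogonal forms differ by a diagonal change of basis; I would verify it by defining $c_T$ along chains of simple transpositions from a base tableau and checking consistency.

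I then define $i_{\mu\lambda}(w_T)=w_{S(T,\mu)}$, and set $i_{\lambda\mu}(w_S)=w_{T(S)}$ if $T(S)$ has shape $\lambda$, and $0$ otherwise. The three properties follow exactly as in the isometric case, since the formulas defining the action are local in this sense. Repeating the proof of Proposition~\ref{prop:determinea} verbatim then gives $a(v)=\cos\theta^\Lambda_v\in\mathbb Q$ at vertices of type~$1$, and $a(v)=\beta\in\{1,\sin^2\theta^\Lambda_v\}\subset\mathbb Q$ at vertices of type~$2$. This proves rationality.

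The main obstacle I expect is the rescaling step: checking that a consistent choice of $c_T$ exists so that every $(k-1\,k)$ has the stated rational matrix, and that the resulting intertwiners are equivariant. If this becomes messy, the fallback is to build the seminormal representation directly as in the proof of Proposition~\ref{prop:GZbasis}, verifying the Coxeter relations and the Jucys--Murphy eigenvalues and then invoking Proposition~\ref{prop:caraclambda}. Taking $W$ to be this seminormal module then yields the required intertwiners without any reference to the orthonormal basis.
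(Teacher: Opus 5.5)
Your proof is correct, and it takes a genuinely different route from the paper's.

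\textbf{The paper's route.} It works directly on the final formula \eqref{eq:I6}. Each edge is labelled by $(i,\mu_i)$, and edges with a given label form disjoint simple loops (level lines). Type-$2$ vertices are exactly the transverse crossings of lines with labels $(i,a)$ and $(j,b)$, $i\neq j$, and the sine there depends only on the two labels. Following a line $(i,a)$ and watching the $j$-th component of the weight on its right shows that it crosses the union of the $(j,b)$-lines an even number of times. So each irrational sine occurs to an even power.

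\textbf{Your route.} You instead reopen Section~\ref{sec:summingpermutations} and use the freedom in normalising the intertwiners. Your audit of that section is right. The identity \eqref{eq:symsum} is just Schur orthogonality, $\frac{1}{n!}\sum_{\sigma}\rho(\sigma)A\rho(\sigma^{-1})=\frac{\Tr A}{d}\,\mathrm{id}$ for any endomorphism $A$ of an irreducible module of dimension $d$, and needs no inner product. The loops closed after the summation collapse exactly through $i_{\lambda\mu}i_{\mu\lambda}=\mathrm{id}$. Your third property in fact follows from the other two by multiplicity-freeness.

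The rescaling you flag as the main obstacle is easy. Take $c_T=\prod\sqrt{1-(\cont_T(b)-\cont_T(a))^{-2}}$, the product running over pairs of entries $a<b$ of $T$ with $b$ in a higher row than $a$. Applying $(k-1\,k)$ to $T$ changes only the factor of the pair $\{k-1,k\}$, which gives exactly the ratios you need.

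\textbf{What each buys.} Your route gives an explicit square-root-free formula. Each type-$2$ vertex contributes $1$ or $\sin^{2}\theta^{\Lambda}_{v}$ according to the sign of $\cos\theta^{\Lambda}_{v}$. Your row rule is equivalent to this sign, because the lower of the two boxes is also the one further left. It also makes the rationality of $\prod_{v}(d_{\lambda_{\cn_v}}d_{\lambda_{\cs_v}})^{-1/2}$ visible at once, a point the paper leaves implicit. The price is having to re-check the derivation. The paper's route uses nothing but the final formula, and it gives structural information: level lines, the pairing of crossings, and the homological picture behind Proposition~\ref{prop:homcond}.

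The two are consistent. The parity argument shows that exactly half of the crossings between lines with two given labels have $\cos\theta^{\Lambda}_{v}>0$.
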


The argument of the following proposition was suggested to me by Antoine Dahlqvist, to whom I express my friendly gratitude.

\begin{proof} Consider a balanced configuration $\Lambda$. For each edge $e\in \sE$, the highest weights $\lambda=\lambda_{\lf_{e}}$ and  $\mu=\lambda_{\rf_{e}}$ differ by one component only, of index $i$, for which $\mu_{i}=\lambda_{i}+1$. Let us give to each edge $e$ a {\em label} with the couple of integers $(i,\mu_{i})$. 

At any vertex, the two incoming edges have distinct labels, and each incoming edge has the same label as one of the two outgoing edges (this is not in contradiction with the fact that an edge can be at the same time incoming and outgoing at a vertex). The possible situations are shown in Figure \ref{fig:couleurs} below.

Let us focus on the set of all edges with a particular label. It follows from what we just said that at each vertex, there is either one incoming edge and one outgoing edge with this particular label, or none at all. 

Therefore, the set of all edges with a given label forms a collection of pairwise disjoint oriented simple loops in $\sG$. Let us call these loops, which depend on $\Lambda$, the {\em level lines} of the configuration. Level lines are labelled by a couple of integers $(i,a)$ with $i\in \{1,\ldots,N\}$ and $a\in \Z$, they are simple loops, and two distinct level lines with the same label are disjoint. 

Looking more closely at the possible situations at a vertex (see Figure \ref{fig:couleurs}), one sees that vertices of type $2$ are exactly the points where two level lines cross transversally. Moreover, the labels $(i,a)$ and $(j,b)$ of two level lines crossing transversally must satisfy $i\neq j$.

\begin{figure}[h!]
\begin{center}
\includegraphics{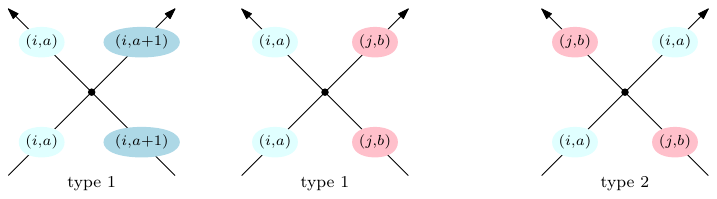}
\caption{\label{fig:couleurs} \small The three possible situations regarding the labels of the four strands adjacent to a vertex. The integers $i$ and $j$ are distinct.
}
\end{center}
\end{figure}

A vertex where two level lines of respective labels $(i,a)$ and $(j,b)$ cross transversally contributes to \eqref{eq:I6} with a factor 
\begin{equation}\label{eq:racine}
\frac{\sqrt{(a-b+j-i)^{2}-1}}{|a-b+j-i|},
\end{equation}
that we write only in order to show that it is completely determined by the labels of the level lines crossing at this vertex. 

The fact that \eqref{eq:I6} is rational will follow from the following assertion: given any two distinct labels $(i,a)$ and $(j,b)$, any level line labelled $(i,a)$ has an even number of transversal crossings with the union of all level lines labelled $(j,b)$.

This assertion implies that the total number of transversal crossings of a level line labelled~$(i,a)$ with a level line labelled $(j,b)$ is even, so that the factor \eqref{eq:racine} appears with an even power in~\eqref{eq:I6}, which is therefore rational. 

Let us now prove the assertion. Let us consider two distinct labels $(i,a)$ and $(j,b)$, and let us consider a level line labelled $(i,a)$. If $j=i$, then this level line never crosses transversally a level line labelled $(j,b)$. We may therefore assume that $j\neq i$. 

Let us travel once along the chosen level line labelled $(i,a)$, while monitoring the value of the $j$-th component of the highest weight associated to the face located immediately to the right of the level line. At each vertex that the level line crosses, this value changes by $-1$, $0$ or $1$. 

Now, on the one hand, the vertices where this value changes from $b-1$ to $b$ or from $b$ to $b-1$ are exactly the transversal crossings with a level line labelled $(j,b)$. On the other hand, this value comes back to its initial value once the travel is complete, and must therefore go as many times from $b-1$ to $b$ as it goes from $b$ to $b-1$. This implies that the chosen level line crosses transversally an even number of times the union of all level lines labelled $(j,b)$.
\end{proof}

The fact that a level line labelled $(i,a)$ has an even number of transversal crossings with the union of all level lines labelled $(j,b)$ can also be explained from a topological point of view. Indeed, the sum of all edges labelled $(j,b)$ determines a $1$-chain on $\Sigma$, and this chain is a cycle. The point is that this cycle is homologically trivial, because it is the boundary of the sum of the faces $F$ for which $(\lambda_{F})_{j}\leq b-1$, each face being seen as a $2$-chain. Therefore, any other cycle, for example the cycle determined by a level line labelled $(i,a)$, has an intersection number of $0$ with it. Since this intersection number can be counted as a sum over transverse crossings of a sign, there must be an even number of such crossings.

Making these statements precise would involve the relative homology of the pairs $(\sG,\sV)$ and $(\Sigma,\sG)$, but as we already gave a complete argument, it does not seem necessary to go into this. Let us nevertheless state the following consequence of the triviality of the cycle formed by adding all edges with a given label.

\begin{proposition} \label{prop:homcond} If the homology class of the loop configuration $\ul\ell$
in $H_{1}(\Sigma)$ does not belong to the subgroup generated by the classes of the constrained boundary components, then the Wilson loop expectation \eqref{eq:WLE} vanishes.
\end{proposition}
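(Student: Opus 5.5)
By Theorem~\ref{thm:main}, the Wilson loop expectation \eqref{eq:WLE} is a sum over $\B$ of terms involving positive quantities. So it suffices to show that under the hypothesis of the proposition, $\B$ is empty. Equivalently, we show that the existence of a balanced configuration vanishing at the free boundary forces the class $[\ul\ell]\in H_{1}(\Sigma)$ into the subgroup generated by the classes $[C_{1}],\ldots,[C_{\k}]$ of the constrained boundary components.

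Fix $\Lambda\in\B$ and choose $q$ so that $\Lambda+(q,\ldots,q)$ is non-negative; this changes neither balance nor the edge structure. The condition of vanishing at the free boundary is shifted accordingly: faces containing a free boundary component now carry the constant weight $(q,\ldots,q)$. Consider the integer-valued function on faces
\[
F\mapsto |\lambda_{F}| = \lambda_{F,1}+\cdots+\lambda_{F,N}.
\]
By balance, $|\lambda_{\rf_{e}}| = |\lambda_{\lf_{e}}|+1$ for every edge $e$. Now form the cellular $2$-chain
\[
c=\sum_{F\in\sF}|\lambda_{F}|\,[\bar F],
\]
where $[\bar F]$ is the fundamental chain of the compact surface $\bar F$, pushed forward to $\Sigma$ through the blow-up projection. (Equivalently, use the cellular refinement $\bG$ from the proof of Proposition~\ref{prop:expressiontocompute}, whose faces are disks and whose boundary loops $\partial F$ are explicit.)

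Compute $\partial c$. The boundary of $[\bar F]$ is the sum of its external boundary loops $\partial_{i}F$, which bound $F$ positively, plus its internal boundary components, which are boundary components of $\Sigma$ lying in $F$. Every edge $e$ of $\sG$ appears in $\partial[\bar{\lf_{e}}]$ with sign $+1$ and in $\partial[\bar{\rf_{e}}]$ with sign $-1$. Hence its coefficient in $\partial c$ is $|\lambda_{\lf_{e}}|-|\lambda_{\rf_{e}}|=-1$. The interior edges added in $\bG$ cancel, since each appears twice with opposite signs in the same face. Therefore
\[
\partial c = -[\ul\ell] + \sum_{F}|\lambda_{F}|\sum_{C\subset F}[C],
\]
where the inner sum runs over the components $C$ of $\partial\Sigma$ contained in $F$, oriented as boundary of $\Sigma$. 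Passing to homology, $[\ul\ell]$ equals an integer combination of classes of boundary components.

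It remains to dispose of the free components. This is the step I expect to require the most care: in $H_{1}(\Sigma)$ the sum of all boundary classes is zero, but individual free boundary classes need not lie in the span of the constrained ones. Here vanishing at the free boundary is used: every free component lies in a face with weight $(q,\ldots,q)$, so it contributes with coefficient $Nq$. To remove it, I would use the $2$-chain $c-Nq[\Sigma]$ instead of $c$. Its boundary has the same edge part, because $[\Sigma]$ has no interior boundary. The boundary term gets coefficient $|\lambda_{F}|-Nq$, which vanishes on free components. This gives
\[
[\ul\ell]\in\langle [C_{1}],\ldots,[C_{\k}]\rangle,
\]
which contradicts the hypothesis. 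Hence $\B=\emptyset$ and the expectation vanishes.

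If $\Sigma$ is closed there are no boundary terms, and one gets $[\ul\ell]=0$ directly. The orientation and sign conventions for $\partial[\bar F]$ versus $\lf_{e},\rf_{e}$ should be checked against the convention that $\partial_{i}F$ bounds $F$ positively, but any global sign is harmless.
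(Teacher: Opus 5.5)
Your proof is correct, but it takes a different route from the paper.

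The paper argues dually. Along any curve transverse to $\ul\ell$, the total size $|\lambda_F|=\lambda_{F,1}+\dots+\lambda_{F,N}$ of the face being crossed changes by $\pm1$ at each crossing. Hence $\ul\ell$ has zero algebraic intersection with every closed curve and with every arc joining free boundary components. Poincar\'e--Lefschetz duality then gives $[\ul\ell]=0$ in $H_1(\Sigma,C_1\cup\dots\cup C_{\k})$, and the conclusion follows from the long exact sequence.

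You use the same two inputs: the jump by $1$ across each edge, and the vanishing on faces containing a free boundary component. Instead of duality, you turn them into an explicit bounding $2$-chain in the cellular refinement $\bG$ from Step~1. This is essentially the aggregate, over all labels, of the paper's side remark that the edges with a given label $(j,b)$ bound the faces with $(\lambda_F)_j\le b-1$.

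Your version buys the following:
\begin{itemize}
\item It avoids the duality theorem and the implicit bookkeeping with relative cycles and universal coefficients.
\item It proves slightly more, namely the explicit identity $[\ul\ell]=\sum_{i=1}^{\k}|\lambda_{F_i}|\,[C_i]$ in $H_1(\Sigma)$, with $C_i$ oriented as part of $\partial\Sigma$. A quick check on an annulus confirms the signs.
\end{itemize}
The paper's version is shorter and more geometric, at the price of citing duality.

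Two small corrections. First, the shift by $(q,\ldots,q)$ is unnecessary. Cellular chains accept arbitrary integer coefficients, so without the shift the faces containing free components already carry coefficient $0$, and the correction by $Nq[\Sigma]$, which you flagged as the delicate step, disappears. Second, the terms of the sum over $\B$ are not all positive: cosines can be negative, and $s_{\lambda_{F_i}}(w_i)$ need not be real. You only use that an empty sum vanishes, so that phrase should simply be dropped.
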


When $\Sigma$ is closed, the necessary condition for the Wilson loop expectation not to be zero is simply that the total homology class of the loop configuration vanishes. 

\begin{proof} Let us assume that the Wilson loop expectation associated to the loop configuration $\ul\ell$ is not zero. Then on the graph determined by $\ul\ell$, there exists a balanced configuration of highest weights vanishing on the free boundary.

Along any curve drawn on $\Sigma$ which is transverse to $\ul\ell$ at each intersection point, the sum of the components of the highest weight of the face being currently traversed by the curve increases by~$1$ at each crossing of a strand of $\ul\ell$ coming from the right, and it decreases by~$1$ at each crossing of a strand of $\ul\ell$ coming from the left. Therefore, the loop configuration $\ul\ell$ has a total algebraic intersection number of $0$ with any closed curve on $\Sigma$, and with any curve that starts and finishes at a free boundary component. 

According to  \cite[Theorem 3.43]{Hatcher}, this implies that the class of $\ul{\ell}$ in the relative homology group $H_{1}(\Sigma,C_{1}\cup \ldots \cup C_{\k})$ is zero. This in turn implies that the homology class of $\ul{\ell}$ in $H_{1}(\Sigma)$ belongs to the subgroup generated by the classes of $C_{1},\ldots,C_{\k}$.
\end{proof}

\subsection{Cohomological aspects of the sign problem} In this section, we will explain how the problem of the determination of the sign of $a(v)$ when $v$ is a vertex of type $2$ can be reformulated in cohomological terms.

For this, let us construct a $2$-dimensional CW-complex $\Y$ that could be called the Young complex. The set of $0$-cells of $\Y$ is $\Part$, the set of all partitions. For each couple of partitions $\lambda$ and $\mu$ such that $\lambda\leads \mu$, let us attach a $1$-cell joining $\lambda$ to $\mu$. So far, $\Y$ is the well-known Young graph. Now, for all quadruple $(\lambda,\mu,\nu,\xi)$ of partitions with $\mu\neq \nu$ such that $\lambda\leads\mu\leads\xi$ and $\lambda\leads\nu\leads\xi$ let us attach a square with edges $\lambda\leads \mu$, $\mu\leads \xi$, $\lambda\leads \xi$ and $\nu\leads \xi$  to $\Y$. 

\index{Y@$\Y$, Young complex}

A non-negative balanced configuration of highest weights can be seen as a map from the dual graph of $\sG$, in a natural sense, to the complex $\Y$, but we will not need this point of view. 

\begin{figure}[h!]
\begin{center}
\includegraphics{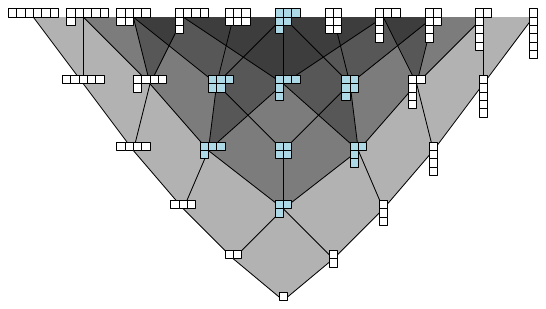}
\caption{\label{fig:youngcomplex} \small A small part of the complex $\Y$. The eight blue partitions are the vertices of a cube of which all faces belong to the complex.}
\end{center}
\end{figure}

Above each edge $\lambda\leads \mu$, let us consider the set $I_{\mu\lambda}$ of isometric intertwiners $V^{\lambda}\to V^{\mu}$. This set has two elements which differ by a sign. Thus, a choice $i=(i_{\mu\lambda})_{\lambda\uparrow\mu}$ of intertwiners being made, any other choice can be deduced from the first by a change of sign of some of the intertwiners. The choice of a sign on each edge can be represented by a $1$-cochain $\epsilon \in C^{1}(\Y,\Z/2\Z)$, and any choice of intertwiners is of the form $\epsilon \cdot i$. 

Let $i=(i_{\mu\lambda})_{\lambda\uparrow\mu}$ be a choice of isometric intertwiners. Let $(\lambda,\mu,\nu,\xi)$ be a $2$-cell of $\Y$. There is no orientation of the $2$-cell, so that $\mu$ and $\nu$ play perfectly symmetric roles. Therefore, we cannot define an angle in the same way as we did for instance in \eqref{eq:defthetavL}, but we can define $\theta\in [0,\frac{\pi}{2})$ by the relation
\[\sin^{2} \theta_{\lambda,\mu,\nu,\xi} = 1-\frac{1}{\big(\cont(\xi/\mu)-\cont(\mu/\lambda)\big)^{2}}= 1-\frac{1}{\big(\cont(\xi/\nu)-\cont(\nu/\lambda)\big)^{2}}.\]
Now, denoting by $n$ the weight of the partition $\xi$, that is, the sum of its parts, there exists $\sigma_{\lambda,\mu,\nu,\xi}\in \{\pm 1\}$, which depends on $i$, such that
\[i_{\lambda\nu}i_{\nu\xi}\, (n-1\, n)\,  i_{\xi\mu}i_{\mu\lambda}=\sigma_{\lambda,\mu,\nu,\xi}\, \sin \theta_{\lambda,\mu,\nu,\xi}.\]
The object $\sigma$ thus defined belongs to $C^{2}(\Y,\Z/2\Z)$. Let us denote it by $\sigma(i)$ to indicate its dependence on $i$. It is readily checked that for all $\epsilon \in C^{1}(\Y,\Z/2\Z)$,
\begin{equation}\label{eq:d1}
\sigma(\epsilon \cdot i)=\sigma(i)+d\epsilon.
\end{equation}

Since our complex is $2$-dimensional, the $2$-chain $\sigma$ is closed, and the last equation shows that its class in $H^{2}(\Y,\Z/2\Z)$ does not depend on the choice of $i$. It is canonically defined in the present situation, and computing the sign of $a(v)$ amounts to computing this cohomology class.

Indeed, Propositions \ref{prop:GZbasis} and \ref{prop:determinea} show that there exists a choice of $i$ for which $\sigma(i)$ vanishes identically. In other words, they solve the problem of the determination of the sign of the scalar~$a(v)$ by showing that the canonical class that we just defined is zero --- it hardly gets any more canonical. 

Let us say a few more words about the way in which these propositions solve the problem. Proposition \ref{prop:GZbasis} singles out two bases of each module $V^{\lambda}$.
Just as $C^{1}(\Y,\Z/2\Z)$ acts freely and transitively on the set of possible choices of isometric intertwiners, the space $C^{0}(\Y,\Z/2\Z)$ acts freely and transitively on the set of possible choices of one of the two bases in each module~$V^{\lambda}$. A choice of bases $b=(b_{\lambda})_{\lambda}$ being made, any other choice is of the form $\alpha \cdot b$ for some $\alpha\in C^{0}(\Y,\Z/2\Z)$.

Now, each choice $b$ of bases determines a choice $i(b)$ of isometric intertwiners, in the way that we explained between Proposition \ref{prop:GZbasis} and Proposition \ref{prop:determinea}. And we have, for every $\alpha\in C^{0}(\Y,\Z/2\Z)$, the relation
\begin{equation}\label{eq:d2}
i(\alpha \cdot b)=(d\alpha)\cdot i(b),
\end{equation}
from which it follows, together with \eqref{eq:d1}, that $\sigma(i(\alpha\cdot b))=\sigma(i)$.

This last equality explains why the choice of bases did not play any role in the last step of our determination of the scalar $a(v)$.

\subsection{The Makeenko--Migdal equations} In this section, we will show, without getting bogged down in unnecessary technicalities, how our result allows us to recover the Makeenko--Migdal equations, which express the variation of the Wilson loop expectations as one varies the areas of the faces of the graph formed by the loop configuration without changing the total area of the surface. 

To be more precise, the Makeenko--Migdal equations describe the variation of a Wilson loop expectation when, around a vertex $v$, one increases the areas of the north and south faces, and decreases the areas of the east and west faces. The result is another Wilson loop expectation, of a loop configuration that does not exactly fit in the setting that we chose, because in this new configuration, two loops have a corner at $v$, and a contact point that is not a crossing. The equation can nevertheless be drawn suggestively as follows:
\begin{equation}\label{eq:MM}
\bigg(\frac{d}{d|\cn|}-\frac{d}{d|\cw|}+\frac{d}{d|\cs|}-\frac{d}{d|\ce|}\bigg)\, {\bf E}\Bigg[ \Tr\bigg(\ \raisebox{-5mm}{\includegraphics{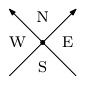}}\ \bigg) \ldots\Bigg]=\frac{1}{N}\, {\bf E}\Bigg[ \Tr\bigg(\ \raisebox{-5mm}{\includegraphics{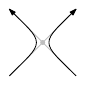}}\ \bigg) \ldots\Bigg],
\end{equation}
where we accept the loop configuration drawn on the right as a legitimate one, and call it the loop configuration {\em desingularized at $v$}. Note that this loop configuration has one more loop, or one less, than $\ul\ell$, depending on whether the two strands crossing at $v$ belong to the same loop or to two different loops of $\ul\ell$. Note also that, since we compute expectations of product of traces, and not of normalised traces, the factor $\frac{1}{N}$ is present in all cases.

These equations, discovered by Makeenko and Migdal \cite{MakeenkoMigdal}, were established mathematically on the plane by the author \cite{LevyMF}, then by Driver, Hall and Kemp \cite{DHK}, and on a surface by these authors and Gabriel \cite{DGHK}. They were also investigated in \cite{Hall}, \cite{DriverMM} and \cite{DahlqvistNorris}.

\begin{proposition}\label{prop:MM}
The Wilson loop expectations \eqref{eq:WLE} satisfy the Makeenko--Migdal equations~\eqref{eq:MM}.
\end{proposition}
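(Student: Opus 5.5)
We will prove Proposition~\ref{prop:MM} by differentiating the right-hand side of \eqref{eq:main} (or \eqref{eq:main2}) term by term with respect to the areas of the four faces around $v$, and then recognising the result as the same formula applied to the loop configuration desingularized at $v$.

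\textbf{Reduction to the non-normalised expectations.} The operator $\frac{d}{d|\cn|}-\frac{d}{d|\cw|}+\frac{d}{d|\cs|}-\frac{d}{d|\ce|}$ preserves the total area $|\Sigma|$. By \eqref{eq:defZ}, \eqref{eq:Z=1} and \eqref{eq:defZwb}, the partition function depends only on $|\Sigma|$, its topology and the boundary conditions. It is therefore constant along this variation, so it suffices to prove \eqref{eq:MM} with both sides multiplied by $Z(\Sigma)$ or $Z(\Sigma,\ul{w})$.

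Termwise differentiation of the sum over $\B$ is legitimate. The factors $e^{-\frac{1}{2N}|F|c_{\lambda}}$ decay like Gaussians in $\lambda$ when all areas stay bounded below. The remaining factors grow at most polynomially: dimensions are polynomial in $\lambda$, and cosines and sines are bounded by $1$. The differentiated series therefore converges uniformly, in the same way as \eqref{eq:FourierHK}.

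\textbf{The Casimir computation.} Differentiating the term indexed by $\Lambda$ multiplies it by
\[-\tfrac{1}{2N}\big(c_{\lambda_{\cn}}-c_{\lambda_{\cw}}+c_{\lambda_{\cs}}-c_{\lambda_{\ce}}\big).\]
The key identity comes directly from \eqref{eq:casimir}: if $\lambda\leads\mu$ by adding $1$ to the $i$-th component, then
\[c_{\mu}-c_{\lambda}=2\lambda_{i}+1+N-2i+1=2\cont(\mu/\lambda)+N.\]
Set $\cont_{1}=\cont(\lambda_{\ce}/\lambda_{\cn})$ and $\cont_{2}=\cont(\lambda_{\cn}/\lambda_{\cw})$.
\begin{itemize}
\item If $v$ is of type $2$, the box added from $\cw$ to $\cs$ is the one added from $\cn$ to $\ce$, and the box added from $\cs$ to $\ce$ is the one added from $\cw$ to $\cn$. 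The combination is then $(2\cont_{2}+N)-(2\cont_{2}+N)=0$, so these terms disappear.
\item If $v$ is of type $1$, then $\lambda_{\cn}=\lambda_{\cs}$ and the combination equals $2(\cont_{2}-\cont_{1})$. By \eqref{eq:defthetavL}, the prefactor becomes $\frac{1}{N}(\cont_{1}-\cont_{2})=\frac{1}{N\cos\theta^{\Lambda}_{v}}$. This exactly cancels the factor $\cos\theta^{\Lambda}_{v}$ of the vertex $v$.
\end{itemize}
So the left-hand side equals $\frac1N$ times the sum over those $\Lambda\in\B$ with $\lambda_{\cn_v}=\lambda_{\cs_v}$ of the usual summand. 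In that summand, the factors $\cos\theta^{\Lambda}_{v}$ and $(d_{\lambda_{\cn}}d_{\lambda_{\cs}})^{-1/2}$ are replaced by $1$ and $d_{\lambda_{\cn}}^{-1}$ respectively.

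\textbf{Identification with the desingularized configuration.} In the desingularized configuration $\ul\ell'$, the vertex $v$ disappears, the faces $\cw$ and $\ce$ are unchanged, and $\cn$ and $\cs$ are joined through a narrow band at $v$ into one face $F'$ with $|F'|=|\cn|+|\cs|$. We will argue that the theorem, or a direct rerun of its proof, applies to $\ul\ell'$: the contact point is not a double point and, after a small smoothing, the corners are harmless. The correspondence with the type-$1$ sum above is then checked factor by factor.
\begin{itemize}
\item \emph{Balanced configurations.} Balanced configurations on $\ul\ell'$ vanishing at the free boundary correspond bijectively to configurations in $\B$ with $\lambda_{\cn}=\lambda_{\cs}$. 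Indeed, the edges around $v$ still impose $\lambda_{\cw}\leads\lambda_{F'}\leads\lambda_{\ce}$.
\item \emph{Exponential factors.} These agree, since $|F'|=|\cn|+|\cs|$ and the two faces carry the same weight.
\item \emph{Euler characteristics.} Attaching a band lowers the Euler characteristic by one, whether $\cn$ and $\cs$ are distinct faces or the same face. Thus $d_{\lambda_{F'}}^{\e_{F'}}=d_{\lambda_{\cn}}^{\e_{\cn}+\e_{\cs}-1}$, which matches the factor above (with the convention that $\e_{\cn}+\e_{\cs}$ is replaced by $\e_{\cn}$ when $\cn=\cs$).
\item \emph{Remaining factors.} All other vertex factors and all Schur factors at the boundary are untouched.
\end{itemize}

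\textbf{Main obstacle.} The main obstacle is not the algebra but justifying that the right-hand side of \eqref{eq:MM}, for a configuration with a tangency rather than a crossing, is given by the same combinatorial formula. The Driver--Sengupta derivation of Sections~\ref{sec:DS}--\ref{sec:detscal} only uses the graph structure, so I would redo it for the graph of $\ul\ell'$, in which $v$ is simply absent and the two corners lie on ordinary edges. I would also double-check the Euler characteristic count in the case $\cn=\cs$.
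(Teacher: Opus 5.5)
Your proof is correct and follows the paper's argument essentially step by step. Both use the identity $c_\mu-c_\lambda=2\cont(\mu/\lambda)+N$, the vanishing of the type-$2$ terms, the factor $\frac{1}{N\cos\theta^{\Lambda}_v}$ that cancels the cosine on type-$1$ terms, and the identification with the desingularized configuration in which $\cn$ and $\cs$ merge into a face of Euler characteristic $\e_{\cn}+\e_{\cs}-1$. The paper likewise simply accepts the desingularized configuration as legitimate, so the obstacle you flag is handled no more rigorously there; your remarks on termwise differentiation and on the case $\cn=\cs$ are slightly more careful than the paper's text.
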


\begin{proof} Let us consider a loop configuration $\ul\ell$ and compute the left-hand side of \eqref{eq:MM} at a vertex~$v$ of the graph induced by $\ul\ell$.

A first observation is that the expressions \eqref{eq:main} and \eqref{eq:main2} make it very easy to differentiate a Wilson loop expectation with respect to the area of certain faces, as long as the total area of $\Sigma$, and therefore the normalisation constant $Z(\Sigma)$ or $Z(\Sigma,\ul{w})$, stays constant.

Differentiating with respect to the area of a face $F$ multiplies the contribution of each configuration of highest weights by a factor $-\frac{c_{\lambda_{F}}}{2N}$, where $c$ denotes the quadratic Casimir number defined by \eqref{eq:casimir}. Therefore, the combination of derivatives that appears in the Makeenko--Migdal equations applied at the vertex $v$ gives rise to a factor
\[\frac{1}{2N}\big((c_{\ce}-c_{\cn})-(c_{\cs}-c_{\cw})\big),\]
where $\cn$, $\cw$, $\cs$ and $\ce$ are as usual the faces surrounding the vertex $v$. 

A second observation is the following relation, easily derived from \eqref{eq:casimir}: if $\lambda$ and $\mu$ are highest weights such that $\lambda\leads \mu$, and if $i\in \{1,\ldots,N\}$ is the index such that $\mu_{i}=\lambda_{i}+1$, then
\[c_{\mu}-c_{\lambda}=2(\mu_{i}-i)+N=2\cont(\mu/\lambda)+N,\]
where the last $\cont$ is the content, defined by \eqref{eq:defcont}.

Therefore, the left-hand side of \eqref{eq:MM} is equal to the Wilson loop expectation of the original loop configuration $\ul\ell$, in which the contribution of each configuration of highest weights $\Lambda$ is multiplied by
\[\tfrac{1}{N} \big(\cont(\lambda_{\ce}/\lambda_{\cn})-\cont(\lambda_{\cs}/\lambda_{\cw})\big).\]
If the vertex $v$ is of type $2$ for the configuration $\Lambda$, then the skew-diagrams $\lambda_{\ce}/\lambda_{\cn}$ and $\lambda_{\cs}/\lambda_{\cw}$ are the same one box, and this factor is $0$. If $v$ is of type $1$ however, then this factor is equal to
\[\tfrac{1}{N} \big(\cont(\lambda_{\ce}/\lambda_{\cn})-\cont(\lambda_{\cn}/\lambda_{\cw})\big)=\frac{1}{N\cos \theta^{\Lambda}_{v}}.\]

This proves that the left-hand side of \eqref{eq:MM} is equal to $\frac{1}{N}$ times the right-hand side of \eqref{eq:main} or~\eqref{eq:main2}, where
\begin{enumerate}[\indent\sbullet]
\item the sum is restricted to the configurations of highest weights for which $v$ is of type $1$,
\item the cosine of the angle attached to $v$ is removed. 
\end{enumerate}

Let us compare this with the right-hand side of \eqref{eq:MM}. A configuration of highest weights on the graph associated to $\ul\ell$ for which $v$ is of type $1$ is the same thing as a configuration of highest weights on the graph associated to the loop configuration $\ul\ell$ desingularized at $v$, in which the north and south faces have merged. Apart from factors that are literally identical on both sides, such a configuration comes 
\begin{enumerate}[\indent\sbullet]
\item on the left-hand side of \eqref{eq:MM} with the coefficient 
\[e^{-\frac{1}{2N} (|\cn|c_{\lambda_{\cn}}+|\cs|c_{\lambda_{\cs}})}(d_{\lambda_{\cn}})^{\e_{\cn}}(d_{\lambda_{\cs}})^{\e_{\cs}} (d_{\lambda_{\cn}}d_{\lambda_{\cs}})^{-\frac{1}{2}},\]
\item on the right-hand side of \eqref{eq:MM} with the coefficient 
\[e^{-\frac{1}{2N} |F|c_{\lambda_{F}}}(d_{\lambda_{F}})^{\e_{F}},\]
where $F$ is the face resulting from the fusion of the north and south faces. 
\end{enumerate}
Since $\lambda_{N}=\lambda_{S}=\lambda_{F}$, and the area of $F$ is the sum of the areas of $\cn$ and $\cs$, the exponential terms are equal. 

The only point left to verify is that $\e_{F}=\e_{N}+\e_{S}-1$. This can be proved by saying that the compact surface $\bar F$ is obtained by merging a boundary component of $\cn$ with one of $\cs$, as illustrated by Figure \ref{fig:3holed}.
\begin{figure}[h!]
\begin{center}
\scalebox{1}{\includegraphics{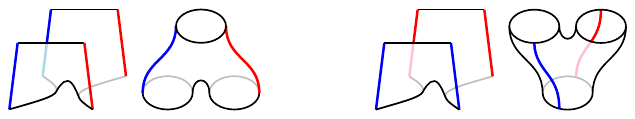}}
\caption{\label{fig:3holed} \small This figure illustrates the merging of a boundary component of the face~$\cn$ with a boundary component of the face $\cs$, in the case where they are distinct (on the left), and in the case where they are equal (on the right).}
\end{center}
\end{figure}

To be more precise, if $\cn$ and $\cs$ are distinct faces of $\sG$, then $\bar F$ is obtained, up to homeomorphism, by taking one boundary component of $\bar \cn$ and one of $\bar \cs$, and gluing them to two boundary components of a $3$-hold sphere. If $\cn$ and $\cs$ are the same face of $\sG$, then $\bar F$ can be obtained by gluing one boundary component of $\bar \cn$ on one boundary components of a $3$-hold sphere. In both cases, the result is a surface with Euler characteristic $\e_{\cn}+\e_{\cs}-1$.
\end{proof}



\newpage

\newcommand{\arXiv}[1]{\href{http://arxiv.org/abs/#1}{arXiv:#1}}
\bibliographystyle{alpha}
\bibliography{wlebib}
\bigskip




{\footnotesize
\printindex
}

\centerline{\rule{\textwidth}{0.3pt}}
\bigskip

\end{document}